\newtheorem{theorem}{Theorem}
\newtheorem{lemma}{Lemma}[section]
\newtheorem{proposition}[lemma]{Proposition}
\newtheorem{corollary}[lemma]{Corollary}
\newtheorem{definition}[lemma]{Definition}
\newtheorem{remark}[lemma]{Remark}
\newtheorem{problem}{{\bf{Problem}}}
\theoremstyle{definition}
\def\beq#1\eeq{\begin{equation}#1\end{equation}}
\def\balign #1 #2 \ealign{\begin{aligned} #1 #2  \end{aligned} }
\def\Div{{\rm div}}
\def\vol{{\rm vol}}
\def\sgn{{\rm sgn}}
\def\bu{\mathbf{u}}
\def\bI{\mathbf{I}}
\def\mE{\mathcal{E}}
\newcommand \alp{\alpha}
\newcommand \eps{\varepsilon}
\newcommand \vphi{\varphi}
\newcommand \Gam{\Gamma}
\newcommand \gam{\gamma}
\newcommand \tx{\text}
\newcommand \R{\mathbb{R}}
\newcommand \til{\tilde}
\newcommand \der{\partial}
\newcommand \mcl{\mathcal}
\newcommand \ol{\overline}
\newcommand \Om{\Omega}
\newcommand \N{\mcl{N}}
\newcommand \q{{\bf q}}
\newcommand \mH{\mcl{H}}
\newcommand \corners{\Gam_0\cup\Gam_L}
\newcommand \tpsi{\til{\psi}}
\newcommand \tPsi{\til{\Psi}}
\newcommand \pex{p_{ex}}
\newcommand \rx{{\rm x}}
\newcommand \ry{{\rm y}}
\def \msB {\mathscr{B}}
\def\mff{\mathfrak{f}}
\def\mfh{\mathfrak{h}}
\numberwithin{equation}{section}
\begin{document}

\title[Subsonic flow for multidimensional Euler-Poisson system]
{Subsonic flow for multidimensional Euler-Poisson system}

\author{Myoungjean Bae}
\address{M. Bae, Department of Mathematics\\
         POSTECH\\
         San 31, Hyojadong, Namgu, Pohang, Gyungbuk, Korea}
\email{mjbae@postech.ac.kr}
\author{Ben Duan}
\address{B. Duan, Department of Mathematics\\
         POSTECH\\
         San 31, Hyojadong, Namgu, Pohang, Gyungbuk, Korea}
\email{bduan@postech.ac.kr}

\author{Chunjing Xie}
\address{C. Xie, Department of mathematics, Institute of Natural Sciences, Ministry of Education Key Laboratory of Scientific and Engineering Computing, Shanghai Jiao Tong University\\
800 Dongchuan Road, Shanghai, China
}
\email{cjxie@sjtu.edu.cn}

\begin{abstract}
We establish unique existence and stability of subsonic potential flow for steady Euler-Poisson system in a multidimensional nozzle of a finite length when prescribing the electric potential difference on non-insulated boundary from a fixed point at the exit, and prescribing the pressure at the exit of the nozzle. The Euler-Poisson system for subsonic potential flow can be reduced to a nonlinear elliptic system of second order. In this paper, we develop a technique to achieve a priori $C^{1,\alp}$ estimates of solutions to a quasi-linear second order elliptic system with mixed boundary conditions in a multidimensional domain with Lipschitz continuous boundary. Particularly, we discovered a special structure of the Euler-Poisson system which enables us to obtain $C^{1,\alp}$ estimates of velocity potential and electric potential functions, and this leads us to establish structural stability of subsonic flows for the Euler-Poisson system under perturbations of various data.

\end{abstract}
\keywords{Euler-Poisson system, subsonic flow, steady flow, compressible, multidimensional nozzle, existence, stability, second order elliptic system, $C^{1,\alp}$ regularity, gradient estimates, uniform estimates, Lipschitz boundary}
\subjclass[2010]{
35J47, 35J57, 35J66, 35M10, 76N10}

\date{\today}

\maketitle

\section{Introduction and Main Results}
\subsection{Preliminary}

The following nonlinear system, called the \emph{Euler-Poisson system},
\begin{equation}\label{UnsteadyEP}
\left\{
\begin{aligned}
& \rho_t+ \Div (\rho \bu)=0, \\
& (\rho \bu)_t+\Div (\rho \bu \otimes \bu +p{\mathbf{I}}_n)=\rho \nabla \Phi, \\
& (\rho \mE)_t +\Div(\rho\mE \bu +p\bu)=\rho \bu\cdot \nabla\Phi,\\
& \Delta \Phi=\rho-b(x),%
\end{aligned}%
\right.
\end{equation}
models various physical phenomena including the propagation of electrons in
submicron semiconductor devices and plasmas (cf.\cite{MarkRSbook}), and the biological transport of ions for channel proteins (cf.\cite{Shu}). In the hydrodynamical model of semiconductor devices or plasmas, $\bu,
\rho$, $p$, and $\mE$ represent the macroscopic particle velocity, electron density,
pressure, and the total energy, respectively.  The electric potential $\Phi$ is  generated by the
Coulomb force of particles. $\mathbf{I}_n$ is the $n\times n$ identity matrix and $b({\rx})>0$ stands for the density of fixed,
positively charged background ions. In the biological model describing transport of ions between the extracellular side and the cytoplasmic side of
the membranes(cf. \cite{Shu}), $\rho$, $\rho \bu$ and $\Phi$ are the ion
concentration, the ion translational mass, and the electric potential,
respectively. The system \eqref{UnsteadyEP} is closed with the aid of definition of total energy and the equation of state
\begin{equation}
\mE=\frac{|\bu|^2}{2}+e,\quad p=p(\rho, e),
\end{equation}
where $e$ is the internal energy. By the second law of thermodynamics, the entropy $S$ is a constant along each particle trajectory in smooth steady flow of \eqref{UnsteadyEP}. So the entropy $S$ is globally a constant if $S$ is a constant at the entrance of flow. In this paper, we consider the case when the entropy $S$ is globally a constant, and such a case is called \emph{isentropic}. For isentropic flow, we can write the pressure $p$ and the internal energy $e$ as
\begin{equation}
\label{2-a8}
p=p(\rho),  \quad e=\int^{\rho}_{k_0} \frac{p'(\varrho)}{\varrho} d\varrho-\frac{p(\rho)}{\rho}
\end{equation}
for a constant $k_0>0$. We assume that $p\in C^{0}([0,+\infty))\cap C^{3}((0,\infty))$ satisfies:
\begin{equation}
\label{2-a5}
p(0)=0,\quad  p'(\rho)>0, \quad p''(\rho)\ge 0\quad\tx{for all}\quad \rho>0\quad\tx{and}\quad p(+\infty)=+\infty.
\end{equation}
For a constant $\gam\ge 1$, $p(\rho)=\rho^{\gam}$ is a typical example of $p$ satisfying \eqref{2-a5}, which corresponds to polytropic gas in gas dynamics.

The steady Euler-Poisson system of isentropic flow is
\begin{equation}\label{SteadyEP}
\begin{cases}
& \Div(\rho \bu)=0, \\
& \Div (\rho \bu\otimes \bu +p{\bI}_n)=\rho \nabla\Phi, \\
& \Delta \Phi=\rho -b%
\end{cases}
\end{equation}%
for a given function $b>0$.
There are several issues to make the system \eqref{SteadyEP} complicated. The first is that \eqref{SteadyEP} is a mixed type system, and its type depends on \emph{the Mach number} $M$ which is given by $M=\frac{|\bu|}{c(\rho)}$ for $c(\rho)=\sqrt{p'(\rho)}$. Here, $c$ is called the \emph{local sound speed}. If $M<1$, then \eqref{SteadyEP} can be decomposed into a nonlinear elliptic system and homogeneous transport equations, and the flow is said \emph{subsonic}. If $M>1$, on the other hand, \eqref{SteadyEP} can be decomposed into a nonlinear hyperbolic-elliptic coupled system and homogeneous transport equations at best, and the flow is said \emph{supersonic}. The second issue is that the last equation in \eqref{SteadyEP}, which is a \emph{Poisson equation}, has a nonlocal effect to the other equations in \eqref{SteadyEP}, and it makes the fluid variables $\rho, {\bf u}$ and electric potential $\Phi$ interact in a highly nonlinear way. Also, physical boundary conditions such as fixed exit pressure give nonlinear boundary conditions for the system \eqref{SteadyEP}.

It is our goal to prove unique existence and stability of subsonic flows for steady Euler-Poisson system in a multidimensional nozzle under perturbations of exit pressure, electric potential difference on non-insulated boundary and under perturbations of the nozzle itself. Our motivation is originated from the study on structural stability of transonic shocks occurring in flow governed by steady Euler-Poisson system. There have been a few studies on transonic shocks of Euler-Poisson system(cf.\cite{MarkPhase, Gamba1d, LRXX, LuoXin, RosiniPhase}). In \cite{LuoXin}, the authors considered one dimensional solutions of \eqref{SteadyEP} with a constant background charge $b(x)=b_0>0$, and proved the unique existence of transonic shock solutions provided that the entrance and exit data are appropriately given. So it is natural to ask whether these one dimensional transonic shocks are dynamically and structurally stable. The dynamical stability of one dimensional transonic shock solution is achieved in \cite{LRXX}. In order to study structural stability of transonic shocks, however, it is inevitable to consider small perturbations of one dimensional transonic shocks in multidimensional domain, but there are very few known results about multidimensional solutions of Euler-Poisson system(cf.\cite{Gamba1d, GambaMorawetz}). Comparing with extensive studies and recent significant progress on transonic shock solutions of the Euler system(see \cite{Ch-F3, Schen-Yu1, XinYinCPAM} and references therein), stability problems for multidimensional transonic flows of the Euler-Poisson system are essentially open. The main difference of the Euler-Poisson system from the Euler system is that the Poisson equation for electric potential is coupled with the other equations in the Euler-Poisson system. And, this makes it hard to analyze even one dimensional solution of the Euler-Poisson system. In fact, one dimensional flow of the Euler-Poisson system behaves very differently from the one of the Euler system(cf.\cite{LuoXin}). And it is even harder to study multidimensional transonic flow of the Euler-Poisson system due to nonlinear interaction between the electric potential $\Phi$ and all the other fluid variables.

As the first step to investigate stability of multidimensional transonic flow of the Euler-Poisson system, we establish the unique existence and stability of subsonic flows of steady Euler-Poisson system under perturbations of the exit pressure and electric potential difference on non-insulated boundary. There have been a few results about existence of subsonic solution of hydrodynamic equations, which are the Euler-Poisson system with relaxation, under smallness assumptions on the flow velocity for both unsteady and steady cases(see \cite{DeMark1d, DeMark3d, Guo, MarkZAMP}). In this paper, we prove existence of multidimensional subsonic solutions of steady Euler-Poisson system without smallness assumption on the flow velocity.


 Fix an open, bounded and connected set $\Lambda\subset \R^{n-1}(n\ge 2)$ with a smooth boundary $\der\Lambda$, and define a nozzle $\N$ by
\begin{equation}
\label{domain}
\N:=\{\rx=(\rx',x_n)\in\R^n: \rx'\in\Lambda,\;\; x_n\in(0, L)\}\subset \R^n.
\end{equation}
The nozzle boundary $\partial\N$ consists of the entrance $\Gamma_0=\Lambda\times\{0\}$, the exit $\Gamma_L=\Lambda\times\{L\}$, and the insulated boundary $\Gamma_w=\partial \Lambda\times (0, L)$.
In order to study the system \eqref{SteadyEP} in a multidimensional domains $\N$ with arbitrary cross-section $\Lambda$, we consider irrotational flow where the velocity $\bu$ of the flow is represented by
 \begin{equation}
 \label{2-b3}
 \bu=\nabla\vphi
 \end{equation}
for a scalar function $\vphi$ which is called a \emph{velocity potential} function. By \eqref{2-a8} and \eqref{2-b3}, the second equation in \eqref{SteadyEP} can be rewritten as
\begin{equation}
\label{2-b2n}
\rho \nabla (\msB-\Phi)=0
\end{equation}
for
\begin{equation}\label{2-a4}
\msB=\frac{1}{2}|\nabla\vphi|^2 +\int_{k_0}^{\rho}\frac{p'(\varrho)}{\varrho} d\varrho.
\end{equation}
For $\rho>0$, \eqref{2-b2n} implies
\begin{equation}
\label{2-b3n}
\msB-\Phi\equiv K_0
\end{equation}
for a constant $K_0$. Without loss of generality, we choose $K_0=0$. Set
\begin{equation}
\label{rho-eqn}
h(\rho):=\int_{k_0}^{\rho}\frac{p'(\varrho)}{\varrho}d\varrho.
\end{equation}
Then, the equation \eqref{2-b3n} with $K_0=0$ implies that $h(\rho)=\Phi-\frac 12|\nabla\vphi|^2.$
From the condition \eqref{2-a5}, we have $h'(\rho)>0$ for all $\rho>0$ and $\underset{\rho\to \infty}{\lim}\;h(\rho)=\infty$. So the function $h$ is invertible wherever $\Phi-\frac 12|\nabla\vphi|^2>\underset{\rho\to 0+}{\liminf}\;h(\rho)$, in which case one can rewrite \eqref{rho-eqn} as
$\rho=h^{-1}(\Phi-\frac 12|{\nabla\vphi}|^2)$.
We use this expression to reduce \eqref{SteadyEP} to a nonlinear system of second order equations for $\vphi$ and $\Phi$:
\begin{align}
\label{2-b2}
&\Div\bigl(\rho(\Phi,|\nabla\vphi|^2)\nabla\vphi\bigr)=0,\\
\label{2-b3d}
&\Delta \Phi= \rho(\Phi,|\nabla\vphi|^2)-b
\end{align}
with $\rho>0$ given by
\begin{equation}
\label{2-b4}
\rho(\Phi,|\nabla\vphi|^2)=h^{-1}(\Phi-\frac 12|\nabla\vphi|^2)
\end{equation}
provided that $h^{-1}$ is well defined. If we regard \eqref{2-b2} as an equation for $\vphi$, then it is mixed type. More precisely, \eqref{2-b2} is elliptic if and only if
\begin{equation}
\label{2-b5}
|\nabla\vphi|^2<p'(\rho)\tx{\emph{(subsonic)}}
\end{equation}
and is {\emph{hyperbolic}} if and only if
\begin{equation}
\label{2-b5-d}
|\nabla\vphi|^2>p'(\rho)\tx{\emph{(supersonic)}}.
\end{equation}
The system of \eqref{2-b2} and \eqref{2-b3d} becomes a quasilinear elliptic system if \eqref{2-b5} holds, and a hyperbolic-elliptic coupled system if \eqref{2-b5-d} holds.

Our interest is on stability of subsonic solution under perturbations of exit pressure and electric potential difference on non-insulated boundary from a fixed point. So the boundary conditions are formulated as follows. First, for a given function $\pex$ on $\Gam_L$, set
\begin{equation}
\label{2-b8}
p\bigl(\rho(\Phi,|\nabla\vphi|^2)\bigr)=p_{ex} \quad\tx{on}\quad \Gamma_L.
\end{equation}
 On the wall boundary, slip/insulated boundary conditions for $\vphi$ and $\Phi$ are prescribed as follows:
\begin{equation}
\label{2-b9-2}
\der_{{\bf n}_w}\vphi=\der_{{\bf n}_w}\Phi=0\;\;\tx{on}\;\;\Gam_w
\end{equation}
where  ${\bf n}_w$ is the unit inward normal vector  on $\Gam_w$. We fix a point $\rx_0$ on $\Gam_L$, and prescribe the electric potential difference between two points $\rx\in\Gam_0\cup\Gam_L$ and $\rx_0$ as follows:
\begin{equation}\label{pdiff}
\Phi(\rx)-\Phi(\rx_0)=
\begin{cases}\bar\Phi_{en}(\rx)&\text{for } \rx\in \Gamma_0\\
\bar\Phi_{ex}(\rx)&\text{for } \rx\in\Gamma_L
\end{cases}.
\end{equation}
In \eqref{pdiff}, the value of $\Phi(\rx_0)$ is uniquely determined by \eqref{2-b3n} and one point boundary condition for the Bernoulli's function:
 \begin{equation}\label{bcpressure}
\msB(\rx_0)=\msB_0.
\end{equation}
\begin{remark}
Differently from the Euler system, the Bernoulli's function $\msB=\frac 12|\nabla\vphi|^2+\int_{k_0}^{\rho}\frac{p'(\varrho)}{\varrho}d\varrho$ is not a constant along each streamline due to the equation \eqref{2-b2n}. So we call $\msB$ the Bernoulli's function rather than the Bernoulli's invariant. This is one of differences of the Euler-Poisson system from the Euler system.
\end{remark}
Finally, homogeneous Dirichlet boundary condition for $\vphi$ is imposed at the entrance:
\begin{equation}\label{bcvphi}
\vphi=0\quad \text{ on } \quad\Gamma_0.
\end{equation}
The boundary data $p_{ex}, \bar{\Phi}_{en}, \bar{\Phi}_{ex}$ and $\msB_0$ will be specified later.

\subsection{Main theorem}
If we fix $b$ as a constant $b_0>0$ in the equation \eqref{2-b3d} then the equations \eqref{2-b2} and \eqref{2-b3d} become invariant under translation. So if the boundary data $\bar\Phi_{en}$, $\bar\Phi_{ex}$ and $p_{ex}$ are all constants, then one may look for a solution $(\vphi,\Phi)$ as functions of $x_n$ only for $x_n\in (0,L)$. We note that $\bar\Phi_{ex}=0$ if $\bar\Phi_{ex}$ is a constant.
\begin{proposition}[\emph{One dimensional subsonic flow}]
\label{lemma-2-1}
Fix constants $b_0>0$ and $L>0$. Then there exists a nonempty set $\mathfrak{P}_0$ of parameters in $\R^2\times \R^+$ so that for any $(\bar\Phi_{en,0}, \msB_{0,0}, p_{ex,0})\in \mathfrak{P}_0$, if $(\bar\Phi_{en},\msB_0, p_{ex})=(\bar{\Phi}_{en,0},\msB_{0,0},p_{ex,0})$ then the system of \eqref{2-b2} and \eqref{2-b3d} in $\N$ with the boundary conditions \eqref{2-b8}--\eqref{bcvphi} has a unique $C^2$ one-dimensional solution $(\vphi,\Phi)$ in $\ol{\N}$ satisfying the inequalities $\rho(\Phi, |\nabla\vphi|^2)>0$ and $|\nabla\vphi|^2<p'(\rho(\Phi,|\nabla\vphi|^2))$ in $\ol{\N}$.
\end{proposition}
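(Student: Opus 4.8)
The plan is to reduce the boundary value problem, under the one‑dimensional ansatz $\vphi=\vphi(x_n)$, $\Phi=\Phi(x_n)$, to a scalar second–order autonomous ODE with one free parameter, and then to solve it by a shooting argument anchored at an explicit constant solution, using the implicit function theorem for existence and the maximum principle for uniqueness.

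First I would carry out the reduction. Writing $x=x_n$, equation \eqref{2-b2} integrates to $\rho\vphi'=J$ for a constant $J$ (the mass flux), and \eqref{2-b3d} reads $\Phi''=\rho-b_0$. Using \eqref{2-b4} together with $\rho\vphi'=J$, the relation $h(\rho)=\Phi-\frac12|\nabla\vphi|^2$ becomes $\Phi=g(\rho,J):=h(\rho)+\frac{J^2}{2\rho^2}$; since $g_\rho(\rho,J)=\frac{1}{\rho^3}(\rho^2p'(\rho)-J^2)>0$ precisely in the subsonic regime, $g(\cdot,J)$ is invertible there, so $\rho=g^{-1}(\Phi,J)$ and $\Phi$ solves $\Phi''=F(\Phi,J):=g^{-1}(\Phi,J)-b_0$ on $(0,L)$, with $\vphi$ recovered by the quadrature $\vphi(x)=\int_0^x J/\rho$ (which realizes \eqref{bcvphi}). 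The conditions \eqref{2-b9-2} are automatic; \eqref{2-b8} forces $\rho(L)=p^{-1}(p_{ex,0})=:\rho_1$; since $\msB\equiv\Phi$ by \eqref{2-b3n} with $K_0=0$, the one–point condition \eqref{bcpressure} forces $\Phi(L)=\msB_{0,0}$, and then $\Phi(L)=g(\rho_1,J)$ forces $J^2=2\rho_1^2(\msB_{0,0}-h(\rho_1))$ (I fix the direction of the flow by taking the mass flux positive, i.e.\ $J>0$); finally \eqref{pdiff} reduces to the single relation $\Phi(0)=\msB_{0,0}+\bar\Phi_{en,0}$ (the exit part being vacuous, forcing only that $\bar\Phi_{ex,0}=0$).

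Next I would set up the shooting. For given parameters and a shooting value $w$, let $\Phi(\cdot;w)$ solve $\Phi''=F(\Phi,J)$ on $[0,L]$ with $\Phi(L)=\msB_{0,0}$, $\Phi'(L)=w$, and put $\mathcal N(w):=\Phi(0;w)-\msB_{0,0}-\bar\Phi_{en,0}$. Differentiating in $w$, $\Theta:=\partial_w\Phi$ solves $\Theta''=F_\Phi(\Phi,J)\Theta$, $\Theta(L)=0$, $\Theta'(L)=1$; as $F_\Phi=1/g_\rho>0$ in the subsonic region, a maximum‑principle/comparison argument gives $\Theta<0$ on $[0,L)$, hence $\partial_w\mathcal N(w)=\Theta(0)\neq0$ wherever the solution stays subsonic on $[0,L]$. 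To anchor this I would use the explicit solution: fix $J_0$ with $0<J_0^2<b_0^2p'(b_0)$; then $\rho\equiv b_0$, $\Phi\equiv\Phi^*_0:=g(b_0,J_0)$, $\vphi=(J_0/b_0)x$ is a subsonic one‑dimensional solution, corresponding to $w=0$ and to parameters $P_0:=(0,\Phi^*_0,p(b_0))$. Applying the implicit function theorem to $\mathcal N$ at $(w,P)=(0,P_0)$ produces an open neighborhood $\mathfrak P_0\subset\R^2\times\R^+$ of $P_0$ and a continuous map $P\mapsto w(P)$ with $\mathcal N=0$; since ODE solutions near the constant one persist on all of $[0,L]$ and remain in the subsonic set (which, for fixed $J$, is the interval $\{\rho:\rho^2p'(\rho)>J^2\}$ because $\rho\mapsto\rho^2p'(\rho)$ is increasing by $p'>0$, $p''\ge0$), each $P\in\mathfrak P_0$ yields a one‑dimensional solution with $\rho>0$ and $|\nabla\vphi|^2<p'(\rho)$ on $\ol\N$; the $C^2$ (in fact higher) regularity follows from $p\in C^3$.

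For uniqueness, two one‑dimensional subsonic solutions with the same parameters (and the same sign of $J$) share the same $\rho_1$, the same $J$, and the same endpoint values $\Phi(L)=\msB_{0,0}$, $\Phi(0)=\msB_{0,0}+\bar\Phi_{en,0}$; their potential difference $\psi$ satisfies $\psi''=c(x)\psi$ with $c(x)=\int_0^1F_\Phi(\cdot,J)\,dt>0$ (the subsonic region being an interval in $\rho$, hence in $\Phi$, the interpolating states stay subsonic) and $\psi(0)=\psi(L)=0$, so $\psi\equiv0$ by the maximum principle and the two solutions coincide. I expect the principal difficulty to be bookkeeping rather than depth: faithfully translating all of \eqref{2-b8}--\eqref{bcvphi} into ODE data, verifying invertibility of $g(\cdot,J)$ and that the shot trajectories stay subsonic along the whole nozzle, and pinning the sign $\Theta(0)<0$; the convexity of the subsonic set in $\rho$ is precisely what makes both the shooting and the maximum‑principle steps clean.
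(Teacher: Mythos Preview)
Your argument is correct and complete for the proposition as stated. The uniqueness half is essentially identical to the paper's: both reduce to a linear equation $\psi''=c(x)\psi$ with $c>0$ (the paper interpolates in $\rho$ via $H'(\rho)$, you interpolate in $\Phi$ via $F_\Phi=1/g_\rho$, which is the same thing) and invoke the maximum principle.

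The existence half is where the approaches diverge. The paper does not shoot or use the implicit function theorem; instead it parametrizes one--dimensional subsonic solutions by initial--value data $(J_0,\rho_0,E_0)$ for the first--order system $(\rho u)'=0$, $(\rho u^2+p)'=\rho E$, $E'=\rho-b_0$, invokes the phase--plane analysis of \cite{LuoXin} to produce, for each $J_0>0$, a maximal set $\mathfrak P_1(J_0)$ of admissible $(\rho_0,E_0)$, and then reads off the corresponding boundary parameters $(\bar\Phi_{en,0},\msB_{0,0},p_{ex,0})$ via \eqref{1d-eqn1}--\eqref{1d-eqn3}; $\mathfrak P_0$ is defined as the image of $\bigcup_{J_0>0}\mathfrak P_1(J_0)$ under this correspondence. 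Your route is more self--contained: you exhibit the explicit constant state $\rho\equiv b_0$ (which the paper never singles out), verify the shooting derivative $\Theta(0)\neq 0$ directly from $F_\Phi>0$, and apply the implicit function theorem. What you gain is that you avoid the external reference and get a clean argument from first principles; what you lose is size---your $\mathfrak P_0$ is only a neighborhood of the single point $(0,g(b_0,J_0),p(b_0))$, whereas the paper's construction yields the full (maximal) parameter set of one--dimensional subsonic background solutions, which is what one actually wants when later perturbing around an \emph{arbitrary} background in Theorem~\ref{theorem1}.
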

The proof of Proposition \ref{lemma-2-1} is in Section \ref{appendix-pf}.

\begin{remark}
We can find one-dimensional solutions for a nonconstant function $b$ provided that the boundary data $(\bar\Phi_{en,0}, \msB_{0,0}, p_{ex,0})$ are properly chosen depending on $b$. Details can be found in Appendix \ref{section-nonconstant-b}.
\end{remark}

\begin{definition}[Background solution]
\label{def-background-sol}
Fix constants $b_0>0$ and $L>0$, and let the parameter set $\mathfrak{P}_0$ be as in Proposition \ref{lemma-2-1}. For given constants $(\bar\Phi_{en,0}, \msB_{0,0}, p_{ex,0})\in \mathfrak{P}_0$, let $(\vphi,\Phi)(x_n)$ be the corresponding one-dimensional solution to the system of \eqref{2-b2} and \eqref{2-b3d} in $\N$ with the boundary conditions \eqref{2-b8}--\eqref{bcvphi} for the boundary data $(\bar\Phi_{en,0}, \msB_{0,0}, p_{ex,0})$. For such solution $(\vphi,\Phi)(x_n)$, we define two functions $(\vphi_0,\Phi_0)(x_1,\cdots,x_n)$ in $\N$ by
\begin{equation}
\label{2-b6}
\begin{split}
&\vphi_0(x_1,\cdots, x_n):=\vphi(x_n),\\
&\Phi_0(x_1,\cdots, x_n):=\Phi(x_n).
\end{split}
\end{equation}
We call $(\vphi_0,\Phi_0)$ the background solution corresponding to $(\bar\Phi_{en,0}, \msB_{0,0}, p_{ex,0})\in \mathfrak{P}_0$.
$\vphi_0$ and $\Phi_0$ are smooth in $\ol{\N}$ and satisfy
\begin{equation}
\label{background-sol-prop}
\rho(\Phi_0,|\nabla\vphi_0|^2)\ge \nu_0,\quad p'(\rho(\Phi_0,|\nabla\vphi_0|^2))-|\nabla\vphi_0|^2\ge \nu_0\quad\tx{in}\quad\ol{\N}
\end{equation}
for a constant $\nu_0>0$ depending on $b_0$, $L$, $\bar\Phi_{ex}$, $\msB_{0,0}$ and $p_{ex,0}$.
\end{definition}
Our goal is to prove stability of a background solution under small perturbations of the background charge $b$ and the boundary data. In Appendix \ref{section-perturbation}, we also consider stability of a background solution under small perturbations of the nozzle $\N$.
\begin{problem}
\label{problem2}
Fix $b_0>0$ and $L>0$, and let the parameter set $\mathfrak{P}_0$ be as in Proposition \ref{lemma-2-1}. Given $(\bar\Phi_{en,0}, \msB_{0,0}, p_{ex,0})\in\mathfrak{P}_0$, let $(\vphi_0,\Phi_0)$ be the corresponding background solution.
Also, let $(\bar\Phi_{en},\bar\Phi_{ex},p_{ex})$ be small perturbations of $(\bar\Phi_{en,0},0,\bar p_{ex,0})$ and let a constant $\msB_0$ be close to $\msB_{0,0}$. And, let $\rho(\Phi,|\nabla\vphi|^2)>0$ be defined by \eqref{2-b4}. Find a solution $(\vphi,\Phi)$ to the nonlinear boundary value problem:
\begin{align}
\label{nbvp-1}
&\begin{cases}
\Div (\rho(\Phi,|\nabla\vphi|^2)\nabla\vphi)=0\\
\Delta\Phi=\rho(\Phi,|\nabla\vphi|^2)-b
\end{cases}\quad\tx{in}\quad\N\\
\label{nbvp-2}
&\vphi=0\;\;\tx{on}\;\;\Gam_0, \quad p(\rho(\Phi,|\nabla\vphi|^2))=p_{ex}\;\;\tx{on}\;\;\Gam_L\\
\label{nbvp-3}
&\Phi=\begin{cases}
\msB_0+\bar{\Phi}_{en}&\tx{on}\;\;\Gam_0\\
\msB_0+\bar{\Phi}_{ex}&\tx{on}\;\;\Gam_L
\end{cases}\\
\label{nbvp-4}
&\der_{{\bf n}_w}\vphi=\der_{{\bf n}_w}\Phi=0\;\;\tx{on}\;\;\Gam_w.
\end{align}
\end{problem}
In order to state our main results on Problem \ref{problem2}, weighted H\"{o}lder norms are introduced first.

For a bounded connected open set $\Om\subset\R^n$, let $\Gam$ be a closed portion of $\der\Om$. For $\rx,\ry\in\Om$, set
\begin{equation*}
\delta_{\rx}:=\rm{dist}(\rx,\Gam)\quad \text{and}\quad  \delta_{\rx,\ry}:=\min(\delta_{\rx},\delta_{\ry}).
\end{equation*}
For $k\in\R$, $\alp\in(0,1)$ and $m\in \mathbb{Z}^+$, define the standard H\"{o}lder norms by
\begin{align*}
&\|u\|_{m,0,\Om}:=\sum_{0\le|\beta|\le m}\sup_{\rx\in \Om}|D^{\beta}u(\rx)|,\quad
[u]_{m,\alp,\Om}:=\sum_{|\beta|=m}\sup_{\rx, \ry\in\Om,\rx\neq  \ry}\frac{|D^{\beta}u(\rx)-D^{\beta}u(\ry)|}{|\rx-\ry|^{\alp}},
\end{align*}
and the weighted H\"{o}lder norms by
\begin{align*}
&\|u\|_{m,0,\Om}^{(k,\Gam)}:=\sum_{0\le|\beta|\le m}\sup_{\rx\in \Om}\delta_{\rx}^{\max(|\beta|+k,0)}|D^{\beta}u(\rx)|,\\
&[u]_{m,\alp,\Om}^{(k,\Gam)}:=\sum_{|\beta|=m}\sup_{\rx,\ry\in\Om, \rx\neq \ry}\delta_{\rx,\ry}^{\max(m+\alp+k,0)}\frac{|D^{\beta}u(\rx)-D^{\beta}u(\ry)|}{|\rx-\ry|^{\alp}},\\
&\|u\|_{m,\alp,\Om}:=\|u\|_{m,0,\Om}+[u]_{m,\alp,\Om},\quad
\|u\|_{m,\alp,\Om}^{(k,\Gam)}:=\|u\|_{m,0,\Om}^{(k,\Gam)}+[u]_{m,\alp,\Om}^{(k,\Gam)},
\end{align*}
where $D^{\beta}$ denotes $\der_{x_1}^{\beta_1}\cdots\der_{x_n}^{\beta_n}$ for a multi-index $\beta=(\beta_1,\cdots,\beta_n)$ with $\beta_j\in\mathbb{Z}_+$ and $|\beta|=\sum_{j=1}^n\beta_j$. $C^{m,\alp}_{(k,\Gam)}(\Om)$ denotes the completion of the set of all smooth functions whose $\|\cdot\|_{m,\alp,\Om}^{(k,\Gam)}$ norms are finite.

The main result of this work is the following.
\begin{theorem}
\label{theorem1}Let $\N$ be as in \eqref{domain}.
Fix $b_0>0$ and $L>0$, and let the parameter set $\mathfrak{P}_0$ be as in Proposition \ref{lemma-2-1}. Given $(\bar\Phi_{en,0}, \msB_{0,0}, p_{ex,0})\in\mathfrak{P}_0$, let $(\vphi_0,\Phi_0)$ be the corresponding background solution. Assume that $b,(\bar\Phi_{en},\bar\Phi_{ex},p_{ex})$ and $\msB_0$ are given as small perturbations of $b_0$, $(\bar\Phi_{en,0},0,\bar p_{ex,0})$ and $\msB_{0,0}$, respectively, in the following sense:
\begin{equation}
\label{data}
\begin{split}
&\|b-b_0\|_{\alp,\N}\le\sigma,\\
&\|\bar\Phi_{en}-\bar\Phi_{en,0}\|_{2,\alp,\Gam_0}
+\|\bar\Phi_{ex}\|_{2,\alp,\Gam_L}+\|\pex- p_{ex,0}\|_{\alp,\Gam_L} \le \sigma,\\
&|\msB_0-\msB_{0,0}|\le \sigma
\end{split}
\end{equation}
for a small constant $\sigma>0$ to be specified below. Also, suppose that $\bar{\Phi}_{en}$ and $\bar\Phi_{ex}$ satisfy the compatibility conditions
\begin{equation}
\label{compatibility}
\der_{{\bf n}_w}\bar\Phi_{en}=0\;\;\tx{on}\;\;\ol{\Gam}_0\cap\ol{\Gam}_w,\quad
\der_{{\bf n}_w}\bar{\Phi}_{ex}=0\;\;\tx{on}\;\;\ol{\Gam}_L\cap\ol{\Gam}_w.
\end{equation}
Then, for any given $\alp\in(0,1)$, there exists a constant $\bar\sigma>0$ depending on $b_0, L, \bar\Phi_{en,0}, \msB_{0,0}, p_{ex,0}$ and $\alp$ such that wherever $\sigma\in(0,
\bar\sigma]$, if the boundary data and $b$ satisfy \eqref{data} and \eqref{compatibility},
then the nonlinear boundary value problem \eqref{nbvp-1}--\eqref{nbvp-4} has a unique solution $(\vphi,\Phi)\in [C^{1,\alp}(\ol\N)\cap C^{2,\alp}(\N)]^2$ satisfying the following properties:
\begin{itemize}
\item[(a)] The equations in \eqref{nbvp-1} form a uniformly elliptic system in $\N$. Equivalently, the solution $(\vphi,\Phi)$ satisfies the inequality
    \begin{equation*}
    p'(\rho(\Phi,|\nabla\vphi|^2))-|\nabla\vphi|^2\ge \bar\nu>0\quad\tx{in}\quad\ol{\N}
    \end{equation*}
    for a positive constant $\bar{\nu}$, i.e., the flow governed by $(\vphi,\Phi)$ is subsonic;

\item[(b)] $(\vphi,\Phi)$ satisfy the estimate
   \begin{equation}
\label{2-c1}
\|\vphi-\vphi_0\|_{2,\alp,\N}^{(-1-\alp,\corners)}+
\|\Phi-\Phi_0\|_{2,\alp,\N}^{(-1-\alp, \corners)}\le C\sigma,
\end{equation}
for
$\sigma$ in \eqref{data}.
The constants $\bar\nu$ and $C$ depend only on  $b_0, L, \bar\Phi_{en,0}, \msB_{0,0}, p_{ex,0}, n,\Lambda$ and $\alp$.
\end{itemize}
\end{theorem}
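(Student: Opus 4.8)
\smallskip
\noindent\emph{Proof strategy.} The plan is to recast Problem \ref{problem2} as a fixed point problem for the perturbation $(\psi,\Psi):=(\vphi-\vphi_0,\,\Phi-\Phi_0)$ and to solve it by a contraction argument in a small ball of a weighted H\"older space. Fix $\alp\in(0,1)$, let $\sigma$ be as in \eqref{data}, and for a large constant $C_*$ to be fixed set
\[
\mcA:=\Bigl\{(\psi,\Psi)\in\bigl[C^{2,\alp}_{(-1-\alp,\corners)}(\N)\bigr]^2:\ \|\psi\|_{2,\alp,\N}^{(-1-\alp,\corners)}+\|\Psi\|_{2,\alp,\N}^{(-1-\alp,\corners)}\le C_*\sigma\Bigr\}.
\]
For $(\psi,\Psi)\in\mcA$ with $\sigma$ small, $(\vphi,\Phi):=(\vphi_0+\psi,\Phi_0+\Psi)$ is $C^1$-close to the background, so by \eqref{background-sol-prop} the density $\rho(\Phi,|\nabla\vphi|^2)=h^{-1}(\Phi-\frac12|\nabla\vphi|^2)$ is well defined, positive, and satisfies $p'(\rho)-|\nabla\vphi|^2\ge\nu_0/2$; this gives claim (a) at the end, with $\bar\nu=\nu_0/2$.

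The structural features that make the scheme work — the ``special structure'' of the Euler--Poisson system — are the following. First, the principal parts of \eqref{nbvp-1} are decoupled: with $\Phi$ regarded as known, $\Div(\rho(\Phi,|\nabla\vphi|^2)\nabla\vphi)=0$ is a scalar quasilinear divergence-form equation for $\vphi$; with $\vphi$ known, $\Delta\Phi=\rho(\Phi,|\nabla\vphi|^2)-b$ is a scalar semilinear equation for $\Phi$ increasing in $\Phi$; and the coupling enters only through the lower-order density $\rho=h^{-1}(\Phi-\frac12|\nabla\vphi|^2)$, costing no regularity. Second, on the exit $\Gam_L$ the Dirichlet datum $\Phi=\msB_0+\bar\Phi_{ex}$ together with the pressure relation $p(\rho(\Phi,|\nabla\vphi|^2))=\pex$ combine into a condition on $\nabla\vphi$ alone,
\[
\tfrac12|\nabla\vphi|^2=\msB_0+\bar\Phi_{ex}-h(p^{-1}(\pex))=:g\ \tx{on}\ \Gam_L,
\]
and since $\nabla\vphi_0$ points along the $x_n$-axis and $g$ is close to the positive background value $\frac12|\nabla\vphi_0|^2$ on $\Gam_L$, this is solved near the background for $\der_{x_n}\vphi=\sqrt{2g-|\nabla'\vphi|^2}$ ($\nabla'$ being the gradient in $\rx'$), a nonlinear oblique condition whose linearization at the background is the Neumann condition $\der_{x_n}\psi=O(\sigma)$. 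Thus the boundary conditions decouple as well, and I define an iteration map $\mcF$ on $\mcA$: for $(\psi,\Psi)\in\mcA$ put $\til\vphi:=\vphi_0+\psi$, $\til\Phi:=\Phi_0+\Psi$, $\til\rho:=\rho(\til\Phi,|\nabla\til\vphi|^2)\in C^\alp(\ol{\N})$, let $\hPhi$ solve $\Delta\hPhi=\til\rho-b$ in $\N$, $\hPhi=\msB_0+\bar\Phi_{en}$ on $\Gam_0$, $\hPhi=\msB_0+\bar\Phi_{ex}$ on $\Gam_L$, $\der_{{\bf n}_w}\hPhi=0$ on $\Gam_w$, and let $\hvphi$ solve $\Div(\til\rho\nabla\hvphi)=0$ in $\N$, $\hvphi=0$ on $\Gam_0$, $\der_{{\bf n}_w}\hvphi=0$ on $\Gam_w$, $\der_{x_n}\hvphi=\sqrt{2g-|\nabla'\til\vphi|^2}$ on $\Gam_L$ (a given Neumann datum, since $\nabla'\til\vphi$ is known); then set $\mcF(\psi,\Psi):=(\hvphi-\vphi_0,\,\hPhi-\Phi_0)$. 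A fixed point of $\mcF$ solves \eqref{nbvp-1}--\eqref{nbvp-4}, because then $\til\vphi=\hvphi=\vphi$, $\hPhi=\Phi$, and the exit condition becomes $p(\rho(\Phi,|\nabla\vphi|^2))=\pex$.

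The analytic core is the a priori estimate $\|\hvphi-\vphi_0\|_{2,\alp,\N}^{(-1-\alp,\corners)}+\|\hPhi-\Phi_0\|_{2,\alp,\N}^{(-1-\alp,\corners)}\le C\sigma$, with $C$ depending only on the quantities listed in the theorem and absorbing, for small $\sigma$, the terms quadratic in $C_*\sigma$ produced by the nonlinearities; this yields $\mcF(\mcA)\subset\mcA$ (fix $C_*$, then $\sigma$) and, at the fixed point, \eqref{2-c1}. The estimate splits into an interior part and a boundary part. In the interior, divergence-form Schauder with the coefficient $\til\rho\in C^\alp$ gives $\hvphi\in C^{1,\alp}$; rewriting the $\vphi$-equation in nondivergence form by means of $\rho=h^{-1}(\Phi-\frac12|\nabla\vphi|^2)$,
\[
\bigl[\rho\,\delta_{ij}-(h^{-1})'\,\der_i\vphi\,\der_j\vphi\bigr]\der_{ij}\vphi=-(h^{-1})'\,\nabla\Phi\cdot\nabla\vphi,
\]
one sees that the $D^2\vphi$ coming from $\nabla\rho$ is absorbed into a modified, still uniformly elliptic and $C^\alp$ principal part (elliptic precisely because the flow is subsonic), so that $C^{1,\alp}$ upgrades to $C^{2,\alp}(\N)$ and then $\hPhi\in C^{2,\alp}(\N)$. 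For the boundary part, the decoupling reduces matters to two scalar mixed boundary value problems on $\N$, whose boundary is smooth except for the two right-angle $(n-2)$-dimensional edges $\der\Lambda\times\{0\}$ and $\der\Lambda\times\{L\}$ contained in $\ol{\Gam}_0\cup\ol{\Gam}_L$. The compatibility conditions \eqref{compatibility} (and those trivially met by $\vphi=0$ and $\der_{{\bf n}_w}\vphi=0$) control the data at these edges to the extent needed for $C^{1,\alp}$ regularity up to $\der\N$, and weighted Schauder estimates for mixed problems near right-angle edges — in the classes with weight at $\corners$ appearing in the statement — then deliver the bound, with constants uniform in $\Lambda$ thanks to the product structure of $\N$ near the edges.

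Finally, $\mcF$ is a contraction: $\mcF(\psi_1,\Psi_1)-\mcF(\psi_2,\Psi_2)$ solves linear problems whose data are controlled by $\|(\psi_1,\Psi_1)-(\psi_2,\Psi_2)\|$ times a Lipschitz constant of the nonlinear ingredients, itself $O(\sigma)$ on $\mcA$ once the leading parts have been put into the linearization; hence $\mcF$ contracts in a weaker norm such as $\|\cdot\|_{1,\alp,\N}^{(-\alp,\corners)}$, in which $\mcA$ is complete (a bounded ball of a space that embeds compactly). Banach's fixed point theorem gives a unique $(\psi,\Psi)\in\mcA$, hence a unique solution $(\vphi,\Phi)$ of \eqref{nbvp-1}--\eqref{nbvp-4} with properties (a) and (b). I expect the main obstacle to be this boundary step: proving the weighted $C^{1,\alp}$/$C^{2,\alp}$ estimates, valid for every $\alp\in(0,1)$ and with uniform constants, for the mixed boundary value problems near the edges $\der\Lambda\times\{0,L\}$, in particular for the $\vphi$-equation with its oblique exit condition. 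A genuinely coupled second-order elliptic system possesses neither a De Giorgi--Nash--Moser nor a Schauder theory, so it is exactly the decoupling of the principal parts and of the boundary conditions, together with the nondivergence rewriting above, that reduces everything to the scalar theory and makes the scheme close.
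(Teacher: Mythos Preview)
Your iteration map $\mcF$ is not a contraction, and for the same reason it is not even a self-map on $\mcA$. The coupling between $\vphi$ and $\Phi$ through $\rho=h^{-1}(\Phi-\tfrac12|\nabla\vphi|^2)$ is \emph{first order}, not quadratic: for the $\Phi$-step you have
\[
\Delta(\hPhi-\Phi_0)=\bigl(\rho(\til\Phi,|\nabla\til\vphi|^2)-\rho(\Phi_0,|\nabla\vphi_0|^2)\bigr)-(b-b_0),
\]
and the first bracket is $O(|\Psi|+|D\psi|)=O(C_*\sigma)$ with an $O(1)$ constant, not $O((C_*\sigma)^2)$; the elliptic estimate then gives $\|\hPhi-\Phi_0\|\le K(C_*+1)\sigma$ with $K$ a fixed structural constant that you have no way to make $<1$. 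The same happens for $\hvphi$: in $\Div(\til\rho_1\nabla(\hvphi_1-\hvphi_2))=-\Div((\til\rho_1-\til\rho_2)\nabla\hvphi_2)$ the right side carries $\nabla\hvphi_2\approx\nabla\vphi_0$, which is $O(1)$. Your parenthetical ``once the leading parts have been put into the linearization'' is exactly the point, but the map you wrote down does not do this; if you do put the leading parts in, the linear problem you must solve at each step is a \emph{coupled} system, and you no longer have two scalar problems.

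This is precisely where the paper's argument lives. The linearization at the background keeps the cross terms: $L_1(\psi,\Psi)$ contains $\Psi\,\der_z{\bf A}(\Phi_0,D\vphi_0)$ and $L_2(\psi,\Psi)$ contains $\der_{\bf q}B(\Phi_0,D\vphi_0)\cdot D\psi$, so the linear boundary value problem is a genuine second-order elliptic system. The ``special structure'' is not the decoupling of principal parts (which you identified correctly) but the identity
\[
\der_z{\bf A}(z,\q)+\der_{\bf q}B(z,\q)=0,
\]
which makes the two off-diagonal contributions cancel in the energy functional $\mcl{L}[(\xi,\eta),(\xi,\eta)]$, yielding coercivity and hence an $H^1$ estimate for the coupled system via Lax--Milgram. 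Only after this $H^1$ (and then $C^\alp$) estimate is in hand does the paper exploit the decoupled principal parts to bootstrap to $C^{1,\alp}$ and weighted $C^{2,\alp}$, treating the cross terms as known lower-order data. With the coupling absorbed into the linear operator, the remainders on the right-hand side are genuinely quadratic and the contraction closes in the strong norm $\|\cdot\|_{2,\alp,\N}^{(-1-\alp,\corners)}$ itself. Your scheme, by freezing $\til\rho$, pushes those $O(1)$ cross terms to the right-hand side, and no choice of weaker norm or compactness rescues the contraction (and compactness would in any case cost you uniqueness).
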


For simplicity, we hereafter say that a constant $C$ depends on the data if $C$ depends only on  $b_0, L, \bar\Phi_{en,0}, \msB_{0,0}, \bar p_{ex,0}, n,\Lambda$ and $\alp$.

We point out that the boundary conditions \eqref{2-b8}--\eqref{bcvphi} are physically measurable. In one-dimensional solutions, they correspond to prescribing the pressure(or equivalently prescribing the density) at both ends of the nozzle. See Remark \ref{remark-1a} for details.

In order for a solution $(\vphi,\Phi)$ of \eqref{2-b2} and \eqref{2-b3d} to satisfy \eqref{2-b5}, it is essential to establish $L^{\infty}$ estimates of $(D\vphi,\Phi)$.
It is the new feature of this work that we prove $C^{1,\alp}$ regularity of solutions to a class of second order elliptic systems with mixed boundary conditions on a Lipschitz continuous boundary, and use the result to find a solution $(\vphi,\Phi)$ of \eqref{2-b2} and \eqref{2-b3d} so that $(\vphi,\Phi)$ satisfy \eqref{2-b5} in the nozzle $\N$. Furthermore, the $C^{1,\alp}$ estimates that we achieve applies up to the boundary $\der\N$. Differently from elliptic equations, $C^{1,\alp}$ regularity of second order elliptic system is not generally known. In the Euler-Poisson system, however, we discovered a special structure(see property (c) of Lemma \ref{lemma-3-1}), and use it and the divergence structure of the Euler-Poisson system to get a priori $H^1$ estimate of weak solutions to second order elliptic system which yields a priori $C^{1,\alp}$ estimates of weak solutions to second order elliptic systems with mixed boundary conditions in the Lipschitz domain $\N$. Thanks to this property, we can prove unique existence and stability of multidimensional subsonic solutions of Euler-Poisson system without smallness assumptions on the current flux or flow velocity. This distinguishes the results in this paper from previous works in  \cite{DeMark1d, DeMark3d, Guo, MarkZAMP}. Furthermore, the technique developed in this paper can be used to deal with subsonic flow of nonisentropoic Euler-Poisson system with nonzero vorticity. This case will be discussed in the upcoming papers.

\subsection{One dimensional subsonic flow}\label{appendix-pf}
Before we proceed further to prove Theorem \ref{theorem1}, we first prove Proposition \ref{lemma-2-1}.
\begin{proof}[Proof of Proposition \ref{lemma-2-1}]
For simplicity, we regard a one dimensional solution $(\vphi,\Phi)(x_1,\cdots,x_n)$ in $\N$ as functions $(\vphi,\Phi)(x_n)$ on the interval $(0,L)$. Note that any functions $(\vphi,\Phi)(x_n)$ satisfy the boundary conditions \eqref{2-b9-2} on $\Gam_w$.
Suppose that $(\vphi(x_n),\Phi(x_n))$ is a one-dimensional solution to the system of \eqref{2-b2} and \eqref{2-b3d} in $\N$, and let us set $u:=\der_{x_n}\vphi$ and $E=\der_{x_n}\Phi$. Then, for $\rho$ defined by \eqref{2-b4}, $(\rho, u, E)$ satisfy
\begin{equation}
\label{2-c2}
\begin{split}
&(\rho u)'=0\\
&(\rho  u^2+p(\rho))'=\rho E\\
&E'=\rho-b_0
\end{split}
\end{equation}
in the interval $(0,L)$ where $'$ is the differentiation with respect to $x_n$. The first equation in \eqref{2-c2} implies that
\begin{equation}
\label{2-c3}
\rho u \equiv J_0\quad\tx{on}\quad(0,L)
\end{equation}
for a constant $J_0>0$. Then by repeating the analysis in \cite{LuoXin} for the ODE system \eqref{2-c2}, we can state as follows:

\emph{Fix constants $b_0>0$ and $L>0$. For any given constant $J_0>0$, there exists a nonempty set $\mathfrak{P}_1(J_0)\subset \R^+\times \R$ so that for any $(\rho_0, E_0)\in \mathfrak{P}_1(J_0)$, the initial value problem}
\begin{equation}
\label{ibp}
\begin{cases}
u=\frac{J_0}{\rho}\\
(\rho u^2+p(\rho))'=\rho E\\
E'=\rho-b_0
\end{cases}\tx{on}\;\;(0,L),\quad
\begin{cases}
\rho(0)=\rho_0\\
E(0)=E_0
\end{cases}
\end{equation}
{\emph{has a unique smooth solution $(\rho,u,E)(x_n)$ on the interval $[0,L]$ with satisfying }}
\begin{equation}
\label{2-a7}
\rho>0\quad\tx{and}\quad p'(\rho)-u^2\ge \nu_1>0\quad\tx{on}\quad[0,L]
\end{equation}
{\emph{for some positive constant $\nu_1$ where the choice of $\nu_1$ depends on $\rho_0, E_0, J_0$ and $L$.} }

Hereafter, $\mathfrak{P}_1(J_0)$ denotes the maximal set of $(\rho_0,E_0)$ for which \eqref{ibp} has a unique smooth solution $(\rho,u,E)$ satisfying \eqref{2-a7}.

For each $J_0>0$ and $(\rho_0,E_0)\in \mathfrak{P}_1(J_0)$, we claim that there exists unique corresponding boundary data $(\bar\Phi_{en,0},\msB_{0,0}, p_{ex,0})$ for boundary conditions \eqref{2-b8}--\eqref{bcpressure}. Fix $J_0>0$ and $(\rho_0,E_0)\in\mathfrak{P}_1(J_0)$. The first equation of \eqref{ibp} yields $u(0)=\frac{J_0}{\rho_0}$. By substituting this into \eqref{2-a4}, we get $\msB(0)=\frac 12(\frac{J_0}{\rho_0})^2+\int_{k_0}^{\rho_0}\frac{p'(\varrho)}{\varrho}d\varrho$. Therefore
$\Phi(0)=\msB(0)=\frac12(\frac{J_0}{\rho_0})^2+\int_{k_0}^{\rho_0}\frac{p'(\varrho)}{\varrho}d\varrho$ is obtained from the equation $\msB-\Phi=0$.
Using this equation again gives $\Phi(L)=\msB_{0,0}$ and $\Phi(0)=\bar\Phi_{en,0}+\msB_{0,0}$. Therefore, we get the following equation for $\bar\Phi_{en,0}$ and $\msB_{0,0}$
\begin{equation}
\label{1d-eqn1}
\bar\Phi_{en,0}+\msB_{0,0}=\frac12\left(\frac{J_0}{\rho_0}\right)^2+\int_{k_0}^{\rho_0}\frac{p'(\varrho)}{\varrho}d\varrho.
\end{equation}
For the solution $(\rho, E)$ to \eqref{ibp}, set
\begin{equation}
\label{1d-eqn2}
p_{ex,0}=p(\rho(L)).
\end{equation}
Substituting $\rho=\rho(L)$ and $u(=|\nabla\vphi|)=\frac{J_0}{\rho(L)}$ into \eqref{2-a4} yields
\begin{equation}
\label{1d-eqn3}
\msB_{0,0}=\msB(L)=\frac 12\left(\frac{J_0}{\rho(L)}\right)^2+\int_{k_0}^{\rho(L)}\frac{p'(\varrho)}{\varrho}d\varrho.
\end{equation}
By \eqref{1d-eqn1} and \eqref{1d-eqn3}, the value of $\bar\Phi_{en,0}$ is uniquely determined by $(\rho_0,E_0)$ and $L$. For such $(\bar\Phi_{en,0},\msB_{0,0}, p_{ex,0})$,  define two functions $\vphi$ and $\Phi$ by
\begin{equation}
\label{2-b6}
\vphi(x_n):=\int_0^{x_n} u(t) dt,\quad \Phi(x_n):=\msB_{0,0}+\bar{\Phi}_{en,0}+\int_{0}^{x_n}E(t) dt
\end{equation}
for the solution $(\rho, E)$ to \eqref{ibp}. Then the pair of $(\vphi,\Phi)$ becomes a $C^2$ solution of \eqref{2-b2} and \eqref{2-b3d} in $\N$ with the boundary conditions \eqref{2-b8}--\eqref{bcvphi}. Moreover, by \eqref{2-a7},  we have
\begin{equation}
\label{ellipticity}
p'(\rho(\Phi,|\nabla\vphi|^2))-|\nabla\vphi|^2\ge \nu_1 \quad\tx{in}\quad\ol{\N}
\end{equation}
for $\nu_1>0$ same as in \eqref{2-a7}.
This proves the existence of nonempty parameter set $\mathfrak{P}_0$. Next, we prove the uniqueness of one-dimensional solution for fixed $(\bar\Phi_{en,0}, \msB_{0,0}, p_{ex,0})\in\mathfrak{P}_0$.

Fix the boundary data $(\bar\Phi_{en,0}, \msB_{0,0}, p_{ex,0})\in\mathfrak{P}_0$, and let $(\vphi^{(1)}, \Phi^{(1)})$ and $(\vphi^{(2)}, \Phi^{(2)})$ be two one-dimensional solutions of \eqref{2-b2} and \eqref{2-b3d} in $\N$ with the boundary conditions \eqref{2-b8}--\eqref{bcvphi} and satisfying \eqref{2-b5}. For each $j=1,2$, let $\rho^{(j)}$ be defined by \eqref{2-b4} for $(\vphi^{(j)}, \Phi^{(j)})$. By \eqref{2-a5}, for any given $p_{ex,0}>0$, there exists unique $\rho_{ex,0}>0$ satisfying $p(\rho_{ex,0})=p_{ex,0}$. Then each $u^{(j)}=(\vphi^{(j)})'$ satisfies
\begin{equation}
\label{2-c7}
\frac 12 (u^{(j)}(L))^2=\msB_{0,0}-\int_{k_0}^{\rho_{ex,0}}\frac{p'(\varrho)}{\varrho}d\varrho
\quad\tx{for}\quad j=1,2.
\end{equation}
This fixes unique value of $J_0=\rho_{ex,0}u^{(j)}(L)$ for $j=1,2$ in the equation \eqref{2-c3} so that each $\Phi^{(j)}$ can be expressed as
\begin{equation}
\label{2-c4}
\Phi^{(j)}(x_n)=\frac 12\Bigl(\frac{J_0}{\rho^{(j)}(x_n)}\Bigr)^2
+\int_{k_0}^{\rho^{(j)}(x_n)}\frac{p'(\varrho)}{\varrho}d\varrho=:H(\rho^{(j)}(x_n))\quad\tx{on}\quad[0,L].
\end{equation}
Since $p''(\rho)>0$ for $\rho>0$, there exists unique $\rho_s>0$ depending on $J_0$ so that we have
\begin{equation}
\label{2-c8}
H'(\rho)=\frac{1}{\rho^3}(\rho^2p'(\rho)-J_0^2)
\begin{cases}
<0&\tx{for}\;\;\rho<\rho_s\\
=0&\tx{at}\;\;\rho=\rho_s\\
>0&\tx{for}\;\;\rho>\rho_s
\end{cases}.
\end{equation}
It follow from \eqref{2-c8} that
\begin{equation*}
\int_0^1 H'(t\rho^{(1)}+(1-t)\rho^{(2)})dt>0\quad\tx{on}\quad [0,L]
\end{equation*}
because $\rho^{(j)}>\rho_s$ on $[0,L]$ for $j=1,2$ by \eqref{2-b5}.
Using \eqref{2-b3d}, \eqref{pdiff} and \eqref{2-c4} with $\Phi(L)=\msB_{0,0}$ and $\bar\Phi_{ex}=0$, one can easily show that $\Phi^{(1)}-\Phi^{(2)}$ satisfies
\begin{equation*}
(\Phi^{(1)}-\Phi^{(2)})''-
\frac{\Phi^{(1)}-\Phi^{(2)}}{\int_0^1H'(t\rho^{(1)}+(1-t)\rho^{(2)})dt}=0
\quad\tx{on}\quad(0,L),
\end{equation*}
and the boundary conditions $\Phi^{(1)}-\Phi^{(2)}=0$ at $x_n=0$ and $L$. Then the maximum principle for elliptic equations implies $\Phi^{(1)}=\Phi^{(2)}$ on $[0,L]$, and this yields $\rho^{(1)}=\rho^{(2)}$ on $[0,L]$. So we get $\vphi^{(1)}=\vphi^{(2)}$ on $[0,L]$.

For any given $(\bar\Phi_{en,0}, \msB_{0,0}, p_{ex,0})\in\mathfrak{P}_0$, we have shown that there exists a unique one-dimensional subsonic solution $(\vphi,\Phi)$. Thus, for the density $\rho$ defined by \eqref{2-b4} from $(\vphi,\Phi)$, we get $(\rho_0,E_0)=(\rho(\Phi(0),|\vphi'(0)|^2),\Phi'(0))\in\mathfrak{P}_1(J_0)$ where $J_0$ is uniquely determined from $(\bar\Phi_{en,0}, \msB_{0,0}, p_{ex,0})$ through \eqref{2-c7}. Hence we can find the maximal set of parameter set $\mathfrak{P}_0$ satisfying the statement of Proposition \ref{lemma-2-1} by collecting all parameters $(\bar\Phi_{en,0}, \msB_{0,0}, p_{ex,0})$  corresponding to $(\rho_0,E_0)\in \underset{J_0>0}{\cup}\mathfrak{P}_1(J_0)$.
\end{proof}

\begin{remark}
\label{remark-1a}
The proof of Proposition \ref{lemma-2-1} shows that, for any given $(\bar\Phi_{en,0}, \msB_{0,0}, p_{ex,0})\in \mathfrak{P}_0$, the constant $J_0(=\rho u)$ is uniquely determined. Therefore, one can use the first two equations in \eqref{2-c2} to rewrite the last equation in \eqref{2-c2} as
\begin{equation}\label{1deq}
\left(\frac{1}{\rho}\left(\frac{J_0^2}{\rho}+p(\rho)\right)'\right)'=\rho -b_0.
\end{equation}
By \eqref{2-a7}, one can also find a unique constant $\rho_{en,0}>0$ satisfying the equation \eqref{1d-eqn1} with $\rho_0=\rho_{en,0}$. Therefore, the boundary value problem in Proposition \ref{lemma-2-1} can be considered as a boundary value problem for the equation \eqref{1deq} with the boundary conditions
\begin{equation*}
\rho(0)=\rho_{en,0},\quad \rho(L)=\rho_{ex,0}.
\end{equation*}
And, this is equivalent to fix the pressure at both ends of the nozzle $\N$ by \eqref{2-a8}.
\end{remark}

The rest of the paper is organized as follows. In Section \ref{Section-linearization}, we rewrite \eqref{nbvp-1}--\eqref{nbvp-4} as a nonlinear boundary value problem for $(\psi,\Psi):=(\vphi-\vphi_0, \Phi-\Phi_0)$, and set an iteration scheme to solve the nonlinear boundary value problem for $(\psi,\Psi)$. In Section \ref{section-3}, we achieve a priori $H^1$ estimate of weak solutions to a linear boundary value problem containing a second order elliptic system. By using this estimate, we prove the unique existence of weak solutions and establish a priori $C^{\alp}$ estimate of the weak solutions up to the boundary of $\N$. Then a priori $C^{1, \alpha}$ estimates and weighted $C^{2,\alp}$ estimates of the weak solutions are obtained, and these estimates lead to prove the unique existence of $C^2$ solutions to the linear boundary value problem. It should be emphasized that the $H^1$ estimate of weak solutions for second order elliptic system is the key ingredient to get the main result of this paper. Section \ref{section4} is devoted to prove Theorem \ref{theorem1} by applying the a priori estimates given in Section \ref{section-3} and the contraction mapping principle. In Appendix \ref{section-nonconstant-b}, the existence of one dimensional subsonic solutions to the Euler-Poisson system with variable background charge is proved. The last appendix gives an outline of proof for stability of subsonic flows under small perturbations of the boundaries of the nozzle $\N$.

\section{Linearized boundary value problem and Iteration scheme}
\label{Section-linearization}
For the rest of paper, let $\N$, $b_0$, $L$ be as in Theorem \ref{theorem1}. And we fix $(\bar \Phi_{en,0},\msB_{0,0},p_{ex,0})\in \mathfrak{P}_0$, and let $(\vphi_0,\Phi_0)$ be the corresponding background solution. Let $b$, $(\bar{\Phi}_{en}, \bar{\Phi}_{ex},\pex)$ satisfy the estimates \eqref{data} for $\sigma\in(0,\bar{\sigma})$ with $\bar{\sigma}$ to be determined later.
\subsection{Linearization of the equations in \eqref{nbvp-1}}
\label{subsec-lin-eqns}

For $(z, \q)=(z,q_1,\cdots, q_n)\in \R\times \R^n$ and for $\rho$ defined by \eqref{2-b4}, set
\begin{equation}
\label{3-a1}
{\bf A}(z,\q)=(A_1,\cdots, A_n)(z,\q)=\rho(z,|{\bf q}|^2){\bf q},
\quad
B(z,\q)=\rho(z,|{\bf q}|^2).
\end{equation}
Fix a constant $\eps_0>0$.
For any $(z,\q)$ satisfying $z-\frac 12|\q|^2\ge h(\eps_0)$ for $h$ defined by \eqref{rho-eqn}, ${\bf A}(z,{\bf q})$ and $B(z,{\bf q})$ in \eqref{3-a1} are well-defined, and they are differentiable with respect to $z$ and ${\bf q}$. Furthermore, one can directly check that
\begin{equation}
\label{3-a2}
\begin{split}
&\der_z A_i(z,\q)=\der_zB(z,\q)q_i,\quad  \der_zB(z,\q)=\frac{B(z,\q)}{p'(B(z,\q))},\\
&\der_{q_j}A_i(z,\q)=B(z,\q)\Bigl(\delta_{ij}-\frac{q_iq_j}{p'(B(z,\q))}\Bigr),\\
& \der_{q_j}B(z,\q)=-\frac{B(z,\q)q_j}{p'(B(z,\q))}
\end{split}
\end{equation}
for $i,j=1,\cdots, n$, where $\delta_{ij}=1$ for $i=j$ and $\delta_{ij}=0$ otherwise.

Suppose that $(\vphi,\Phi)\in [C^2(\N)]^2$ satisfy the equations in \eqref{nbvp-1} in $\N$, and set
\begin{equation}
\label{3-f4}
(\psi,\Psi):=(\vphi,\Phi)-(\vphi_0,\Phi_0).
\end{equation}
Since $(\vphi_0,\Phi_0)$ satisfy \eqref{nbvp-1} as well, by subtracting the equation $\Div(\rho(\Phi_0,|\nabla\vphi_0|^2)\nabla\vphi_0)=0$ from the first equation in \eqref{nbvp-1}, one has
\begin{equation}
\label{3-a3}
\begin{aligned}
L_1(\psi,\Psi):= & \Div\left(\sum_{j=1}^n\der_{q_j}A_i(\Phi_0, D\vphi_0)\der_j\psi+\Psi\der_z{\bf A}(\Phi_0,D\vphi_0)\right)\\
=& \Div{{\bf F}(\rx,\Psi,D\psi)}\;\;\tx{in}\;\;\N
\end{aligned}
\end{equation}
for ${\bf F}=(F_1,\cdots, F_n)$ given by
\begin{equation}
\label{3-a5}
-F_i(\rx,z,{{\bf q}})=\int_0^1 z\bigl[\der_zA_i(\Phi_0+\til z, D\vphi_0+{\til{\bf q}})\bigr]_{(\til z,{\til{\bf q}})=(0,\bm 0)}^{(t z,{\bf q})}+ q_j\bigl[\der_{q_j}A_i(\Phi_0,D\vphi_0+\til{\bf q})\bigr]_{\til{\bf q}=\bm 0}^{t {\bf q}}dt
\end{equation}
where $[G(X)]_{X=a}^b:=G(b)-G(a)$ with $\Phi_0$ and $D\vphi_0$ evaluated at $\rx\in\N$. Here, $D$ denotes $(\der_{x_1},\cdots, \der_{x_n})$, and each $\der_j$ denotes $\der_{x_j}$ for $j=1,\cdots, n$.

Next, subtracting the equation $\Delta \Phi_0=\rho(\Phi_0,|\nabla\vphi_0|^2)-b_0$ from the second equation in \eqref{nbvp-1} gives
\begin{equation}
\label{3-a4}
L_2(\psi,\Psi):=\Delta \Psi-\der_zB(\Phi_0,D\vphi_0)\Psi-\der_{{\bf q}}B(\Phi_0,D\vphi_0)\cdot D\psi=f(\rx,\Psi,D\psi)+b_0-b(\rx)
\end{equation}
for $f$ given by
\begin{equation}
\label{3-a6}
\begin{split}
f(\rx,z,{{\bf q}})
=\int_0^1z\bigl[\der_z B(\Phi_0+\til z, D\vphi_0+\til{\bf q})\bigr]_{(\til z,\til{\bf q})=(0,\bm 0)}^{(tz,\q)}+ q_j\bigl[\der_{q_j}B(\Phi_0,D\vphi_0+\til \q)\bigr]_{\til q=\bm 0}^{t\q} dt.
\end{split}
\end{equation}
There is a constant $\delta_1\in(0,1)$ depending on the data such that each $F_i(\rx,z,{\bf q})$ and $f(\rx,z, {\bf q})$ are well defined for all $(x,z,{\bf q})\in \mcl{D}_{3\delta_1}$ for
\begin{equation}
\label{delta1}
\mcl{D}_{3\delta_1}:=\N\times \{(z, {\bf q})\in  \R\times \R^n: |z|+|{\bf q}|< 3\delta_1\}.
\end{equation}
Then, the following lemma can be easily checked.
\begin{lemma}
\label{lemma-3-4}
Let $F_i$ and $f$ be defined by \eqref{3-a5} and \eqref{3-a6}, respectively. Then,
\begin{itemize}
\item[(a)] there exists a constant $C$ depending only on the data such that
\begin{equation*}
\begin{split}
&{\bf F}(\rx,0,{\bf 0})={\bf 0},\quad f(\rx,0,{\bf 0})=0,\\
&|D_{(z,{\q})}F_i(x,z,{\bf q})|\le C(|z|+|{\bf q}|),\;\;
|D_{(z,{\q})}f(x,z,{\bf q})|\le C(|z|+|{\bf q}|)
\end{split}
\end{equation*}
for all $(x, z,{\bf q})\in \mcl{D}_{2\delta_1}$ and $i=1,\cdots,n$;
\item[(b)] for any integer $k\ge 2$, there exists a constant $C_k$ depending only on the data and $k$ such that
    \begin{equation*}
    |D^k_{({z},{\bf q})}F_i(x,z,{\bf q})|\le C_k,\;\;
    |D^k_{(z,{\bf q})}f(x,{z},{\bf q})|\le C_k
    \end{equation*}
    for all $(x, z,{\bf q})\in \mcl{D}_{2\delta_1}$. Here, $D^k_{(z, \q)}$  denotes the $k$-th order derivatives with respect to $z$ and ${\bf q}$.
    \end{itemize}

\end{lemma}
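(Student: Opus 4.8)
The claims in Lemma \ref{lemma-3-4} are exactly the kind of statement that follows from unwinding the definitions \eqref{3-a5}, \eqref{3-a6} together with the explicit derivative formulas \eqref{3-a2}, so the plan is to reduce everything to elementary calculus estimates on the functions $\der_z A_i$, $\der_{q_j}A_i$, $\der_z B$, $\der_{q_j}B$, restricted to the compact-in-$(z,\q)$ region $\mcl{D}_{2\delta_1}$ where $h^{-1}$ (hence $\rho$, hence $B$ and $\bf A$) is smooth and bounded with bounded derivatives. First I would record that, by the choice of $\delta_1$, the argument $(\Phi_0+\til z, D\vphi_0+\til{\q})$ stays in a fixed compact subset of $\{z-\frac12|\q|^2 > h(\eps_0)\}$ for all $(\rx,z,\q)\in\mcl{D}_{2\delta_1}$ and all $(\til z,\til\q)$ on the relevant segments; since $\Phi_0, D\vphi_0$ are smooth on $\ol\N$ and $p\in C^3$ with $p'>0$ there, all of $B$, $\bf A$ and their derivatives up to any fixed order are bounded on this set by constants depending only on the data. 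This is the only place the background solution and hypothesis \eqref{2-a5} enter.

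For part (a): the identity ${\bf F}(\rx,0,{\bf 0})={\bf 0}$, $f(\rx,0,{\bf 0})=0$ is immediate because both integrands in \eqref{3-a5}, \eqref{3-a6} carry an explicit factor of $z$ or $q_j$ and the bracketed differences vanish when the upper and lower limits coincide. For the derivative bounds, I would differentiate under the integral sign in $z$ and in $q_k$; each resulting term is either (i) a bracket $[\,\cdot\,]$, which by the mean value theorem is $O(|z|+|\q|)$ since the endpoints of the segment differ by $(tz,\q)$ or $t\q$ and the second derivatives of $A_i$, $B$ are bounded, or (ii) a factor $z$ or $q_j$ times the derivative of such a bracket, which is again $O(|z|+|\q|)$ — here one uses that $z\cdot O(1)$ and $q_j\cdot O(1)$ are both $\le C(|z|+|\q|)$. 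Summing the finitely many terms and integrating $\int_0^1 dt$ gives the stated bound with $C$ depending only on the data. Part (b) is even softer: for $k\ge 2$, differentiating \eqref{3-a5}, \eqref{3-a6} $k$ times in $(z,\q)$ produces finitely many terms, each a product of a bounded polynomial in $(z,\q)$ (of degree $\le 1$, from the explicit prefactors) with derivatives of $A_i$ or $B$ of order $\le k+1$ evaluated on the segments; all of these are bounded on $\mcl{D}_{2\delta_1}$ by the compactness remark above, so $|D^k_{(z,\q)}F_i|, |D^k_{(z,\q)}f|\le C_k$ with $C_k$ depending on the data and $k$, and the $(|z|+|\q|)$ gain is simply dropped.

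\textbf{Main obstacle.} There is no genuine obstacle here; the only point requiring a little care — and the one I would write out explicitly — is the verification that the segments $\{(\Phi_0+tz,\ D\vphi_0+\q)\}$, $\{(\Phi_0+\til z,\ D\vphi_0+\til\q): (\til z,\til\q)\in[0,(tz,\q)]\}$ etc. appearing inside the integrals remain in the domain of smoothness of $\rho$ uniformly over $\mcl{D}_{2\delta_1}$, which is what legitimizes both differentiation under the integral sign and the uniform bounds on $A_i$, $B$ and their derivatives. This is guaranteed by choosing $\delta_1$ small relative to $\dist$ of the compact set $\{(\Phi_0(\rx), D\vphi_0(\rx)):\rx\in\ol\N\}$ to the boundary of $\{z-\frac12|\q|^2>h(\eps_0)\}$, and the factor $3$ in $\mcl{D}_{3\delta_1}$ versus the factor $2$ in the lemma provides the needed slack so that even the $t\in[0,1]$ and segment interpolations stay inside $\mcl{D}_{3\delta_1}$.
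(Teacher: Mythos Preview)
Your proposal is correct and matches the paper's treatment: the paper offers no proof at all, remarking only that the lemma ``can be easily checked,'' and your argument supplies precisely the routine verification one would expect --- smoothness of $\rho=h^{-1}$ on the relevant compact set (guaranteed by the margin between $2\delta_1$ and $3\delta_1$), differentiation under the integral, and the mean value theorem applied to the bracketed differences.
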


\begin{lemma}
\label{lemma-3-1}
Let ${\bf A}(z,{\bf q})$ and $B(z,{\bf q})$ be as in \eqref{3-a1}.
\begin{itemize}
\item[(a)] The matrix $[a_{ij}]_{i,j=1}^n:=[\der_{q_j}A_i(\Phi_0,D\vphi_0)]_{i,j=1}^n$ is a strictly positive diagonal matrix in $\N$, and there exits a constant $\lambda>0$ satisfying
    \begin{equation}
    \label{3-b1}
    \lambda { I_n}\le [a_{ij}]_{i,j=1}^n\le \frac{1}{\lambda}{ I_n},
    \end{equation}
    and such $\lambda$ depends only on the data.
\item[(b)] Each $a_{ij}$ is smooth in $\N$. Also, there exists a constant $C_k>0$ depending on the data and $k$ to satisfy $\|a_{ij}\|_{C^k(\ol{\N})}\le C_k$ for all $i,j=1,\cdots, n$ and any nonnegative integer $k$.
\item[(c)] Let $h(\rho)$ be defined as in \eqref{rho-eqn}. For any constant $\eps_0>0$ and any $(z,\q)\in \R\times \R^n$ satisfying $z-\frac 12|\q|^2\ge h(\eps_0)$, the equality
\begin{equation}
\label{3-b2}
\der_{z}{\bf A}(z,\q)+\der_{\bf q}B(z,\q)=0
\end{equation}
holds.

\end{itemize}
\begin{proof}
From \eqref{2-b6} and \eqref{3-a2}, it is clear that $[a_{ij}]_{i,j=1}^n$ is a diagonal matrix with
\begin{equation}
\label{3-b3}
a_{ii}=
\begin{cases}
B(\Phi_0, D\vphi_0)&\tx{for}\;\;i<n\\
B(\Phi_0,D\vphi_0)\bigl(1-\frac{|D\vphi_0|^2}{p'(B(\Phi_0,D\vphi_0))}\bigr)&\tx{for}\;\;i=n.
\end{cases}
\end{equation}
Furthermore, \eqref{background-sol-prop} implies that there exists a constant $\lambda>0$ satisfying \eqref{3-b1}. So (a) is proved. (b) is already stated in Definition \ref{def-background-sol}. Finally, (c) can be easily checked from \eqref{3-a2}.
\end{proof}
\end{lemma}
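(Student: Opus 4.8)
The proof of Lemma \ref{lemma-3-1} is almost entirely a matter of unwinding the formulas \eqref{3-a2} for the first derivatives of $\mathbf{A}$ and $B$, together with the structural information on the background solution recorded in Definition \ref{def-background-sol}; I expect no genuine obstacle, only the bookkeeping of keeping track of which quantities are positive and bounded. The plan is to treat the three items in order, exploiting that $[a_{ij}]$ is already manifestly \emph{diagonal} from the formula $\der_{q_j}A_i(z,\q)=B(z,\q)\bigl(\delta_{ij}-q_iq_j/p'(B(z,\q))\bigr)$ once one evaluates at $(z,\q)=(\Phi_0,D\vphi_0)$ and remembers that $D\vphi_0=(0,\dots,0,\der_{x_n}\vphi_0)$, so $q_iq_j=0$ unless $i=j=n$.

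For part (a), I would first write down \eqref{3-b3}: for $i<n$ the off-diagonal vanishing gives $a_{ii}=B(\Phi_0,D\vphi_0)=\rho(\Phi_0,|\nabla\vphi_0|^2)$, and for $i=n$ one gets $a_{nn}=B(\Phi_0,D\vphi_0)\bigl(1-|D\vphi_0|^2/p'(B(\Phi_0,D\vphi_0))\bigr)$. Then I invoke \eqref{background-sol-prop}: the first inequality there, $\rho(\Phi_0,|\nabla\vphi_0|^2)\ge\nu_0$, bounds $a_{ii}$ from below for $i<n$ and also bounds $a_{nn}$ once combined with the second inequality $p'(\rho(\Phi_0,|\nabla\vphi_0|^2))-|\nabla\vphi_0|^2\ge\nu_0$, which forces $1-|D\vphi_0|^2/p'(B(\Phi_0,D\vphi_0))\ge \nu_0/p'(B(\Phi_0,D\vphi_0))>0$. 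Since $\vphi_0,\Phi_0$ are smooth on the compact set $\ol\N$, all the relevant quantities ($\rho$, $p'(\rho)$, $|D\vphi_0|^2$) are also bounded \emph{above}, so one produces a single constant $\lambda\in(0,1)$, depending only on the data, with $\lambda I_n\le[a_{ij}]\le\lambda^{-1}I_n$; this is \eqref{3-b1}.

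For part (b), smoothness of each $a_{ij}$ on $\ol\N$ follows because $a_{ij}$ is a composition of the smooth functions $\rho(\cdot,\cdot)$ and $p'(\cdot)$ (smooth on the range of values attained, by \eqref{2-a5} and the fact that $\rho(\Phi_0,|\nabla\vphi_0|^2)$ stays in a compact subset of $(0,\infty)$ by \eqref{background-sol-prop}) with the smooth functions $\Phi_0$ and $D\vphi_0$; the uniform $C^k(\ol\N)$ bounds $\|a_{ij}\|_{C^k(\ol\N)}\le C_k$ are then immediate from the chain rule and compactness of $\ol\N$, and indeed this was already asserted in Definition \ref{def-background-sol}. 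For part (c), the identity \eqref{3-b2} is the special structure the introduction advertises, and it is purely algebraic: from \eqref{3-a2}, the $i$-th component of $\der_z\mathbf{A}(z,\q)$ is $\der_zB(z,\q)\,q_i$, while the $i$-th component of $\der_{\mathbf{q}}B(z,\q)$ is $-B(z,\q)q_i/p'(B(z,\q))=-\der_zB(z,\q)\,q_i$, using the middle formula $\der_zB=B/p'(B)$; adding these gives $0$ componentwise. The hypothesis $z-\frac12|\q|^2\ge h(\eps_0)$ is needed only to guarantee that $\rho(z,|\q|^2)=h^{-1}(z-\frac12|\q|^2)$ — and hence all the expressions in \eqref{3-a2} — are well defined, so once that is in force the cancellation holds identically. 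The only point requiring a sentence of care is confirming that \eqref{background-sol-prop} indeed places $\rho(\Phi_0,|\nabla\vphi_0|^2)$ in a compact subset of $(0,\infty)$ so that $h^{-1}$ and $p'$ are evaluated on the region where they are smooth; everything else is direct substitution.
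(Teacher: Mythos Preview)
Your proof is correct and follows essentially the same route as the paper's: you use \eqref{3-a2} together with the fact that $D\vphi_0$ has only its $n$-th component nonzero to obtain the diagonal form \eqref{3-b3}, invoke \eqref{background-sol-prop} for the uniform ellipticity bounds in (a), refer back to Definition~\ref{def-background-sol} for (b), and verify (c) by direct substitution into \eqref{3-a2}. The paper's version is simply terser, leaving the explicit computations you spell out (especially the cancellation $\der_zB\,q_i+(-B q_i/p'(B))=0$ in (c)) to the reader.
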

Lemma \ref{lemma-3-1}(c) is crucial to get $H^1$ estimates of weak solutions to a linear boundary value problem associated with two elliptic operators $L_1$ and $L_2$ defined by \eqref{3-a3} and \eqref{3-a4}, respectively.
\subsection{Boundary conditions for $(\psi,\Psi)$}
\label{subsec-lin-bcs}
Given $\bar\Phi_{en}, \bar\Phi_{ex}, \pex$ and $\msB_0$, suppose that $(\vphi,\Phi)$ satisfy the boundary conditions \eqref{nbvp-2}--\eqref{nbvp-4}. Then $(\psi,\Psi)$ defined by \eqref{3-f4} satisfy
\begin{equation}
\label{3-f6}
\begin{split}
&\psi=0\quad\tx{on}\quad \Gam_0,\\
&\Psi=\begin{cases}
(\msB_0-\msB_{0,0})+(\bar\Phi_{en}-\bar\Phi_{en,0})=:\Psi_{en}&\tx{on}\;\;\Gam_0\\
(\msB_0-\msB_{0,0})+\bar\Phi_{ex}=:\Psi_{ex} &\tx{on}\;\;\Gam_L
\end{cases},\\
&\der_{{\bf n}_w}\psi=\der_{{\bf n}_w}\Psi=0\quad\tx{on}\quad \Gam_w
\end{split}
\end{equation}
\begin{equation}
\label{3-f7}
p(B(\Phi_0+\Psi,\nabla\vphi_0+\nabla\psi))-p(B(\Phi_0,\nabla\vphi_0))
=\pex-p_{ex,0}\quad\tx{on}\;\;\Gam_{ex}
\end{equation}
for $B(z,\q)$ defined by \eqref{3-a1}.
To simplify notations, set
\begin{equation*}
\rho:=B(\Phi_0+\Psi,\nabla\vphi_0+\nabla\psi),\quad \rho_{bg}:=B(\Phi_0,\nabla\vphi_0).
\end{equation*}
There exists a constant $\delta_2>0$ depending only on the data so that if
\begin{equation}
\label{delta2}
\|\Psi\|_{C^0(\ol{\N})}+\|\psi\|_{C^1(\ol{\N})}\le 4\delta_2,
\end{equation}
then
\begin{equation}
\label{lbd-density}
B(\Phi_0+\Psi, \nabla(\vphi_0+\psi))=h^{-1}(\Phi_0+\Psi-\frac 12|\nabla(\vphi_0+\psi)|^2)\ge \frac 12\nu_0>0
\end{equation}
holds in $\N$ for the constant $\nu_0$ in \eqref{background-sol-prop}. Then the left-hand side of \eqref{3-f7} is rewritten as $(\rho-\rho_{bg})\int_0^1p'(t\rho+(1-t)\rho_{bg})dt$, and $\rho-\rho_{bg}$ can be written as
\begin{equation*}
\begin{split}
&(\rho-\rho_{bg})\\
&=\Psi\int_0^1 B_z(\Phi_0+t\Psi,\nabla\vphi_0+t\nabla\psi)dt
+\nabla\psi\cdot\int_0^1 B_{\q}(\Phi_0+t\Psi,\nabla\vphi_0+t\nabla\psi)dt.
\end{split}
\end{equation*}
It follows that the boundary condition \eqref{3-f7} can be rewritten as
\begin{equation}
\label{3-f9}
B_{\q}(\Phi_0,\nabla\vphi_0)\cdot\nabla\psi
=\frac{\pex-p_{ex,0}}{\int_0^1p'(t\rho+(1-t)\rho_{bg})dt}+\hat g_2(\rx, \Psi,\nabla\psi)\quad\tx{on}\quad\Gam_L
\end{equation}
for
\begin{equation}
\label{3-g6}
\hat g_2(x,\Psi,\nabla\psi)=-\int_0^1\Psi B_z(\Phi_0+t\Psi,\nabla\vphi_0+t\nabla\psi)
+\nabla\psi\cdot [B_{\q}(\Phi_0+t\hat z,\nabla\vphi_0+t\hat{\q})]_{(\hat z,\hat{\q})=(0,0)}^{(\Psi,\nabla\psi)}dt.
\end{equation}
By \eqref{3-f6}, we can substitue $\Psi=\Psi_{ex}$ into \eqref{3-f9} to get a nonlinear boundary condition for $\psi$:
\begin{equation}
\label{3-g7}
B_{\q}(\Phi_0,\nabla\vphi_0)\cdot\nabla\psi
=g(\rx,\nabla\psi,\pex,\Psi_{ex})\quad\tx{on}\quad\Gam_L
\end{equation}
for
\begin{equation}
\label{3-g8}
\begin{split}
&g(\rx,\q,\pex,\Psi_{ex})=\frac{\pex-p_{ex,0}}{g_1(\rx,\q,\pex,\Psi_{ex})}
+\hat g_2(\rx,\Psi_{ex},\q)\\
&g_1(\rx,\q,\pex,\Psi_{ex})=\int_0^1p'(tB(\Phi_0+\Psi_{ex},\nabla\vphi_0+\q)
+(1-t)B(\Phi_0,\nabla\vphi_0))dt,
\end{split}
\end{equation}
where $\Phi_0, \vphi_0,$ and $\Psi_{ex}$ are evaluated at $\rx\in\Gam_L$.
\begin{lemma}
\label{lemma-3-5}
Let $\delta_2$ be as in \eqref{delta2}. Under the assumption \eqref{data}, if $4\sigma\le \delta_2$, then the following properties hold:
\begin{itemize}
\item[(a)] there exists a constant $C$ depending only on the data so that
    \begin{equation*}
    |D_{\q}g(\rx,\q,\pex,\Psi_{ex})|\le C(|\pex(\rx)-p_{ex,0}|+|\Psi_{ex}(\rx)|+|\q|)
    \end{equation*}
    for all $(\rx,\q)\in\ol{\Gam_L}\times\{\q\in\R^n:|\q|<2\delta_2\}$;
\item[(b)] for any integer $k\ge 2$, there exists a constant $C_k$ depending only on the data and $k$ so that
\begin{equation*}
|D^k_{\q}g(\rx,\q,\pex,\Psi_{ex})|\le C_k
\end{equation*}
for all $(\rx,\q)\in\ol{\Gam_L}\times\{\q\in\R^n:|\q|<2\delta_2\}$.
\end{itemize}
\end{lemma}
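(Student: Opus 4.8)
The strategy is to obtain both estimates by unwinding the explicit formula \eqref{3-g8} for $g$ and reducing everything to derivatives of the smooth functions $B$ and $p'$ evaluated at points that stay in a fixed compact region where all quantities are controlled by the data. First I would record that, under the hypothesis \eqref{data} with $4\sigma \le \delta_2$, the trace $\Psi_{ex} = (\msB_0 - \msB_{0,0}) + \bar\Phi_{ex}$ on $\Gam_L$ satisfies $\|\Psi_{ex}\|_{C^0(\ol{\Gam_L})} \le 2\sigma \le \delta_2/2$, so that for $|\q| < 2\delta_2$ the arguments $(\Phi_0 + \Psi_{ex}, \nabla\vphi_0 + \q)$ and $(\Phi_0, \nabla\vphi_0)$ lie in the set $\mcl{D}_{3\delta_1}$ (shrinking $\delta_2$ relative to $\delta_1$ if necessary) where $B$ is smooth and, by \eqref{lbd-density}, bounded below by $\frac12\nu_0 > 0$; consequently $p'$ is evaluated on a compact subinterval of $(0,\infty)$ bounded away from $0$. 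Thus $g_1(\rx,\q,\pex,\Psi_{ex}) \ge c_0 > 0$ for a constant $c_0$ depending only on the data, and $\hat g_2$, $g_1$ and all their $\q$-derivatives of every order are bounded on the relevant set by constants depending only on the data (and the order).

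For part (a), I would differentiate \eqref{3-g8} in $\q$. The term $\frac{\pex - p_{ex,0}}{g_1}$ contributes, after the quotient rule, a factor $|\pex(\rx) - p_{ex,0}|$ times $\frac{|D_\q g_1|}{g_1^2}$, and since $g_1 \ge c_0$ and $|D_\q g_1| \le C$ this is bounded by $C|\pex(\rx) - p_{ex,0}|$. For the term $\hat g_2$, I would use its definition \eqref{3-g6}: differentiating in $\q$ and using $\hat g_2(\rx,\Psi_{ex},0)$ together with the structure $[B_\q(\cdots)]_{(\hat z,\hat\q)=(0,0)}^{(\Psi_{ex},\q)}$, each resulting term carries either an explicit factor $\Psi_{ex}$ (from the first summand, $-\int_0^1 \Psi_{ex} B_z(\cdots)\,dt$, whose $\q$-derivative is $-\int_0^1 \Psi_{ex} t\, B_{z\q}(\cdots)\,dt$) or, from the bracket in the second summand, a factor that vanishes when $\q = 0$ and hence is $O(|\q|)$ by the mean value theorem applied to the smooth function $B_\q$. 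Combining these bounds gives $|D_\q g| \le C(|\pex(\rx) - p_{ex,0}| + |\Psi_{ex}(\rx)| + |\q|)$, which is (a). This is essentially the same computation used to prove Lemma 1.3(a) for $F_i$ and $f$.

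For part (b), one differentiates \eqref{3-g8} $k \ge 2$ times in $\q$. The quotient term produces, via the Fa\`a di Bruno / Leibniz rule, finite sums of products of derivatives of $p'$ composed with $B$, of derivatives of $B$ in $\q$, divided by powers $g_1^{-1}, \dots, g_1^{-(k+1)}$; since $p' \in C^3$ on the compact interval away from $0$, $B$ is smooth with bounded derivatives (Lemma 1.4(b)-type bounds on the $B$-derivatives, which follow directly from \eqref{3-a2}), and $g_1 \ge c_0 > 0$, every such term is bounded by a constant $C_k$ depending only on the data and $k$; here the smallness factors $|\pex - p_{ex,0}|, |\Psi_{ex}|, |\q|$ are no longer needed since we only claim a uniform bound. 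The $\hat g_2$ term is handled the same way, directly from \eqref{3-g6}. I do not expect any genuine obstacle: the only point requiring a little care is verifying at the outset that the arguments stay in the good region $\mcl{D}_{3\delta_1}$ and that $g_1$ has a strictly positive lower bound depending only on the data — once that is in place, everything reduces to the chain rule applied to compositions of smooth functions with uniformly bounded derivatives, exactly in the spirit of Lemma 1.3 and Lemma 1.4.
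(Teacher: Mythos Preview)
Your proposal is correct and matches the paper's approach: the paper itself states that Lemma~\ref{lemma-3-5} is a direct consequence of \eqref{lbd-density} and \eqref{3-g8} and skips the proof, and what you have written is exactly that direct verification by differentiating the explicit formula. One small correction in part~(a): the bracket $[B_{\q}(\Phi_0+t\hat z,\nabla\vphi_0+t\hat\q)]_{(\hat z,\hat\q)=(0,0)}^{(\Psi_{ex},\q)}$ does not vanish at $\q=0$ (only at $(\Psi_{ex},\q)=(0,0)$), so the mean value theorem gives $O(|\Psi_{ex}|+|\q|)$ rather than $O(|\q|)$; this does not affect your conclusion since both terms already appear on the right-hand side of the claimed bound.
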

Lemma \ref{lemma-3-5} is a direct consequence of \eqref{lbd-density} and \eqref{3-g8} so we skip the proof.

For later use, we note that the boundary conditions $\der_{{\bf n}_w}\psi=0$ on $\Gam_w$ and \eqref{3-g7} on $\Gam_L$ can be rewritten as conormal boundary conditions. On $\Gam_w$, the inward unit normal vector field ${\bf n}_w$ on $\Gam_w$ satisfies ${\bf n}_w\cdot (\underset{(n-1)}{\underbrace{0,\cdots, 0}}, 1)=0$. So, by \eqref{3-b3}, $\der_{{\bf n}_w}\psi=0$ is equivalent to
\begin{equation}
\label{conormal-1}
(\sum_{j=1}^na_{ij}\der_j\psi)\cdot{\bf n}_w=0\quad\tx{on}\quad\Gam_w.
\end{equation}
It follows from \eqref{3-a2} that $B_{\q}(\Phi_0,\nabla\vphi_0)=(\underset{(n-1)}{\underbrace{0,\cdots, 0}}, -\frac{J_0}{p'(B(\Phi_0, \nabla\vphi_0))})$ for $J_0=B(\Phi_0,\nabla\vphi_0)\der_n\vphi_0$ where $\Phi_0, \vphi_0$ and $\der_{n}\vphi_0$ are evaluated on $\Gam_L$. Combining this with \eqref{3-g7} gives
\begin{equation}
\label{conormal-2}
(\sum_{j=1}^na_{ij}\der_j\psi)\cdot{\bf n}_L=
\frac{a_{nn}(\rx)p'\left(B(\Phi_0,\nabla\vphi_0)\right)}{J_0}g(\rx,\nabla\psi,\pex,\Psi_{ex})
\quad\tx{on}\quad \Gam_L
\end{equation}
for the inward unit normal ${\bf n}_L=(\underset{(n-1)}{\underbrace{0,\cdots, 0}},-1)$ on $\Gam_L$.

\subsection{Iteration scheme}
\label{subsec-iteration}
Suppose that $\bar\sigma<1$ in Theorem \ref{theorem1}.
In \S \ref{subsec-lin-eqns} and \S \ref{subsec-lin-bcs}, we have seen that $(\vphi,\Phi)\in [C^1(\ol{\N})\cap C^2(\N)]^2$ solve \eqref{nbvp-1}--\eqref{nbvp-4} if and only if $(\psi,\Psi)=(\vphi-\vphi_0,\Phi-\Phi_0)$ satisfy the equations \eqref{3-a3} and \eqref{3-a4} in $\N$ and the boundary conditions \eqref{3-f6} and \eqref{3-g7} provided that
\begin{equation}
\label{delta-min}
\|\Psi\|_{C^0(\ol{\N})}+\|\psi\|_{C^1(\ol{\N})}\le 2\min\{\delta_1,\delta_2\}
\end{equation}
for $\delta_1$ and $\delta_2$ in \eqref{delta1} and \eqref{delta2}, respectively.

Given $\alp\in(0,1)$, we define an iteration set $\mcl{K}_M$ as follows:
\begin{equation}
\label{3-g1}
\mcl{K}_M:=\mcl{K}^{(1)}_M\times \mcl{K}^{(2)}_M
\end{equation}
where $\mcl{K}^{(1)}$ and $\mcl{K}^{(2)}$ are given by
\begin{equation*}
\begin{split}
&\mcl{K}^{(1)}_M:=\{\tpsi\in C^{2,\alp}_{(-1-\alp,\Gam_0\cup\Gam_L)}({\N}): \til{\psi}=0\;\tx{on}\;\Gam_0,\;
\|{\tpsi}\|_{2,\alp,\N}^{(-1-\alp,\corners)}\le \frac M2 \sigma\}\\
&\mcl{K}_M^{(2)}:=\{\tPsi\in C^{2,\alp}_{(-1-\alp,\Gam_0\cup\Gam_L)}({\N}):
\|{\tPsi}\|_{2,\alp}^{(-1-\alp,\corners)}\le \frac{M}{2}\sigma\}
\end{split}
\end{equation*}
for constants $M>1$ and $\sigma\in(0,1)$ to be determined later. If $M\sigma\le 2\min\{\delta_1,\delta_2\}$, then ${\bf F}(\rx,\tPsi, D\tpsi)$, $f(\rx,\tPsi,D\tpsi)$ are well defined in $\N$ for all $(\til{\psi}, \tPsi)\in\mcl{K}_M$, and $g(\rx,\nabla\tpsi,\pex,\Psi_{ex})$ is well defined on $\Gam_L$ for ${\bf F}, f, g$ defined by \eqref{3-a5}, \eqref{3-a6}, \eqref{3-g8}, respectively. Consider the following linear boundary value problem for $(\hat\psi,\hat\Psi)$
\begin{align}
\label{3-g2}
&\begin{cases}
L_1(\hat\psi,\hat\Psi)=\Div {\bf F}(\rx,\tPsi,D\tpsi)\\
L_2(\hat\psi,\hat\Psi)=f(\rx,\tPsi, D\tpsi)
\end{cases}\;\;\tx{in}\;\;\N
\end{align}
with boundary conditions \eqref{3-f6}(replacing $(\psi,\Psi)$ on the left-hand side by $(\hat{\psi},\hat{\Psi})$) and
\begin{align}
\label{3-g4}
B_{\q}(\Phi_0,\nabla\vphi_0)\cdot\nabla\hat\psi=
g(\rx,\nabla\tpsi,\pex,\Psi_{ex})\quad\tx{on}\quad\Gam_L,
\end{align}
where $L_1$ and $L_2$ are defined in \eqref{3-a3} and \eqref{3-a4}, respectively.
If  the linear elliptic system \eqref{3-g2} with boundary conditions \eqref{3-f6} and \eqref{3-g4} has a unique solution $(\hat\psi,\hat\Psi)\in [C^1(\ol{\N})\cap C^2(\N)]^2$, then an iteration mapping $\mathfrak{I}$ can be defined by
\begin{equation}
\label{3-g5}
\mathfrak{I}:(\tpsi,\tPsi)(\in\mcl{K}_M)\mapsto (\hat\psi,\hat{\Psi}).
\end{equation}
It is easy to see that $(\psi^*,\Psi^*)\in\mcl{K}_M$ is a fixed point of $\mathfrak{I}$ if and only if $(\vphi^*,\Phi^*)=(\vphi_0,\Phi_0)+(\psi^*,\Psi^*)$ is a solution to the nonlinear boundary value problem of \eqref{nbvp-1}--\eqref{nbvp-4}. So it suffices to prove unique existence of fixed point of $\mathfrak{I}$ in $\mcl{K}_M$ for the proof of Theorem \ref{theorem1}.
\begin{proposition}
\label{theorem2}
Let $\N$ be as in \eqref{domain}. Fix $b_0>0$ and $L>0$, and let the parameter set $\mathfrak{P}_0$ be as in Proposition \ref{lemma-2-1}. Given $(\bar\Phi_{en,0}, \msB_{0,0}, p_{ex,0})\in\mathfrak{P}_0$, let $(\vphi_0,\Phi_0)$ be the corresponding background solution. Assume that $b,(\bar\Phi_{en},\bar\Phi_{ex},p_{ex})$ and $\msB_0$ satisfy \eqref{data} and \eqref{compatibility}. Then, for any given $\alp\in(0,1)$, there exist constants $M>1$ and $\sigma_1\in(0,1)$ depending on the data and $\alp$ such that wherever $\sigma\in(0,
\sigma_1]$ in \eqref{data}, the iteration mapping $\mathfrak{I}$ defined by \eqref{3-g5} has a unique $(\psi^*,\Psi^*)$ in $\mcl{K}_M$ satisfying
\begin{align}
&\mathfrak{I}(\psi^*,\Psi^*)=(\psi^*,\Psi^*),\notag\\
\label{subsonicity}
&p'(\rho(\Phi_0+\psi^*,|\nabla\vphi_0+\nabla\psi^*|^2))
-|\nabla\vphi_0+\nabla\psi^*|^2\ge \nu_1
\end{align}
for a constant $\nu_1>0$.
\end{proposition}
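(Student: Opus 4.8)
The plan is to produce $(\psi^*,\Psi^*)$ as the unique fixed point of the iteration mapping $\mathfrak{I}$ in $\mcl{K}_M$ via the contraction mapping principle, with $M$ and $\sigma_1$ chosen, in that order, to depend only on the data and $\alp$. Concretely the proof splits into: (i) $\mathfrak{I}$ is well defined on $\mcl{K}_M$ (this uses the linear theory of Section \ref{section-3}); (ii) $\mathfrak{I}(\mcl{K}_M)\subset\mcl{K}_M$; (iii) $\mathfrak{I}$ is a contraction in a weaker norm in which $\mcl{K}_M$ is complete; and (iv) the subsonicity \eqref{subsonicity} is read off from the smallness of the fixed point. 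First I would note that, for $(\tpsi,\tPsi)\in\mcl{K}_M$ with $M\sigma\le 2\min\{\delta_1,\delta_2\}$, the data ${\bf F}(\rx,\tPsi,D\tpsi)$, $f(\rx,\tPsi,D\tpsi)$, $g(\rx,\nabla\tpsi,\pex,\Psi_{ex})$ are well defined and, by their mean-value forms \eqref{3-a5}, \eqref{3-a6}, \eqref{3-g8} together with Lemma \ref{lemma-3-4} and Lemma \ref{lemma-3-5}, belong to the relevant weighted H\"older spaces. The linear problem \eqref{3-g2} with boundary conditions \eqref{3-f6} (for $(\hat\psi,\hat\Psi)$) and \eqref{3-g4} is then precisely of the type handled in Section \ref{section-3}: a uniformly elliptic second order system (Lemma \ref{lemma-3-1}(a)) carrying the special structure \eqref{3-b2} (Lemma \ref{lemma-3-1}(c)), with mixed Dirichlet/conormal conditions (recall \eqref{conormal-1}--\eqref{conormal-2}) on the Lipschitz domain $\N$; the compatibility conditions \eqref{compatibility} force $\der_{{\bf n}_w}\Psi_{en}=0$ on $\ol\Gam_0\cap\ol\Gam_w$ and $\der_{{\bf n}_w}\Psi_{ex}=0$ on $\ol\Gam_L\cap\ol\Gam_w$, which is what secures the weighted $C^{2,\alp}$ regularity up to the edges. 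Invoking the results of Section \ref{section-3}, this problem has a unique solution $(\hat\psi,\hat\Psi)\in[C^{2,\alp}_{(-1-\alp,\corners)}(\N)]^2$, so $\mathfrak{I}$ is defined, and it obeys an a priori estimate of the form
\[\|\hat\psi\|_{2,\alp,\N}^{(-1-\alp,\corners)}+\|\hat\Psi\|_{2,\alp,\N}^{(-1-\alp,\corners)}\le C\Bigl(\|{\bf F}(\cdot,\tPsi,D\tpsi)\|+\|f(\cdot,\tPsi,D\tpsi)\|+\|b-b_0\|_{\alp,\N}+\|\Psi_{en}\|_{2,\alp,\Gam_0}+\|\Psi_{ex}\|_{2,\alp,\Gam_L}+\|g\|\Bigr),\]
with $C$ depending only on the data and $\alp$ and the norms on the right as in Section \ref{section-3}.

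For the self-mapping step, Lemma \ref{lemma-3-4}(a) gives $\|{\bf F}(\cdot,\tPsi,D\tpsi)\|+\|f(\cdot,\tPsi,D\tpsi)\|\le C(M\sigma)^2$ since all arguments are $O(M\sigma)$ on $\mcl{K}_M$; Lemma \ref{lemma-3-5}(a) and \eqref{data} give $\|g\|\le C(\sigma+(M\sigma)^2)$; and $\|b-b_0\|_{\alp,\N}+\|\Psi_{en}\|_{2,\alp,\Gam_0}+\|\Psi_{ex}\|_{2,\alp,\Gam_L}\le C\sigma$ by \eqref{data}. Plugging these in yields $\|\hat\psi\|_{2,\alp,\N}^{(-1-\alp,\corners)}+\|\hat\Psi\|_{2,\alp,\N}^{(-1-\alp,\corners)}\le C_*(\sigma+(M\sigma)^2)$ for a constant $C_*$ depending only on the data and $\alp$. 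Now fix $M:=4C_*$, and then choose $\sigma_1\in(0,1)$ small enough that $C_*M\sigma_1\le\tfrac14$, that $M\sigma_1\le 2\min\{\delta_1,\delta_2\}$, and that $4\sigma_1\le\min\{\delta_1,\delta_2\}$ (so that Lemmas \ref{lemma-3-4} and \ref{lemma-3-5} apply). Then for $\sigma\le\sigma_1$ one gets $\|\hat\psi\|+\|\hat\Psi\|\le C_*\sigma+\tfrac14 M\sigma=\tfrac M4\sigma+\tfrac M4\sigma=\tfrac M2\sigma$ in the weighted norm; combined with $\hat\psi=0$ on $\Gam_0$ this shows $\mathfrak{I}(\mcl{K}_M)\subset\mcl{K}_M$.

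For the contraction, take $(\tpsi_i,\tPsi_i)\in\mcl{K}_M$ and set $(\hat\psi_i,\hat\Psi_i)=\mathfrak{I}(\tpsi_i,\tPsi_i)$, $i=1,2$. Since the Dirichlet data $\Psi_{en},\Psi_{ex}$ in \eqref{3-f6} do not depend on the iterate, the difference $(\hat\psi_1-\hat\psi_2,\hat\Psi_1-\hat\Psi_2)$ solves the same linear elliptic system with homogeneous Dirichlet data on $\Gamend$, homogeneous conormal data on $\Gam_w$, divergence-form right-hand side from ${\bf F}(\cdot,\tPsi_1,D\tpsi_1)-{\bf F}(\cdot,\tPsi_2,D\tpsi_2)$ and $f(\cdot,\tPsi_1,D\tpsi_1)-f(\cdot,\tPsi_2,D\tpsi_2)$, and conormal inhomogeneity on $\Gam_L$ from $g(\cdot,\nabla\tpsi_1,\pex,\Psi_{ex})-g(\cdot,\nabla\tpsi_2,\pex,\Psi_{ex})$. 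By the mean-value forms \eqref{3-a5}, \eqref{3-a6}, \eqref{3-g8} and the bounds of Lemmas \ref{lemma-3-4}, \ref{lemma-3-5}, on $\mcl{K}_M$ these differences are bounded in $L^2(\N)$, resp.\ $L^2(\Gam_L)$, by $CM\sigma\bigl(\|\tpsi_1-\tpsi_2\|_{H^1(\N)}+\|\tPsi_1-\tPsi_2\|_{H^1(\N)}\bigr)$. Applying the $H^1$ a priori estimate for the linear system from Section \ref{section-3} — the estimate that uses the divergence structure together with the identity \eqref{3-b2} — gives
\[\|\hat\psi_1-\hat\psi_2\|_{H^1(\N)}+\|\hat\Psi_1-\hat\Psi_2\|_{H^1(\N)}\le CM\sigma\bigl(\|\tpsi_1-\tpsi_2\|_{H^1(\N)}+\|\tPsi_1-\tPsi_2\|_{H^1(\N)}\bigr).\]
Shrinking $\sigma_1$ once more so that $CM\sigma_1<1$, $\mathfrak{I}$ is a contraction on $\mcl{K}_M$ in the $H^1(\N)\times H^1(\N)$ metric. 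Since $\mcl{K}_M$ is bounded in $[C^{2,\alp}_{(-1-\alp,\corners)}(\N)]^2$, it is precompact in $[C^1(\ol\N)]^2$ by the compact embedding of weighted H\"older spaces, and the defining constraints ($\tpsi=0$ on $\Gam_0$ and the supremum-type weighted bound $\le\tfrac M2\sigma$) pass to $C^1(\ol\N)$ limits by lower semicontinuity of suprema; hence $(\mcl{K}_M,\|\cdot\|_{H^1})$ is complete, and the contraction mapping principle produces a unique $(\psi^*,\Psi^*)\in\mcl{K}_M$ with $\mathfrak{I}(\psi^*,\Psi^*)=(\psi^*,\Psi^*)$. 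Finally, since the weight exponent $-1-\alp$ is negative, $\|\psi^*\|_{1,0,\N}+\|\Psi^*\|_{1,0,\N}\le\|\psi^*\|_{2,\alp,\N}^{(-1-\alp,\corners)}+\|\Psi^*\|_{2,\alp,\N}^{(-1-\alp,\corners)}\le\tfrac M2\sigma$, so $\|\nabla\psi^*\|_{C^0(\ol\N)}+\|\Psi^*\|_{C^0(\ol\N)}$ is as small as we wish; by \eqref{background-sol-prop} and continuity of $\rho(\cdot,\cdot)$ and $p'(\cdot)$, shrinking $\sigma_1$ a last time gives $\rho(\Phi_0+\psi^*,|\nabla\vphi_0+\nabla\psi^*|^2)\ge\tfrac12\nu_0>0$ and $p'(\rho(\Phi_0+\psi^*,|\nabla\vphi_0+\nabla\psi^*|^2))-|\nabla\vphi_0+\nabla\psi^*|^2\ge\tfrac12\nu_0=:\nu_1$, which is \eqref{subsonicity}.

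The main obstacle is the two-norm bookkeeping rather than any new analytic difficulty: the self-mapping closes only in the strong weighted $C^{2,\alp}$ norm (dictated by the regularity of $b$, of $\Psi_{en},\Psi_{ex}$, and of the nonlinear terms ${\bf F},f,g$, and by the Lipschitz corners of $\N$), whereas a genuine contraction can only be obtained in the weaker $H^1$ norm, so one must verify that $\mcl{K}_M$ is complete in that weaker metric and choose $M$ (from the data and $\alp$) before $\sigma_1$ (from $M$, hence the data). The truly hard input — unique solvability and the $H^1$, $C^{1,\alp}$ and weighted $C^{2,\alp}$ estimates for the mixed-boundary elliptic system on a Lipschitz domain — is exactly what Section \ref{section-3} supplies, and is used here as a black box.
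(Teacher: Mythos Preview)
Your self-mapping step and the overall fixed-point scheme match the paper's argument. The genuine divergence is in the contraction step, and there your choice of the $H^1\times H^1$ metric does not close.

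The problem is that both $g(\rx,\nabla\tpsi,\pex,\Psi_{ex})$ on $\Gam_L$ and the boundary contribution $\int_{\der\N\setminus\Gam_0}({\bf F}\cdot{\bf n}_{out})\xi\,dS$ built into the weak formulation \eqref{3-c3} depend on the trace of $D\tpsi$ on $\der\N$. Concretely, from \eqref{3-g8} the difference $g^{(1)}-g^{(2)}$ is controlled pointwise by $CM\sigma\,|D\tpsi_1-D\tpsi_2|$ on $\Gam_L$, and from \eqref{3-a5} the difference ${\bf F}^{(1)}-{\bf F}^{(2)}$ restricted to $\Gam_w\cup\Gam_L$ likewise involves $D\tpsi_1-D\tpsi_2$ there. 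Neither $\|D\tpsi_1-D\tpsi_2\|_{L^2(\Gam_L)}$ nor $\|D\tpsi_1-D\tpsi_2\|_{L^2(\Gam_w)}$ is bounded by $\|\tpsi_1-\tpsi_2\|_{H^1(\N)}$ (that would require control of $\tpsi_1-\tpsi_2$ in $H^{3/2}$), so the inequality you write for $\|\hat\psi_1-\hat\psi_2\|_{H^1}+\|\hat\Psi_1-\hat\Psi_2\|_{H^1}$ does not follow from the $H^1$ estimate of Lemma~\ref{lemma-3-2}. Note also that elements of $\mcl{K}_M$ are not required to satisfy $\der_{{\bf n}_w}\tpsi=0$ on $\Gam_w$, so the $\Gam_w$ boundary term does not vanish for free.

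The paper avoids this entirely by running the contraction directly in the strong norm $\|\cdot\|_{2,\alp,\N}^{(-1-\alp,\corners)}$: Lemmas~\ref{lemma-3-4}(a) and \ref{lemma-3-5}(a) bound $\|{\bf F}^{(1)}-{\bf F}^{(2)}\|_{1,\alp,\N}^{(-\alp,\corners)}+\|f^{(1)}-f^{(2)}\|_{\alp,\N}+\|g^{(1)}-g^{(2)}\|_{\alp,\Gam_L}$ by $C(1+M)\sigma$ times the strong-norm difference of the inputs, and then Corollary~\ref{corollary-est} converts this into the same strong-norm bound for $(\hat\psi_1-\hat\psi_2,\hat\Psi_1-\hat\Psi_2)$. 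Since $\mcl{K}_M$ is a closed (hence complete) subset of the Banach space $[C^{2,\alp}_{(-1-\alp,\corners)}(\N)]^2$, no two-norm argument is needed at all; one simply takes $\sigma_1$ small enough that $C^{\natural}(1+M)\sigma_1<1$. Replacing your $H^1$ contraction by this strong-norm contraction repairs the argument and in fact simplifies it.
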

Theorem \ref{theorem1} easily follows from Proposition \ref{theorem2} by choosing $\bar{\sigma}=\sigma_1$. So the rest of the paper is devoted to prove Proposition \ref{theorem2}.

\section{Gradient estimates for elliptic system and mixed boundary value problem}
The following a priori estimate is essential  to prove Proposition \ref{theorem2}.
\label{section-3}
\begin{proposition}
\label{proposition-3-1}
Fix $\alp\in(0,1)$ and functions ${\bf F}\in C^{1,\alp}_{(-\alp,\Gam_0\cup\Gam_L)}(\N,\mathbb{R}^n)$, $f\in C^{\alp}(\ol{\N})$, $W_{en}\in C^{2,\alp}(\ol{\Gam_0})$, $W_{ex}\in C^{2,\alp}(\ol{\Gam_L})$ and a function $g\in C^{\alp}(\ol{\Gam_L})$ satisfying the following properties:
\begin{align}
\label{F-condition}
&{\bf F}\cdot \hat {\bf e}_j=0\quad\tx{on}\quad\Gam_0\;\;\tx{for}\;\;j=1,\cdots,n-1;\\
\label{normalization}
&\|{\bf F}\|_{1,\alp,\N}^{(-\alp,\Gam_0\cup\Gam_L)}+\| f\|_{\alp,\N}+\|g\|_{\alp,\Gam_L}
+\|W_{en}\|_{2,\alp,\Gam_0}+\|W_{ex}\|_{2,\alp,\Gam_L}\le 1
\end{align}
where $\hat{\bf e}_j$ denotes the constant vector $(0,\cdots,0,\underset{j^{th}}{\underbrace{1}},0,\cdots,0)$ for each $j=1,\cdots,n-1$;
\begin{equation}
\label{compatibility-linear}
\der_{{\bf n}_w}W_{en}=0\quad\tx{on}\;\;\ol{\Gam_0}\cap\ol{\Gam_w},\quad
\der_{{\bf n}_w}W_{ex}=0\quad\tx{on}\;\;\ol{\Gam_L}\cap\ol{\Gam_w}.
\end{equation}

Then the linear boundary value problem
\begin{align}
\label{lbvp-1}
&\begin{cases}
L_1(v,W)=\Div {\bf F}\\
L_2(v,W)=f
\end{cases} \;\;\tx{in}\;\;\N\\
\label{lbvp-2}
&v=0\;\;\tx{on}\;\;\Gam_0,\quad
\der_{{\bf n}_w}v=0\;\;\tx{on}\;\;\Gam_w,\quad
B_{\q}(\Phi_0,\nabla\psi_0)\cdot \nabla v=g\;\;\tx{on}\;\;\Gam_L\\
\label{lbvp-3}
&W=\begin{cases}
W_{en}&\tx{on}\;\;\Gam_0\\
W_{ex}&\tx{on}\;\;\Gam_{L}
\end{cases},\quad \der_{{\bf n}_w}W=0\;\;\tx{on}\;\;\Gam_w
\end{align}
has a unique solution $(v,W)\in [C^{2,\alp}_{(-1-\alp,\Gam_0\cup\Gam_L)}(\N)]^2$ satisfying the estimate
\begin{equation}
\label{3-b9}
\begin{split}
&\|v\|_{2,\alp,\N}^{(-1-\alp,\Gam_0\cup\Gam_L)}
+\|W\|_{2,\alp,\N}^{(-1-\alp,\Gam_0\cup\Gam_L)}\le C^{\sharp},
\end{split}
\end{equation}
where the constant $C^{\sharp}$ depends only on the data and $\alp$.
\end{proposition}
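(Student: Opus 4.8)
The plan is to prove, in order: (i) an $H^1$ a priori estimate and unique weak solvability; (ii) $C^{\bar\alp}(\ol\N)$ regularity for some $\bar\alp\in(0,1)$; (iii) $C^{1,\alp}(\ol\N)$ regularity together with interior $C^{2,\alp}$ regularity; (iv) the weighted bound \eqref{3-b9}; uniqueness then follows from (i). First reduce to homogeneous Dirichlet data for $W$: by the compatibility conditions \eqref{compatibility-linear} there is an extension $\Theta\in C^{2,\alp}(\ol\N)$ with $\Theta=W_{en}$ on $\Gam_0$, $\Theta=W_{ex}$ on $\Gam_L$, $\der_{{\bf n}_w}\Theta=0$ on $\Gam_w$, and $\|\Theta\|_{2,\alp,\N}\le C(\|W_{en}\|_{2,\alp,\Gam_0}+\|W_{ex}\|_{2,\alp,\Gam_L})$. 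Writing $W=\Theta+\til W$ turns \eqref{lbvp-1}--\eqref{lbvp-3} into a problem of the same form for $(v,\til W)$ with $\til W=0$ on $\corners$, with ${\bf F}$ replaced by ${\bf F}-\Theta\,\der_z{\bf A}(\Phi_0,D\vphi_0)$ (here $\der_{\q}B=-\der_z{\bf A}$ from Lemma \ref{lemma-3-1}(c), so the $W$-term in $L_1$ is exactly $\Div(W\,\der_z{\bf A})$), with $f$ replaced by $f-\Delta\Theta+\der_zB(\Phi_0,D\vphi_0)\Theta$, and with $g$ and \eqref{F-condition} unchanged; the new source terms remain bounded as in \eqref{normalization} up to a fixed constant. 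Hence one may assume $W_{en}=W_{ex}=0$, and I keep the names ${\bf F},f$ for the modified sources.

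\noindent\textbf{The $H^1$ estimate --- the key step.} Let $\mcl H:=\{(v,W)\in[H^1(\N)]^2:\ v=0\ \tx{on}\ \Gam_0,\ W=0\ \tx{on}\ \corners\}$ (traces). Testing $L_1(v,W)=\Div{\bf F}$ against $\zeta$ and $-L_2(v,W)=-f$ against $\eta$ for $(\zeta,\eta)\in\mcl H$, and using the identity $\der_{\q}B(\Phi_0,D\vphi_0)=-\der_z{\bf A}(\Phi_0,D\vphi_0)$ of Lemma \ref{lemma-3-1}(c) to rewrite the coupling term of $L_2$ as $+\der_z{\bf A}\cdot\nabla v$, one is led to the bilinear form
\[
\mcl B\big[(v,W),(\zeta,\eta)\big]=\int_\N\Big([a_{ij}]\nabla v\cdot\nabla\zeta+W\,\der_z{\bf A}\cdot\nabla\zeta+\nabla W\cdot\nabla\eta+(\der_zB)\,W\eta-(\der_z{\bf A}\cdot\nabla v)\,\eta\Big)\,dx,
\]
with $[a_{ij}],\der_z{\bf A},\der_zB$ evaluated at $(\Phi_0,D\vphi_0)$; the conormal condition $B_{\q}(\Phi_0,\nabla\vphi_0)\cdot\nabla v=g$ on $\Gam_L$, the homogeneous conormal conditions on $\Gam_w$, and ${\bf F},f$ enter only through the right-hand linear functional, whose norm on $\mcl H$ is $\le C$ by \eqref{normalization} and the trace theorem (the would-be bilinear boundary term, proportional to $\int_{\Gam_L}W\zeta\,dS$, vanishes because $W=0$ on $\Gam_L$ in $\mcl H$). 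The crucial point is that the two coupling terms cancel when $(\zeta,\eta)=(v,W)$:
\[
\mcl B\big[(v,W),(v,W)\big]=\int_\N[a_{ij}]\nabla v\cdot\nabla v\,dx+\int_\N|\nabla W|^2\,dx+\int_\N(\der_zB)\,W^2\,dx.
\]
By \eqref{3-b1} the first integral is $\ge\lambda\|\nabla v\|_{L^2(\N)}^2$; $\der_zB=B/p'(B)>0$ on $\ol\N$ by \eqref{background-sol-prop} and boundedness of $B$; and the Poincar\'e inequality applies since $v=0$ on $\Gam_0$ and $W=0$ on $\corners$. Hence $\mcl B$ is coercive on $\mcl H$, and continuity of $\mcl B$ follows from Lemma \ref{lemma-3-1}(b) and the trace theorem; the Lax--Milgram theorem yields a unique weak solution $(v,W)\in\mcl H$ with $\|v\|_{H^1(\N)}+\|W\|_{H^1(\N)}\le C$. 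This cancellation --- which generally fails for second order elliptic systems and is precisely the special structure recorded in Lemma \ref{lemma-3-1}(c) --- is the heart of the argument; without it one could not even assert coercivity of the system.

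\noindent\textbf{From $H^1$ to $C^{1,\alp}$ up to $\der\N$.} With the $H^1$ bound, decouple the two scalar equations, treating the other unknown as a given function: $v$ solves $\Div([a_{ij}]\nabla v)=\Div({\bf F}-W\,\der_z{\bf A})$ and $W$ solves $\Delta W=(\der_zB)W+\der_{\q}B\cdot\nabla v+f$, both uniformly elliptic with smooth coefficients (Lemma \ref{lemma-3-1}(a),(b)) and right-hand sides controlled in $L^2(\N)$, with homogeneous Dirichlet data on the relevant flat faces and conormal data ($g\in C^{\alp}(\ol{\Gam_L})$, or homogeneous on $\Gam_w$). De Giorgi--Nash--Moser estimates --- interior, and up to the Dirichlet and conormal parts of the Lipschitz boundary --- together with a routine iteration raising the integrability of the lower-order terms, give $(v,W)\in[C^{\bar\alp}(\ol\N)]^2$ with norm $\le C$ for some $\bar\alp\in(0,1)$. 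Bootstrapping in H\"older spaces through the two equations, using interior and boundary Schauder estimates (divergence-form for $v$, non-divergence for $W$) and the regularity of ${\bf F},f,g,W_{en},W_{ex}$ from \eqref{normalization}, raises this to $v\in C^{1,\alp}$ and $W\in C^{2,\alp}$ up to $\Gam_0,\Gam_L,\Gam_w$, and to $C^{2,\alp}$ regularity of both in the interior of $\N$; here \eqref{F-condition} is what makes the homogeneous Dirichlet condition for $v$ on $\Gam_0$ compatible with the source ${\bf F}$. It remains to reach the Lipschitz edges $\ol{\Gam_0}\cap\ol{\Gam_w}$ and $\ol{\Gam_L}\cap\ol{\Gam_w}$, where a Dirichlet face meets a conormal face at a right angle: flattening $\der\Lambda$ near $\Gam_w$ and using reflection/barrier arguments that exploit the diagonal structure of $[a_{ij}]$ (Lemma \ref{lemma-3-1}(a)) and the right angle --- at which the leading corner exponent of the model problem is $\ge 1$, so no genuine corner singularity is forced --- one obtains $C^{1,\alp}$ regularity of $(v,W)$ up to these edges.

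\noindent\textbf{Weighted $C^{2,\alp}$ bound and conclusion.} Combining interior Schauder estimates rescaled at distance $\delta_\rx=\mathrm{dist}(\rx,\corners)$ with the boundary Schauder estimates on $\Gam_w$ and on $\Gam_0,\Gam_L$ away from the edges and the $C^{1,\alp}$ bounds up to the edges, one controls the second derivatives of $(v,W)$ near $\corners$ --- where ${\bf F}$ and $g$ have only the degenerate (weighted) regularity of \eqref{normalization} and where only $C^{1,\alp}$ holds up to the edges --- of order $\mathrm{dist}(\cdot,\corners)^{-(1-\alp)}$; unwinding the definition of the weighted norms then gives $\|v\|_{2,\alp,\N}^{(-1-\alp,\corners)}+\|W\|_{2,\alp,\N}^{(-1-\alp,\corners)}\le C^{\sharp}$, and undoing the reduction (replacing $W$ by $W+\Theta$) preserves this; this is \eqref{3-b9}. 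For uniqueness, the difference of two solutions in $[C^{2,\alp}_{(-1-\alp,\corners)}(\N)]^2$ solves the homogeneous problem, so the $H^1$ estimate of the second step forces it to vanish. The main obstacle in this scheme is precisely the regularity theory at the Lipschitz edges, handled by flattening and reflection using the diagonal structure of $[a_{ij}]$ and the right-angled product geometry of $\N$; the genuinely new ingredient --- $H^1$ coercivity of the coupled system, generally false for elliptic systems --- is delivered directly by the identity $\der_z{\bf A}+\der_{\q}B=0$ of Lemma \ref{lemma-3-1}(c).
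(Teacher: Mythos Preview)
Your proposal is correct and follows the same overall architecture as the paper: reduce to homogeneous Dirichlet data for $W$; obtain the $H^1$ estimate via Lax--Milgram, with coercivity coming precisely from the cancellation $\partial_z\mathbf{A}+\partial_{\mathbf q}B=0$ of Lemma~\ref{lemma-3-1}(c); bootstrap to $C^{1,\alpha}$ by decoupling the two scalar equations and using reflections; and finish with a scaling argument for the weighted $C^{2,\alpha}$ bound. The one substantive difference is at the $C^{\alpha}$ stage. The paper (Lemma~\ref{lemma-3-3}) does \emph{not} decouple there: it runs a Campanato--Morrey argument on the full system, freezing coefficients on small balls $\mathcal D_r(\rx_0)$, comparing $(\tilde v,\tilde W)$ with solutions $(w_1,w_2)$ of decoupled constant-coefficient problems, and deriving the growth estimate $\int_{\mathcal D_r}(|D\tilde v|^2+|D\tilde W|^2)\,d\rx\le C r^{n-2+2\bar\alpha}$ directly, uniformly up to the corners. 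Your route --- decouple first and apply scalar De~Giorgi--Nash--Moser with an $L^p$ bootstrap on the coupling terms $W\,\partial_z\mathbf A$ and $\partial_{\mathbf q}B\cdot\nabla v$ --- also works, but the iteration is less ``routine'' than you suggest near the Lipschitz edges and when $n\ge 4$ (where $\nabla v\in L^2$ is below the critical exponent for the $W$ equation), whereas the paper's Campanato approach handles all boundary pieces in one pass. For the $C^{1,\alpha}$ step, the paper's specific reflections (odd about $\Gamma_0$, which is exactly where \eqref{F-condition} is used; even about $\Gamma_L$ after subtracting a lift $v_{\mathfrak h}$ of the conormal datum $g$) are what your reflection remark anticipates; no barrier arguments are needed.
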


\begin{corollary}
\label{corollary-est}
If the condition \eqref{normalization} is dropped in Proposition \ref{proposition-3-1}, then the linear boundary value problem of \eqref{lbvp-1}--\eqref{lbvp-3} has a unique solution $(v,W)\in [C^{2,\alp}_{(-1-\alp,\Gam_0\cup\Gam_L)}(\N)]^2$ with satisfying the estimate
\begin{equation}
\label{3-b92}
\begin{split}
&\|v\|_{2,\alp,\N}^{(-1-\alp,\Gam_0\cup\Gam_L)}
+\|W\|_{2,\alp,\N}^{(-1-\alp,\Gam_0\cup\Gam_L)}\\
&\le C^{\sharp}(\|{\bf F}\|_{1,\alp,\N}^{(-\alp,\Gam_0\cup\Gam_L)}+\| f\|_{\alp,\N}+\|g\|_{\alp,\Gam_L}
+\|W_{en}\|_{2,\alp,\Gam_0}+\|W_{ex}\|_{2,\alp,\Gam_L})
\end{split}
\end{equation}
for the constant $C^{\sharp}$ same as in Proposition \ref{proposition-3-1}.
\begin{proof}
Set $m_0:=\|{\bf F}\|_{1,\alp,\N}^{(-\alp,\Gam_0\cup\Gam_L)}+\| f\|_{\alp,\N}+\|g\|_{\alp,\Gam_L}
+\|W_{en}\|_{2,\alp,\Gam_0}+\|W_{ex}\|_{2,\alp,\Gam_L}.$ Then \eqref{3-b92} follows by applying Proposition \ref{proposition-3-1} to $(\frac{v}{m_0},\frac{W}{m_0})$.
\end{proof}
\end{corollary}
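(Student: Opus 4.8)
The plan is to deduce Corollary \ref{corollary-est} from Proposition \ref{proposition-3-1} by a one-step rescaling, using only that the operators $L_1,L_2$ in \eqref{3-a3}--\eqref{3-a4} and every boundary operator appearing in \eqref{lbvp-2}--\eqref{lbvp-3} are linear and homogeneous in the unknown $(v,W)$ and in the data tuple $(\mathbf{F},f,g,W_{en},W_{ex})$. First I would introduce the quantity
\[
m_0:=\|{\bf F}\|_{1,\alp,\N}^{(-\alp,\corners)}+\| f\|_{\alp,\N}+\|g\|_{\alp,\Gam_L}
+\|W_{en}\|_{2,\alp,\Gam_0}+\|W_{ex}\|_{2,\alp,\Gam_L},
\]
and separate the degenerate case: if $m_0=0$, all of $\mathbf{F},f,g,W_{en},W_{ex}$ vanish identically, so the uniqueness assertion of Proposition \ref{proposition-3-1} applied with zero data forces $(v,W)\equiv(0,0)$, and \eqref{3-b92} holds trivially. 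Hence it remains to treat $m_0>0$.

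Assuming $m_0>0$, I would pass to the rescaled data $\tilde{\mathbf{F}}:={\mathbf{F}}/{m_0}$, $\tilde f:={f}/{m_0}$, $\tilde g:={g}/{m_0}$, $\tilde W_{en}:={W_{en}}/{m_0}$, $\tilde W_{ex}:={W_{ex}}/{m_0}$. Since all the (weighted) H\"older norms in play are positively homogeneous of degree one, the normalization \eqref{normalization} is then satisfied, in fact with equality. The structural condition \eqref{F-condition} ($\mathbf{F}\cdot\hat{\mathbf{e}}_j=0$ on $\Gam_0$) and the compatibility conditions \eqref{compatibility-linear} are homogeneous of degree one in the data, hence are preserved upon division by the positive scalar $m_0$. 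Therefore Proposition \ref{proposition-3-1} applies verbatim to the rescaled data and produces a unique $(\tilde v,\tilde W)\in[C^{2,\alp}_{(-1-\alp,\corners)}(\N)]^2$ solving \eqref{lbvp-1}--\eqref{lbvp-3} with data $(\tilde{\mathbf{F}},\tilde f,\tilde g,\tilde W_{en},\tilde W_{ex})$ and obeying $\|\tilde v\|_{2,\alp,\N}^{(-1-\alp,\corners)}+\|\tilde W\|_{2,\alp,\N}^{(-1-\alp,\corners)}\le C^{\sharp}$ with the same constant $C^{\sharp}$.

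Finally I would transport the conclusion back by linearity. Because $L_1,L_2$ and the boundary conditions $v=0$ on $\Gam_0$, $\der_{{\bf n}_w}v=0$ on $\Gam_w$, $B_{\q}(\Phi_0,\nabla\vphi_0)\cdot\nabla v=g$ on $\Gam_L$, $W=W_{en/ex}$ on $\Gam_{0/L}$, $\der_{{\bf n}_w}W=0$ on $\Gam_w$ are all linear, the pair $(v,W):=(m_0\tilde v,\,m_0\tilde W)$ solves the original problem \eqref{lbvp-1}--\eqref{lbvp-3} with the given (unnormalized) data; conversely, any solution of the original problem divided by $m_0$ solves the normalized one, so the uniqueness statement transfers as well. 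Multiplying the normalized estimate through by $m_0$ yields exactly \eqref{3-b92}. There is no genuine obstacle in this argument --- the only point that needs a moment's attention is the homogeneity check for conditions \eqref{F-condition} and \eqref{compatibility-linear} and the handling of the degenerate case $m_0=0$, both of which are immediate.
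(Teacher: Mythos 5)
Your argument is correct and is essentially the same one-line rescaling used in the paper, just written out with the degenerate case $m_0=0$ and the homogeneity checks for \eqref{F-condition} and \eqref{compatibility-linear} made explicit.
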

In order to prove Proposition \ref{proposition-3-1}, first of all, we achieve uniform $H^1$ estimate of $(v,W)$ under the assumption of \eqref{normalization}, then we combine the uniform $H^1$ estimate with $L^2$ integral growth estimate of $(Dv,DW)$ to get uniform $C^{\alp}$ estimate of $(v,W)$ in $\N$. $H^1$ and $C^{\alp}$ estimate of $(v,W)$ are obtained through estimates of weak solutions to the linear elliptic system \eqref{lbvp-1}. For uniform estimate of $(v,W)$ in $C^{1,\alp}$-norms or higher, we apply a priori $C^{\alp}$ estimate of weak solutions and use individual elliptic equations of the system \eqref{lbvp-1} in the correct order.

\subsection{$H^1$ estimates}
\label{section3-2}
Define
\begin{equation*}
\mH_1:=\{\xi\in H^1(\N):\xi=0\;\;\tx{on}\;\;\Gam_0\},\;\;
\mH_2:=\{\eta\in H^1(\N):\eta=0\;\;\tx{on}\;\;\Gam_0\cup\Gam_L\}.
\end{equation*}
Suppose that $(v,W)\in[C^1(\ol{\N})\cap C^2(\N)]^2$ solve \eqref{lbvp-1}--\eqref{lbvp-3} then, for any $(\xi,\eta)\in\mH_1\times \mH_2$, we have
\begin{equation}
\label{3-b7}
\int_{\N}L_1(v,W)\xi+L_2(v,W)\eta\;d\rx
=\int_{\N}(\Div{{\bf F}})\xi+{f}\eta\;d\rx.
\end{equation}
For the fixed functions $W_{en}$ and $W_{ex}$ in \eqref{lbvp-3}, we define a function $W_{bd}$ in $\N$ by
\begin{equation}
\label{bd-functions}
W_{bd}(\rx)=(1-\frac{x_n}{L})W_{en}(\rx')+\frac{x_n}{L}W_{ex}(\rx')
\end{equation}
where $\rx'\in\Lambda$ for $\rx=(\rx',x_n)\in\N$. Then, for any $(\xi,\eta)\in\mH_1\times \mH_2$, $(\til v, \til W):=(v,W)-(0, W_{bd})\in \mH_1\times \mH_2$ satisfy
\begin{equation}
\label{3-c1}
\mcl{L}[(\til v, \til W),(\xi,\eta)]=\langle ({\bf F},f,g,W_{bd}),(\xi,\eta)\rangle
\end{equation}
where
\begin{equation}
\label{3-c2}
\begin{split}
&\mcl{L}[(\til v, \til W),(\xi,\eta)]:=\mcl{L}_1[(\til v, \til W),\xi]+\mcl{L}_2[(\til v, \til W),\eta],\\
&\mcl{L}_1[(\til v, \til W),\xi]:=\int_{\N}a_{ii}\der_i\til v\der_i\xi+\til W\der_z{\bf A}(\Psi_0,D\vphi_0)\cdot D\xi\;d\rx,\\
&\mcl{L}_2[(\til v, \til W),\eta]:=\int_{\N}D\til W\cdot D\eta
+(\til W\der_zB(\Psi_0,D\vphi_0)+\der_{\bf q}B(\Psi_0,D\vphi_0)\cdot D\til v)\eta\;d\rx,
\end{split}
\end{equation}
and
\begin{equation}
\label{3-c3}
\begin{split}
&\langle ({\bf F},f,g,W_{bd}),(\xi,\eta)\rangle
:=\underset{(=:I^{(1)})}{\underbrace{\langle({\bf F},f,g),(\xi,\eta)\rangle_1}}
+\underset{(=:I^{(2)})}{\underbrace{\langle W_{bd},(\xi,\eta)\rangle_2}}
\\
&I^{(1)}(\xi,\eta)=\int_{\N}({\bf F}\cdot D\xi-f\eta)\;d\rx-\int_{\der\N\setminus \Gam_0}({\bf F}\cdot{\bf n}_{out})\xi\;dS+\int_{\Gam_L}\frac{a_{nn}p'(B(\Phi_0,\nabla\vphi_0))g\xi}{J_0}\;dS\\
&I^{(2)}(\xi,\eta)=-\int_{\N}W_{bd}\der_z{\bf A}(\Psi_0,D\vphi_0)\cdot D\xi
+(W_{bd}\der_zB(\Psi_0,D\vphi_0)-\Delta W_{bd})\eta\;d\rx
\end{split}
\end{equation}
for unit outward normal vector ${\bf n}_{out}$ on $\der\N\setminus \Gam_0$. This can be directly checked by applying the integration by parts to \eqref{3-b7} and using \eqref{conormal-2}. In \eqref{3-c2} and hereafter, the Einstein summation convention is used. We note that the compatibility condition \eqref{compatibility-linear} implies $\der_{{\bf n}_w}W_{bd}=0$ on $\Gam_w$.
\begin{definition}
\label{def-weaksol}
We call $(v,W)$ a weak solution to \eqref{lbvp-1}--\eqref{lbvp-3} if $(\til v,\til W)=(v,W)-(0,W_{bd})\in \mcl{H}_1\times \mcl{H}_2$ satisfy \eqref{3-c1} for any $(\xi,\eta)\in \mcl{H}_1\times \mcl{H}_2$.
\end{definition}

\begin{lemma}
\label{lemma-3-2}
Under the same assumption as Proposition \ref{proposition-3-1}, the linear boundary value problem \eqref{lbvp-1}--\eqref{lbvp-3} has a unique weak solution $(v,W)\in [H^1(\N)]^2$, and $(\til v,\til W):=(v,W)-(0,W_{bd})$ satisfy
\begin{equation}
\label{3-c4}
\|\til v\|_{H^1(\N)}+\|\til W\|_{H^1(\N)}\le
C_H
\end{equation}
for a constant $C_H$ depending only on the data.
\begin{proof}
Note that $\mcl{H}:=\mcl{H}_1\times \mcl{H}_2$ is a Hilbert space equipped with an inner product $\langle \cdot,\cdot\rangle_{\mH}$ defined by
 \[
 \langle(\xi_1,\eta_1), (\xi_2,\eta_2)\rangle_{\mH}=\int_{\N}(D\xi_1\cdot D\xi_2+D\eta_1\cdot D\eta_2)\;d\rx.
 \]
 By Poincar\'{e} inequality, there exists a constant $\beta_0>0$ satisfying
\begin{equation}
\label{3-c5}
\beta_0(\|\xi\|^2_{H^1(\N)}+\|\eta\|^2_{H^1(\N)})\le \langle(\xi,\eta), (\xi,\eta)\rangle_{\mH}
\end{equation}
for any $(\xi,\eta)\in\mcl{H}$,
and such a constant $\beta_0$ can be chosen depending only on $n$ and $\N$.

Thanks to Lemma \ref{lemma-3-1}(c), we have $\eta\der_z{\bf A}(\Psi_0,D\vphi_0)\cdot D\xi+\eta\der_{\bf q}B(\Psi_0,D\vphi_0)\cdot D\xi=0$ for any $(\xi,\eta)\in \mcl{H}$. Then the definition of $\mcl{L}[\cdot,\cdot]$ in \eqref{3-c2} yields
\begin{equation}
\label{3-c6}
\mcl{L}[(\xi,\eta),(\xi,\eta)]=
\int_{\N}a_{ii}\der_i\xi\der_i\xi+|D\eta|^2
+\der_zB(\Phi_0,D\vphi_0)\eta^2\;d\rx.
\end{equation}
Combining \eqref{3-a2}, Lemma \ref{lemma-3-1}(a) and \eqref{background-sol-prop} together, we also get
\begin{equation}
\label{3-c7}
\mcl{L}[(\xi,\eta),(\xi,\eta)]\ge \lambda_0\int_{\N}|D\xi|^2+|D\eta|^2\;d\rx
\end{equation}
for $\lambda_0=\min(\lambda, 1)$ where $\lambda$ is the one appeared in \eqref{3-b1}.
On the right-hand side of \eqref{3-c1}, applying H\"{o}lder inequality, trace inequality and Poincar\'{e} inequality to $\langle ({\bf F},f,g,W_{bd}),(\xi,\eta)\rangle$, one can find a constant $C_{nh}>0$ depending only on $n$ and $\N$ to satisfy the estimates
\begin{equation}
\label{3-c8}
\begin{split}
&\big|\langle ({\bf F},f,g,W_{bd}),(\xi,\eta)\rangle\big|\le C_{nh} \left(\int_N|D\xi|^2+|D\eta|^2\;d\rx\right)^{1/2}.\\
\end{split}
\end{equation}
This can be directly checked from \eqref{3-c3}.
We note that $C_{nh}$ is independent of ${\bf F},f,g$ and $W_{bd}$ because of \eqref{normalization} and \eqref{bd-functions}.
Then, the Lax-Milgram theorem implies that there exists unique $(\til v,\til W)\in\mcl{H}$ satisfying \eqref{3-c1} for all $(\xi,\eta)\in \mcl{H}$.
Finally, substituting $(\xi,\eta)=(\til v,\til W)$ into \eqref{3-c1}, and applying \eqref{3-c7} and \eqref{3-c8} give
\begin{equation}
\label{h1-est}
\lambda_0\int_{\N}|D\til v|^2+|D\til W|^2\;d\rx\le C_{nh}\left(\int_{\N}|D\til v|^2+|D\til W|^2\;d\rx\right)^{1/2}.
\end{equation}
\eqref{3-c4} follows from applying H\"{o}lder inequality and Poincar\'{e} inequality to \eqref{h1-est}.
\end{proof}

\end{lemma}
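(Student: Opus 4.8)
The plan is to produce $(v,W)$ as the unique solution of the variational problem \eqref{3-c1} on the Hilbert space $\mcl{H}:=\mcl{H}_1\times\mcl{H}_2$ via the Lax--Milgram theorem, and then to read off the bound \eqref{3-c4} by inserting the solution itself as a test pair. First I would record that $\mcl{H}$ is a Hilbert space for the inner product $\langle(\xi_1,\eta_1),(\xi_2,\eta_2)\rangle=\int_{\N}(D\xi_1\cdot D\xi_2+D\eta_1\cdot D\eta_2)\,d\rx$: since every $\xi\in\mcl{H}_1$ vanishes on $\Gam_0$ and every $\eta\in\mcl{H}_2$ vanishes on $\Gam_0\cup\Gam_L$, the Poincar\'{e} inequality gives $\beta_0(\|\xi\|_{H^1(\N)}^2+\|\eta\|_{H^1(\N)}^2)\le\int_{\N}|D\xi|^2+|D\eta|^2\,d\rx$ for a constant $\beta_0=\beta_0(n,\N)>0$, so this norm is equivalent to the full $H^1\times H^1$ norm. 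The Dirichlet conditions in \eqref{lbvp-2}--\eqref{lbvp-3} are built into $\mcl{H}$ after subtracting $W_{bd}$, while the conditions on $\Gam_w$ and $\Gam_L$ are natural, which is exactly what the reduction to \eqref{3-c1} and Definition \ref{def-weaksol} encodes.

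Next I would verify the two structural hypotheses of Lax--Milgram. Boundedness of the bilinear form $\mcl{L}[\cdot,\cdot]$ of \eqref{3-c2} on $\mcl{H}\times\mcl{H}$ follows at once from the uniform $C^0(\ol{\N})$ bounds on the coefficients $a_{ii}$, $\der_z{\bf A}(\Phi_0,D\vphi_0)$, $\der_zB(\Phi_0,D\vphi_0)$, $\der_{\q}B(\Phi_0,D\vphi_0)$ furnished by Lemma \ref{lemma-3-1}(b) and \eqref{3-a2}. For the linear functional $(\xi,\eta)\mapsto\langle({\bf F},f,g,W_{bd}),(\xi,\eta)\rangle$ of \eqref{3-c3}, the interior terms are handled by H\"{o}lder's inequality and Poincar\'{e}, and the $\Gam_L$ boundary term carrying $g$ by the trace inequality; the essential point is that, thanks to the normalization \eqref{normalization} and the affine-in-$x_n$ form \eqref{bd-functions} of $W_{bd}$, one obtains $|\langle({\bf F},f,g,W_{bd}),(\xi,\eta)\rangle|\le C_{nh}\bigl(\int_{\N}|D\xi|^2+|D\eta|^2\,d\rx\bigr)^{1/2}$ with $C_{nh}$ depending only on $n,\N$, hence only on the data.

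The main point — and the step I expect to be the real obstacle — is coercivity of $\mcl{L}$, which for a general second-order elliptic system one has no right to expect: the first-order coupling terms $\eta\,\der_z{\bf A}(\Phi_0,D\vphi_0)\cdot D\xi$ (from $\mcl{L}_1$) and $(\der_{\q}B(\Phi_0,D\vphi_0)\cdot D\xi)\,\eta$ (from $\mcl{L}_2$) are of the same order as the gradient-square terms one needs to control. Here the special structure of the Euler--Poisson system is decisive: evaluated on the diagonal, these two contributions sum to $\eta\,\bigl(\der_z{\bf A}+\der_{\q}B\bigr)(\Phi_0,D\vphi_0)\cdot D\xi$, which vanishes identically by Lemma \ref{lemma-3-1}(c). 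What survives is $\mcl{L}[(\xi,\eta),(\xi,\eta)]=\int_{\N}a_{ii}\der_i\xi\der_i\xi+|D\eta|^2+\der_zB(\Phi_0,D\vphi_0)\eta^2\,d\rx$, and since $a_{ii}\ge\lambda$ by Lemma \ref{lemma-3-1}(a) with $\lambda$ as in \eqref{3-b1}, while $\der_zB=B/p'(B)>0$ on the background solution by \eqref{3-a2} and \eqref{background-sol-prop}, this is bounded below by $\lambda_0\int_{\N}|D\xi|^2+|D\eta|^2\,d\rx$ with $\lambda_0=\min(\lambda,1)$.

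With boundedness, coercivity and a bounded right-hand side established, Lax--Milgram yields a unique $(\til v,\til W)\in\mcl{H}$ satisfying \eqref{3-c1} for all $(\xi,\eta)\in\mcl{H}$; then $(v,W):=(\til v,\til W)+(0,W_{bd})$ is the unique weak solution in the sense of Definition \ref{def-weaksol}. Finally, taking $(\xi,\eta)=(\til v,\til W)$ in \eqref{3-c1} and combining the coercivity lower bound with the functional upper bound gives $\lambda_0\int_{\N}|D\til v|^2+|D\til W|^2\,d\rx\le C_{nh}\bigl(\int_{\N}|D\til v|^2+|D\til W|^2\,d\rx\bigr)^{1/2}$, so $\|D\til v\|_{L^2(\N)}+\|D\til W\|_{L^2(\N)}$ is bounded by a data-dependent constant; one more application of Poincar\'{e} promotes this to \eqref{3-c4}.
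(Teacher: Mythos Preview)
Your proposal is correct and follows essentially the same approach as the paper: set up $\mcl{H}=\mcl{H}_1\times\mcl{H}_2$ with the gradient inner product, use the cancellation identity of Lemma \ref{lemma-3-1}(c) to kill the cross terms and obtain coercivity, bound the right-hand side functional via H\"older/trace/Poincar\'e and the normalization \eqref{normalization}, apply Lax--Milgram, and test with the solution itself to get \eqref{3-c4}. You even make explicit the boundedness of $\mcl{L}$, which the paper uses implicitly.
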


\subsection{$C^{\alp}$ estimate}
\label{section3-3}

\begin{lemma}
\label{lemma-3-3}
Under the same assumption as Proposition \ref{proposition-3-1}, let $(\til v, \til W)$ be as in Lemma \ref{lemma-3-2}.
Then there exists $\bar{\alp}\in(0,1)$ depending only on the data so that, for any $\alp\in(0,\bar{\alp}]$, $(\til v, \til W)$ satisfy
\begin{equation}
\label{3-d1}
\|\til v\|_{\alp,\N}+\|\til W\|_{\alp,\N}\le
C_A
\end{equation}
where the constant $C_A$ is chosen depending only on the data and $\alp$.
\begin{proof}
Since $\N$ is a cylindrical domain with the cross-section $\Lambda$, there is a constant $R_0>0$ depending only on $n$ and $\Lambda$ such that for any $\rx_*\in \N$ and $r\in(0,R_0]$, the inequality
\begin{equation*}
\frac{1}{\zeta_0}\le \frac{\vol(B_r(\rx_*)\cap\mcl{N})}{\vol(B_r(\rx_*))}\le \zeta_0
\end{equation*}
holds for a uniform constant $\zeta_0>0$. So if there are constants $R_1\le R_0$ and $\kappa_0>0$ satisfying
\begin{equation}
\label{3-d2}
\int_{B_{r}(\rx_*)\cap\mcl{N}}|D\til v|^2+|D\til W|^2\;d\rx\le {\kappa_0}^2r^{n-2+2\alp}
\end{equation}
for all $\rx_*\in\N$ and $r\in(0,R_1]$, then it follows from \cite[Theorem 3.1]{Ha-L} that
\begin{equation*}
|\til v|_{\alp,\N}+|\til W|_{\alp,\N}\le C\left(\kappa_0+\|\til v\|_{L^2(\N)}+\|\til W\|_{L^2(\N)}\right)
\end{equation*}
for a constant $C$ depending only on $n,\alp,\Lambda$ and $L$.  Combining this with Lemma \ref{lemma-3-2} gives
\begin{equation*}
|\til v|_{\alp,\N}+|\til W|_{\alp,\N}\le C(\kappa_0+C_H),
\end{equation*}
and this estimate proves \eqref{3-d1}.
To obtain \eqref{3-d2}, we need to consider three cases: (i) $B_r(\rx)\subset\mcl{N}$, (ii) $\rx\in\der\N\setminus \mcl{C}$ for $\mcl{C}:=(\ol{\Gam_0}\cup\ol{\Gam_L})\cap \ol{\Gam_w},$ and $B_r(\rx)\cap  \mcl{C}=\emptyset$, (iii) $\rx\in \mcl{C}$. More general cases can be treated via these three cases. Also, cases (i) and (ii) are easier to handle than case (iii). So we treat only case (iii) here.

Fix $\rx_0\in \mcl{C}$. Without loss of generality, we assume $\rx_0\in \mcl{C}\cap \ol{\Gam_L}$. For a constant $r$ with $0<r\le \frac{1}{10}\min(L, \tx{diam}\; \Lambda, 1)$, set $\mcl{D}_{r}(\rx_0):=\N\cap B_{r}(\rx_0)$. Let $w_1$ and $w_2$ be weak solutions of the following problems:
\begin{equation*}
\begin{split}
&\begin{cases}
\der_i(\sum_{j=1}^na_{ij}(\rx_0)\der_jw_1)=0&\tx{in}\;\;\mcl{D}_{r}(\rx_0),\\
(\sum_{j=1}^na_{ij}(\rx_0)\der_jw_1)\cdot{\bf n}_{out}=0&\tx{on}\;\;\der \mcl{D}_{r}(\rx_0)\cap(\Gam_L\cup\Gam_w),\\
w_1=\til v&\tx{on}\;\;\der\mcl{D}_{r}(\rx_0)\cap\N,
\end{cases}
\end{split}
\end{equation*}
and
\begin{equation*}
\begin{split}
&\begin{cases}
\Delta w_2=0&\tx{in}\;\;\mcl{D}_{r}(\rx_0),\\
\der_{{\bf n}_{out}}w_2=0&\tx{on}\;\;\der \mcl{D}_{r}(\rx_0)\cap(\Gam_L\cup\Gam_w),\\
w_2=\til W&\tx{on}\;\;\der\mcl{D}_{r}(\rx_0)\cap\N,
\end{cases}
\end{split}
\end{equation*}
for unit outward normal vector ${\bf n}_{out}$ on $\der\mcl{D}_r\cap(\Gam_L\cup\Gam_w)$.
In other words, $(w_1,w_2)$ satisfy
\begin{equation}
\label{3-d3}
\int_{\mcl{D}_r(\rx_0)}a_{ij}(\rx_0)\der_jw_1\der_iz_1+Dw_2\cdot Dz_2\;d\rx=0
\end{equation}
for any $z_1,z_2\in H^1(\mcl{D}_r(x_0))\cap\{z=0\;\tx{on}\;\der\mcl{D}_r(x_0)\cap \N\}$. We extend $z_1$ and $z_2$ by setting $z_1=z_2=0$ in $\N\setminus \mcl{D}_r(x_0)$, and substitute $(\xi,\eta)=(z_1,z_2)$ into \eqref{3-c1}. Then subtracting \eqref{3-d3} from \eqref{3-c1} with $(\xi,\eta)=(z_1,z_2)$ gives
\begin{equation}
\label{3-d4}
\begin{split}
&\int_{\mcl{D}_r(\rx_0)}a_{ij}(\rx_0)\der_i(\til v-w_1)\der_jz_1+D(\til W-w_2)\cdot Dz_2\;d\rx\\
&\quad\quad \quad=\langle ({\bf F},f,g,W_{bd}),(z_1,z_2)\rangle -\int_{\mcl{D}_r(x_0)}\mcl{T}\;d\rx,
\end{split}
\end{equation}
where $\mcl{T}$ is defined by $\mcl{T}=\mcl{T}_1+\mcl{T}_2+\mcl{T}_3$ with
\begin{equation*}
\begin{split}
&\mcl{T}_1=(a_{ij}(\rx_0)-a_{ij}(\rx))\der_j\til v\der_iz_1,\quad \mcl{T}_2=\til W\der_z{\bf A}(\Psi_0,D\vphi_0)\cdot Dz_1,\\
&\mcl{T}_3=(\til W\der_zB(\Psi_0,D\vphi_0)+\der_{\bf q}B(\Psi_0,D\vphi_0)\cdot D\til v)z_2.
\end{split}
\end{equation*}
Set $(V_1,V_2):=(\til v,\til W)-(w_1,w_2)$. We substitute $(z_1,z_2)=(V_1,V_2)$ into \eqref{3-d4} and use \eqref{3-b1} to get
\begin{equation}
\label{3-d5}
\int_{\mcl{D}_r(\rx_0)}a_{ij}(\rx_0)\der_iV_1\der_jV_1+DV_2\cdot DV_2\;d\rx
\ge \lambda_0\int_{\mcl{D}_r(x_0)}|DV_1|^2+|DV_2|^2\;d\rx
\end{equation}
for $\lambda_0$ same as in \eqref{3-c7}. It remains to estimate the right-hand side of \eqref{3-d4}. Using Lemma \ref{lemma-3-1}(b) and H\"{o}lder inequality gives
\begin{equation}
\label{3-d6}
\begin{split}
\int_{\mcl{D}_r(\rx_0)}|\mcl{T}_1|\;d\rx
&\le\|a_{ij}\|_{C^1(\ol{N})}
\left(r^2\int_{\mcl{D}_r(\rx_0)}|D\til v|^2\;d\rx\right)^{1/2}\left(\int_{\mcl{D}_r(\rx_0)}|DV_1|^2 \;d\rx\right)^{1/2} \\
&\le \|a_{ij}\|_{C^1(\ol{N})}
r\|\til v\|_{H^1(\N)} \left(\int_{\mcl{D}_r(\rx_0)}|DV_1|^2 \;d\rx\right)^{1/2} .
\end{split}
\end{equation}
Prior to estimate of $\int_{\mcl{D}_r(\rx_0)}|\mcl{T}_2|\;d\rx$, it is necessary to take a closer look at $\int_{\mcl{D}_r(\rx_0)}|\til{W}|^2\;d\rx$. For any $p>2$, H\"{o}lder inequality gives
\begin{equation*}
\int_{\mcl{D}_r(\rx_0)}|\til W|^2\;d\rx\le C(n,p)r^{n(1-\frac 2p)}
\left(\int_{\mcl{D}_r(\rx_0)}|\til W|^p\right)^{\frac 2p}
\end{equation*}
where $C(n,p)$ is a constant depending only $n, \Lambda$ and $p$.
We choose $p=\frac{2n}{n-2}$ for $n>2$, and apply Sobolev inequality to get
\begin{equation}
\label{l2-est}
\int_{\mcl{D}_r(\rx_0)}|\til W|^2\;d\rx\le Cr^2\|\til W\|^2_{H^1(\N)}.
\end{equation}
from which it follows that
\begin{equation}
\label{3-d7}
\begin{split}
&\int_{\mcl{D}_r(\rx_0)}|\mcl{T}_2|\;d\rx\le C\mu_1
r\|\til W\|_{H^1(\N)}
\left(\int_{\mcl{D}_r(\rx_0)}|DV_1|^2\;d\rx\right)^{1/2}
\end{split}
\end{equation}
for $\mu_1=\|\der_z{\bf A}(\Psi_0,D\vphi_0)\|_{L^{\infty}(\N)}+\|\der_zB(\Psi_0,D\vphi_0)\|_{L^{\infty}(\N)}+\|\der_{\bf q}B(\Psi_0,D\vphi_0)\|_{L^{\infty}(\N)}$.
The following inequality is obtained by H\"{o}lder inequality and Poincar\'{e} inequality:
\begin{equation}
\label{3-d7n}
\begin{split}
&\int_{\mcl{D}_r(x_0)}|\mcl{T}_3|\;d\rx\le C\mu_1r\left(\|\til v\|_{H^1(\N)}+\|\til W\|_{H^1{\N}}\right)
\left(\int_{\mcl{D}_r(x_0)}|DV_2|^2\;d\rx\right)^{1/2}.
\end{split}
\end{equation}
From \eqref{3-c3}, the estimate $|\langle({\bf F}, f,g,W_{bd}, (V_1,V_2))\rangle|\le |I^{(1)}(V_1,V_2)|+|I^{(2)}(V_1,V_2)|$ follows, and
$
|I^{(1)}(V_1,V_2)|\le
|\mcl{J}_1|+|\mcl{J}_2|$ holds
where
\begin{equation*}
\begin{split}
&\mcl{J}_1=\int_{\mcl{D}_r(\rx_0)}({\bf F}\cdot DV_1- f V_2)\;d\rx,\\
&\mcl{J}_2=-\int_{\der\mcl{D}_r(\rx_0)\cap \der\N}({\bf F}\cdot{\bf n}_{out}) V_1\;dS
+\int_{\der\mcl{D}_r(\rx_0)\cap \Gam_L}\frac{a_{nn}p'(B(\Phi_0,\nabla\vphi_0))gV_1}{J_0}\;dS.
\end{split}
\end{equation*}
By using H\"{o}lder inequality, \eqref{normalization} and Lemma \ref{lemma-3-1}(b), we get
\begin{equation}
\label{3-d8}
|\mcl{J}_2|\le
Cr^{\frac{n-1}{2}}
\left(\int_{\der\mcl{D}_r(\rx_0)}|V_1|^2\;d\rx\right)^{1/2},
\end{equation}
where the constant $C$ in \eqref{3-d8} depends only on the data. Unless otherwise specified, we presume that any constant $C$ appearing in various estimates depends only on the data for the rest of the paper even though $C$ may be different in each estimate.
Since $V_1=0$ on $\der\mcl{D}_r(\rx_0)\cap \N$, one can apply the trace inequality and Poincar\'{e} inequality with scaling to obtain
\begin{equation*}
\left(\int_{\der\mcl{D}_r(\rx_0)}|V_1|^2\;dS\right)^{1/2}\le C\left(r\int_{\mcl{D}_r(\rx_0)}|DV_1|^2\;d\rx\right)^{1/2},
\end{equation*}
and this combined with \eqref{3-d8} gives
\begin{equation}
\label{3-e1}
|\mcl{J}_2|\le Cr^{n/2}\left(\int_{\mcl{D}_r(\rx_0)}|DV_1|^2\;d\rx\right)^{1/2}.
\end{equation}
Also, it is easy to show
\begin{equation}
\label{3-e2}
|\mcl{J}_1|\le C r^{n/2}
\left(\int_{\mcl{D}_r(\rx_0)}|DV_1|^2+|DV_2|^2\;d\rx \right)^{1/2}.
\end{equation}
By \eqref{bd-functions} and \eqref{normalization}, $W_{bd}$ satisfies
\begin{equation}
\label{est-bdfunctions}
\|W_{bd}\|_{2,\alp,\N}\le C,
\end{equation}
and this yields
\begin{equation}
\label{est-I2}
|I^{(2)}(V_1,V_2)|\le Cr^{n/2}\left(\int_{\mcl{D}_r(\rx_0)}|DV_1|^2+|DV_2|^2 \;d\rx \right)^{1/2}.
\end{equation}
It follows from \eqref{3-d5}-\eqref{3-d7n}, \eqref{3-e1}, \eqref{3-e2}, \eqref{est-I2} and Lemma \ref{lemma-3-2} that
\begin{equation}
\label{est-diff}
\int_{\mcl{D}_r(x_0)}|DV_1|^2+|DV_2|^2\;d\rx\le C\left(r^2C_H^2+r^n\right).
\end{equation}
One can easily adjust the proof of \cite[Lemma 4.12]{Ha-L} and use Lemma \ref{lemma-3-1}(a) to show that there exist two constants $\hat\alp\in(0,1)$ and $\hat{C}>0$ depending only on the data such that $(w_1, w_2)$ in \eqref{3-d3} satisfy
\begin{equation}
\label{est-harmonic}
\int_{\mcl{D}_{\varrho}(\rx_0)}|Dw_1|^2+|Dw_2|^2\;d\rx\le \hat C
\left(\frac{\varrho}{r}\right)^{n-2+2\hat{\alp}}
\int_{\mcl{D}_{r}(\rx_0)}|Dw_1|^2+|Dw_2|^2\;d\rx
\end{equation}
holds whenever $0<\varrho\le r$.  Combining \eqref{est-diff} with \eqref{est-harmonic} yields
\begin{equation*}
\int_{\mcl{D}_{\varrho}(\rx_0)}|D\til v|^2+|D\til W|^2\;d\rx\le
\hat C\left(\frac{\varrho}{r}\right)^{n-2+2\hat{\alp}}
\int_{\mcl{D}_{r}(\rx_0)}|D\til v|^2+|D\til W|^2\;d\rx+10C(C_H+1)^2r^2
\end{equation*}
whenever $0<\varrho\le r\le \frac{1}{10}\min(L, \rm{diam} \Lambda, 1)$ where $C$ and $\hat C$ are same as in \eqref{est-diff} and \eqref{est-harmonic}, respectively.
\begin{itemize}
\item[\emph{Case 1.}] For $n>2$, if $n-2+2\hat{\alp}\le 2$, then
\cite[Lemma 3.4]{Ha-L} implies that one can find constants $\bar\alp\in(0,\hat{\alp})$  and $R_0$ with $0<R_0\le \frac{1}{10}\min(L, \tx{diam}\;\Lambda, 1)$ depending only on the data such that that whenever $0<r\le R_0$, $(\til v, \til W)$ satisfy
\begin{equation}
\label{3-e3}
\int_{\mcl{D}_r(\rx_0)}|D\til v|^2+|D\til W|^2\;d\rx\le Cr^{n-2+2\bar\alp}\;\;\tx{for any}\;\;\rx_0\in \mcl{C}\cap\ol{\Gam_L}.
\end{equation}
\item[\emph{Case 2.}] For $n>2$, if $n-2+2\hat{\alp}> 2$, then
\cite[Lemma 3.4]{Ha-L} implies that there exists a constant $R_1$ with $0<R_1\le \frac{1}{10}\min(L, \tx{diam}\;\Lambda, 1)$ depending only on the data such that that whenever $0<r\le R_1$, $(\til v, \til W)$ satisfy
\begin{equation}
\label{3-e3n}
\int_{\mcl{D}_r(\rx_0)}|D\til v|^2+|D\til W|^2\;d\rx\le Cr^2\;\;\tx{for any}\;\;\rx_0\in \mcl{C}\cap\ol{\Gam_L}.
\end{equation}
By adjusting the proof of \cite[Lemma 3.3, Theorem 3.8]{Ha-L} with using \eqref{3-e3n}, one can find a constant $\bar\alp\in(0,\hat{\alp})$ and $R_2\in (0,R_1)$ depending only on the data such that
\begin{equation*}
\int_{\mcl{D}_r(\rx_0)}|\til W|^2\;d\rx\le Cr^{n-2+2\bar\alp}
\end{equation*}
holds whenever $0<r\le R_2$, and this gives
\begin{equation*}
\int_{\mcl{D}_r(\rx_0)}|D\til v|^2+|D\til W|^2\;d\rx\le Cr^{n-2+2\bar\alp}\;\;\tx{for any}\;\;\rx_0\in \mcl{C}\cap\ol{\Gam_L},\quad 0<r\le R_2.
\end{equation*}
\end{itemize}
The case of $n=2$ can be handled similarly.
Also, one can easily show that \eqref{3-e3} holds for all $x_0\in \ol{\N}$ by repeating the argument above for any given $\rx_0\in\ol{\N}$.
\end{proof}
\end{lemma}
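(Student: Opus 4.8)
The plan is to obtain the H\"{o}lder bound from a Campanato-type decay estimate for the Dirichlet energy of $(\til v,\til W)$ on small balls, combined with the uniform $H^1$ bound from Lemma \ref{lemma-3-2}. Concretely, I would produce constants $R_1>0$, $\kappa_0>0$ and $\bar\alp\in(0,1)$, depending only on the data, such that
\begin{equation*}
\int_{B_r(\rx_*)\cap\N}|D\til v|^2+|D\til W|^2\;d\rx\le \kappa_0^2\, r^{n-2+2\bar\alp}\quad\tx{for all}\;\;\rx_*\in\N,\;\;r\in(0,R_1].
\end{equation*}
Since $\N$ is a cylinder over $\Lambda$, balls centered in $\ol{\N}$ carry a uniform volume-density bound $\vol(B_r(\rx_*)\cap\N)\ge \zeta_0^{-1}\vol(B_r(\rx_*))$, so Morrey's lemma in the form \cite[Theorem 3.1]{Ha-L} converts this decay into $\|\til v\|_{\alp,\N}+\|\til W\|_{\alp,\N}\le C\bigl(\kappa_0+\|\til v\|_{L^2(\N)}+\|\til W\|_{L^2(\N)}\bigr)$ for every $\alp\in(0,\bar\alp]$, and \eqref{3-d1} then follows from Lemma \ref{lemma-3-2}. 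The decay has to be verified in three geometric situations: interior balls; boundary points sitting on a single smooth face $\Gam_0$, $\Gam_L$ or $\Gam_w$, where only one of the Dirichlet/conormal conditions is active; and points of the edge set $\mcl{C}:=(\ol{\Gam_0}\cup\ol{\Gam_L})\cap\ol{\Gam_w}$, where a Dirichlet and a conormal condition meet along a Lipschitz corner. The first two are the classical frozen-coefficient comparison scheme and give the same or a better rate, so the substantive work is the edge case.

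For $\rx_0\in\mcl{C}$, say $\rx_0\in\mcl{C}\cap\ol{\Gam_L}$, I would set $\mcl{D}_r(\rx_0):=\N\cap B_r(\rx_0)$, freeze the diagonal matrix $[a_{ij}]$ at $\rx_0$, and compare $(\til v,\til W)$ with the pair $(w_1,w_2)$ that solves the homogeneous constant-coefficient problems $\der_i(a_{ij}(\rx_0)\der_jw_1)=0$ and $\Delta w_2=0$ in $\mcl{D}_r(\rx_0)$, with conormal/Neumann conditions on $\der\mcl{D}_r(\rx_0)\cap(\Gam_L\cup\Gam_w)$ and with Dirichlet data $(\til v,\til W)$ on the interior portion $\der\mcl{D}_r(\rx_0)\cap\N$. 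Subtracting the two weak formulations and testing the difference $(V_1,V_2):=(\til v,\til W)-(w_1,w_2)$ against itself, coercivity of the frozen bilinear form---which persists because the cancellation $\der_z{\bf A}+\der_{\bf q}B=0$ of Lemma \ref{lemma-3-1}(c) removes the cross terms exactly as in the proof of Lemma \ref{lemma-3-2}---reduces the matter to estimating the error functionals termwise. The coefficient-oscillation term is bounded by $\|a_{ij}\|_{C^1(\ol{\N})}\,r\,\|\til v\|_{H^1(\N)}\bigl(\int_{\mcl{D}_r(\rx_0)}|DV_1|^2\,d\rx\bigr)^{1/2}$; the zeroth-order coupling terms carrying $\til W$ are handled with the Sobolev/Poincar\'{e} bound $\int_{\mcl{D}_r(\rx_0)}|\til W|^2\,d\rx\le Cr^2\|\til W\|_{H^1(\N)}^2$; and the contributions of ${\bf F},f,g,W_{bd}$ are bounded by $Cr^{n/2}\bigl(\int_{\mcl{D}_r(\rx_0)}|DV_1|^2+|DV_2|^2\,d\rx\bigr)^{1/2}$ after the trace inequality, the scaled Poincar\'{e} inequality and \eqref{normalization}. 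Absorbing the $\|DV\|_{L^2}$ factors and invoking Lemma \ref{lemma-3-2} gives $\int_{\mcl{D}_r(\rx_0)}|DV_1|^2+|DV_2|^2\,d\rx\le C(r^2C_H^2+r^n)$.

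Next I would invoke the decay of the comparison functions themselves. Adapting \cite[Lemma 4.12]{Ha-L} to the Lipschitz wedge $\mcl{D}_r(\rx_0)$ carrying a Dirichlet part and a conormal part of the boundary, and using the uniform ellipticity of $[a_{ij}]$ from Lemma \ref{lemma-3-1}(a), there are $\hat\alp\in(0,1)$ and $\hat C>0$, depending only on the data, with
\begin{equation*}
\int_{\mcl{D}_{\varrho}(\rx_0)}|Dw_1|^2+|Dw_2|^2\;d\rx\le\hat C\Bigl(\frac{\varrho}{r}\Bigr)^{n-2+2\hat\alp}\int_{\mcl{D}_r(\rx_0)}|Dw_1|^2+|Dw_2|^2\;d\rx,\qquad 0<\varrho\le r.
\end{equation*}
Combining this with the difference estimate via the triangle inequality produces the hole-filling inequality
\begin{equation*}
\int_{\mcl{D}_{\varrho}(\rx_0)}|D\til v|^2+|D\til W|^2\;d\rx\le\hat C\Bigl(\frac{\varrho}{r}\Bigr)^{n-2+2\hat\alp}\int_{\mcl{D}_r(\rx_0)}|D\til v|^2+|D\til W|^2\;d\rx+C(C_H+1)^2r^2,
\end{equation*}
valid for $0<\varrho\le r$ once $r$ is below a fixed threshold determined by $L$ and $\Lambda$.

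Finally, the iteration lemma \cite[Lemma 3.4]{Ha-L} applied to the hole-filling inequality yields the claimed energy decay. When $n-2+2\hat\alp\le 2$ it produces $r^{n-2+2\bar\alp}$ directly for a suitable $\bar\alp\in(0,\hat\alp)$; when $n-2+2\hat\alp>2$ the $r^2$-term dominates and the iteration first gives only the $r^2$ bound, in which case I would run a separate scalar Campanato bootstrap for $\til W$ alone---its equation is a single second-order elliptic equation, so the scheme of \cite[Lemma 3.3, Theorem 3.8]{Ha-L} applies and improves $\int_{\mcl{D}_r(\rx_0)}|\til W|^2\,d\rx$ to $Cr^{n-2+2\bar\alp}$, which then restores the full energy decay for the pair. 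The case $n=2$ goes through identically, and the interior and single-face cases are the standard, easier versions of the same argument, so the decay holds at every $\rx_0\in\ol{\N}$; this completes the proof. I expect the main obstacle to be precisely the edge case: establishing the clean power decay of the mixed Dirichlet--conormal constant-coefficient comparison problem on the Lipschitz wedge, and then pushing the Morrey iteration through the borderline exponent $n-2+2\hat\alp=2$ by means of the auxiliary bootstrap on $\til W$.
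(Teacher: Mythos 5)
Your proposal matches the paper's proof step for step: the same Campanato/Morrey machinery from Han--Lin, the same frozen-coefficient comparison with decoupled Dirichlet--conormal problems for $w_1$ and $w_2$ at a corner point, the same $C(r^2C_H^2+r^n)$ error bound via trace and scaled Poincar\'e inequalities, the same hole-filling iteration via \cite[Lemma 3.4]{Ha-L}, and the same auxiliary scalar Campanato bootstrap for $\til W$ alone at the borderline exponent $n-2+2\hat\alp=2$. One small slip worth flagging: the frozen comparison bilinear form $\int_{\mcl{D}_r(\rx_0)}a_{ij}(\rx_0)\der_iV_1\der_jV_1+|DV_2|^2\,d\rx$ has no cross terms at all, so its coercivity comes directly from Lemma \ref{lemma-3-1}(a); the cancellation $\der_z{\bf A}+\der_{\bf q}B=0$ of Lemma \ref{lemma-3-1}(c) does its work in the global $H^1$ estimate of Lemma \ref{lemma-3-2}, while in the present comparison argument the cross terms $\til W\,\der_z{\bf A}\cdot Dz_1$ and $\bigl(\til W\der_zB+\der_{\bf q}B\cdot D\til v\bigr)z_2$ are simply moved into the error functionals $\mcl{T}_2,\mcl{T}_3$ and bounded as lower-order contributions, not canceled.
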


\subsection{$C^{1,\alp}$ estimates}
\label{section4-2}
Under the assumption of Proposition \ref{proposition-3-1}, let $(v,W)$ be a weak solution of \eqref{lbvp-1}--\eqref{lbvp-3} in the sense of Definition \ref{def-weaksol}. Then Lemma \ref{lemma-3-3} combined with \eqref{est-bdfunctions} provides $C^{\alp}$ estimate of $(v,W)=(\til v,\til W)+(0,W_{bd})$ for $\alp\in(0,\bar\alp]$. In order to get uniform $C^{1,\alp}$ estimates of $(v,W)$, we regard the elliptic system \eqref{lbvp-1} as two separate elliptic equations. This is possible because the principal parts of \eqref{lbvp-1} are not coupled. The H\"{o}lder gradient estimate for first equation in \eqref{lbvp-1} gives uniform $C^{1,\alp}$ estimate of $v$. Then $C^{1,\alp}$ estimate of $W$ is obtained by applying Lemma \ref{lemma-3-3} and uniform $C^{1,\alp}$ estimate of $v$.
\begin{lemma}
\label{lemma-3-6}
Under the same assumption as Proposition \ref{proposition-3-1}, let $(\til v, \til W)$ be as in Lemma \ref{lemma-3-2}.
Then, for any $\alp\in(0,1)$, there exists a constant $C_B$ depending only on the data and $\alp$ so that $(v, W)$ satisfy
\begin{equation}
\label{3-g3}
\|v\|_{1,\alp,\N}+\|W\|_{1,\alp,\N}\le
C_B.
\end{equation}
\begin{proof}
First, we prove $C^{1,\alp}$ regularity of $(v,W)$ for $\alp\in(0,\bar\alp]$ where $\bar\alp$ is from Lemma \ref{lemma-3-3}, then a bootstrap argument will give $C^{1,\alp}$ regularity of $(v,W)$ for any $\alp\in(0,1)$. 

Fix $\alp\in(0,\bar{\alp}]$. By \eqref{est-bdfunctions}, if we show that $(\til v,\til W)=(v,W)-(0,W_{bd})$ satisfy
\begin{equation}
\label{3-g9}
\|\til v\|_{1,\alp,\N}+\|\til W\|_{1,\alp,\N}\le
C
\end{equation}
for a constant $C$ depending only on the data and $\alp$, then \eqref{3-g3} easily follows.

{Step 1.}
By choosing $\eta=0$ in \eqref{3-c1}, $\til v$ can be regarded as a weak solution to
\begin{equation}
\label{3-e5}
\begin{cases}
\der_i(\sum_{j=1}^n a_{ij}\der_j\til v)=\Div({\bf F}-\til W\der_z{\bf A}(\Phi_0,D\vphi_0))=:\Div {\bf F}^*&\tx{in}\;\;\N,\\
\til v=0&\tx{on\;\;}\Gam_0,\\
(\sum_{j=1}^n a_{ij}\der_j\til v)\cdot {\bf n}_w=0&\tx{on}\;\;\Gam_w,\\
(\sum_{j=1}^n a_{ij}\der_j\til v)\cdot {\bf n}_L=\frac{a_{nn}p'(B(\Phi_0,\nabla\vphi_0))g}{J_0}=:\mfh&\tx{on}\;\;\Gam_L.
\end{cases}
\end{equation}
By Lemma \ref{lemma-3-3}, ${\bf F}^*$ satisfies the estimate
\begin{equation}
\label{3-e6}
\|{\bf F}^*\|_{\alp,\N}\le C.
\end{equation}
 Since ${\mfh}\in C^{\alp}(\ol{\Gam_L})$, one can find a function $v_{\mfh}\in C^{1,\alp}(\ol{\N\cap\{x_n>\frac L4\}})$ satisfying
 \begin{equation}
 \label{4-a2}
 (\sum_{j=1}^n a_{ij}\der_j v_{\mfh})\cdot {\bf n}_L=\mfh\quad\tx{on}\quad\Gam_L\quad\tx{and}\quad
 \|v_{\mfh}\|_{1,\alp,\N\cap\{x_n>\frac L4\}}\le C.
 \end{equation}
One can refer to Step 2 in the proof of Lemma 3.1 of \cite{Ch-F3} for more details on the existence of such $v_{\mfh}$. It is easy to see that $V_{\mfh}:=\til v-v_{\mfh}$ is a weak solution to
\begin{equation}
\label{3-e7}
\begin{cases}
\der_i(\sum_{j=1}^n a_{ij}\der_jV_{\mfh})=\Div({\bf F}^*-\sum_{j=1}^na_{ij}\der_j v_{\mfh})=:\Div \hat{\bf H}&\tx{in}\;\;\N\cap\{x_n>L/4\},\\
(\sum_{j=1}^n a_{ij}\der_jV_{\mfh})\cdot{\bf n}_w=-(\sum_{j=1}^n a_{ij}\der_j v_{\mfh})\cdot{\bf n}_w=:\hat g_1&\tx{on}\;\;\Gam_w\cap\{x_n>L/4\},\\
V_{\mfh}=\til v-v_{\mfh}=:\hat g_2&\tx{on}\;\;\ol{\N}\cap\{x_n=L/4\},\\
(\sum_{j=1}^n a_{ij}\der_j V_{\mfh})\cdot {\bf n}_L=0&\tx{on}\;\;\Gam_L.
\end{cases}
\end{equation}
For $l\in\R^+$, set $Q^+_{l}:=\Lambda\times (l,2L-l)$. For $(\rx',x_n)\in Q^+_{\frac L4}$, define
\begin{equation}
\label{3-e9}
a_{ij}(\rx',x_n)=a_{ij}(\rx',2L-x_n),\;\;
\hat g_k(\rx',x_n)=\hat g_k(\rx', 2L-x_n)
\end{equation}
for $i,j=1,\cdots,n$, $k=1,2$ and for all $\rx'=(x_1,\cdots,x_{n-1})\in\ol{\Lambda}$.    Also, we define ${\bf H}=(H_1,\cdots,H_n)$ in $Q^+_{\frac L4}$ by
\begin{equation}
\label{3-f1}
\begin{split}
&H_j(\rx',x_n)=\begin{cases}
\hat H_j(\rx',x_n)&\tx{for}\;\;x_n\le L\\
\hat H_j(\rx',2L-x_n)&\tx{for}\;\;x_n>L
\end{cases}\;\;\tx{for}\;\;j\neq n,
\end{split}
\end{equation}
and
\begin{equation}
\begin{split}
&H_n(\rx',x_n)=
\begin{cases}
\hat H_n(\rx',x_n)-\hat H_n(\rx',L)&\tx{for}\;\;x_n\le L,\\
-(\hat H_n(\rx',2L-x_n)-\hat H_n(\rx',L))&\tx{for}\;\;x_n>L.
\end{cases}
\end{split}
\end{equation}
According to \eqref{3-e9} and \eqref{3-f1}, $a_{ij}, \hat g_k$ and each $\hat H_j(j\neq n)$ are extended from $\N\cap\{x_n>\frac L4\}$ to $Q^+_{\frac L4}$ by even reflection about $x_n=L$, and $H_n$ is defined as the odd extension of $\hat H_n(\rx',x_n)-\hat H_n(\rx',L)$ about $x_n=L$.
Hence $a_{ij}\in C^{0,1}(\ol{Q^+_{\frac L4}})$, ${\bf H}\in C^{\alp}(\ol{Q^+_{\frac L4}})$ and $\hat g_1\in C^{\alp}(\der\Lambda\times\{|x_n-L|<3L/4\})$. For such extended functions $a_{ij}, \hat g_1, \hat g_2$ and ${\bf H}$, the boundary value problem
\begin{equation}
\label{3-e8}
\begin{cases}
\der_i(\sum_{j=1}^n a_{ij}\der_j U)=\Div{\bf H},&\tx{in}\;\;Q^+_{\frac L4}\\
(\sum_{j=1}^na_{ij}\der_j U)\cdot {\bf n}_w=\hat g_1&\tx{on}\;\;\der\Lambda\times\{|x_n-L|<3L/4\},\\
U=\hat g_2&\tx{on}\;\; \ol{\Lambda}\times\{|x_n-L|=3L/4\}
\end{cases}
\end{equation}
has a unique weak solution $U\in H^1(Q^+_{\frac L4})$. Furthermore, the assumption \eqref{normalization} together with the proof of \cite[Lemma 3.1]{Ch-F3} shows that
\begin{equation}
\label{4-a1}
\|U\|_{1,\alp,Q^+_{\frac L3}}\le C(\|U\|_{L^2(Q^+_{\frac L4})}+1).
\end{equation}
It also follows from \eqref{3-e9} and \eqref{3-f1} that if $U$ is a weak solution of \eqref{3-e8}, then so is $\til U(\rx',x_n)=U(\rx',2L-x_n)$. Then the uniqueness of weak solution yields $U(\rx',x_n)=U(\rx',2L-x_n)$ thus  $\der_n U(\rx',L)=0$ which implies $(\sum_{j=1}^na_{ij}\der_jU)\cdot{\bf n}_L=0$ on $\Gam_L$. From this, $V_{\mfh}=U$ is obtained in $\N\cap\{|x_n-L|<3L/4\}$. Therefore, by Lemma \ref{lemma-3-3}, \eqref{3-e6}, \eqref{4-a2} and \eqref{4-a1}, the estimate
\begin{equation}
\label{3-f2}
\|\til v\|_{1,\alp,\N\cap\{x_n>\frac L3\}}\le C
\end{equation}
holds for a constant $C$ depending only on the data and $\alp$.

{Step 2.} Back to \eqref{3-e5}, by \eqref{3-a2}, Definition \ref{def-background-sol} and \eqref{F-condition}, ${\bf F}^*$ satisfies
 \begin{equation}
 \label{F-entrance}
 {\bf F}^*\cdot \hat{\bf e}_j=0\quad\tx{on}\quad\Gam_0\;\;\tx{for}\;\;j=1,\cdots,n-1.
 \end{equation}
For $l\in\R^+$, set $Q^-_{l}:=\Lambda\times(-l,l)$, and we extend $a_{ij}$ and ${\bf F}^*$ into $Q^-_{3L/4}$ as follows: for $x_n<0$, define
\begin{equation}
\label{4-a5}
a_{ij}(\rx',x_n)=a_{ij}(\rx',-x_n)\;\;\tx{for}\;\;\rx' \in\ol{\Lambda},\quad i,j=1,\cdots,n.
\end{equation}
Define $\til{\bf F}=(\til F_1,\cdots,\til F_n)$ in $Q^-_{3L/4}$ by
\begin{equation}
\label{4-a6}
\begin{split}
&\til F_j(\rx',x_n)=\begin{cases}
F^*_j(\rx',x_n)&\tx{for}\;\;x_n\ge 0\\
-F^*_j(\rx',-x_n)&\tx{for}\;\;x_n<0
\end{cases}\;\;\tx{for}\;\;j\neq n,\\
&\til F_n(\rx',x_n)=\begin{cases}
F^*_n(x',x_n)&\tx{for}\;\;x_n\ge 0\\
F^*_n(\rx',-x_n)&\tx{for}\;\;x_n<0
\end{cases}.
\end{split}
\end{equation}
It follows from \eqref{normalization} and \eqref{F-entrance} that each $\til F_j$ is in $C^{\alp}(\ol{Q^-_{3L/4}})$ and satisfies $\|\til F_j\|_{\alp,Q_{3L/4}}\le C$. Hence, the boundary value problem
\begin{equation}
\label{4-a7}
\begin{cases}
\der_i(\sum_{j=1}^na_{ij}\der_j \til U)=\Div\; \til{\bf F}&\tx{in}\;\;Q^-_{3L/4}\\
(\sum_{j=1}^na_{ij}\der_j \til U)\cdot{\bf n}_w=0&\tx{on}\;\;\der\Lambda\times(-\frac {3L}4, \frac {3L}4)\\
\til U(\rx',x_n)=(\sgn\; x_n)\til v(x',|x_n|)&\tx{on}\;\;\ol{\Lambda}\times \{|x_n|=\frac{3L}{4}\}
\end{cases}
\end{equation}
has a unique weak solution $\til U\in H^1(Q^-_{3L/4})$. Moreover, $\til U$ satisfies the estimate
\begin{equation}
\label{4-a8}
\|\til U\|_{1,\alp,Q^-_{2L/3}}\le C.
\end{equation}
In fact, \eqref{4-a1} and \eqref{4-a8} can be verified by simplifying the estimates of \cite{lieberman1} to the case of linear elliptic equation(see \cite[Theorems 1.1 and 1.2]{lieberman1}).
By using \eqref{4-a5}, \eqref{4-a6} and the uniqueness of weak
solution to \eqref{4-a7}, one can directly check that $\til U(\rx',-x_n)=-\til U(\rx',x_n)$ in $Q^-_{3L/4}$, and this yields $\til U(x',0)=0$ on $\Gam_0$. Therefore, we get $\til v=\til U$ in $\N\cap\{x_n<3L/4\}$. It follows from \eqref{3-d1} and \eqref{4-a8} that
$\|\til v\|_{1,\alp,\N\cap\{x_n<2L/3\}}\le C$.
Combining this with \eqref{3-f2} yields
\begin{equation}
\label{4-a9}
\|\til v\|_{1,\alp,\N}\le C
\end{equation}
for a constant $C$ depending only on the data.

{Step 3.} Substituting $\xi=0$ into \eqref{3-c1}, $\til W$ can be regarded as a weak solution to
\begin{equation}
\label{3-f3}
\begin{cases}
\Delta \til W=f+(\til W\!+\!W_{bd})\der_zB(\Phi_0,D\vphi_0)\!-\!\Delta W_{bd}+\der_{\bf q}B(\Phi_0,D\vphi_0)\!\cdot\! D\til v=:\mff_*,&\tx{in}\;\;\N,\\
\til W=0,&\tx{on}\;\;\Gam_0\cup\Gam_L,\\
D\til W\cdot{\bf n}_w=0,&\tx{on}\;\;\Gam_w.
\end{cases}
\end{equation}
By using odd extensions of $\mff_*$ and $V$ about $\Gam_0$ and $\Gam_L$, one can argue similarly as Step 2 to show that
\begin{equation}
\label{4-b1}
\|\til W\|_{1,\alp,\N}\le  C
\end{equation}
for a constant $C$ depending only on the data and $\alp$.

We have shown that $(v,W)$ satisfy \eqref{3-g3} for $\alp\in(0,\bar\alp]$ therefore  $v$ and $W$ are $C^{\alp}$ in $\ol{\N}$ for all $\alp\in(0,1)$. Then, one can repeat the argument above to show that \eqref{3-g3} holds for all $\alp\in(0,1)$. 
\end{proof}
\end{lemma}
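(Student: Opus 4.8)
The plan is to exploit the fact, recorded in Lemma~\ref{lemma-3-1}(a),(b), that the principal part of the system \eqref{lbvp-1} is \emph{decoupled}: as an equation for $v$, the operator $L_1$ of \eqref{3-a3} is a scalar divergence-form operator with a diagonal, smooth, uniformly elliptic coefficient matrix $[a_{ij}]$, while the operator $L_2$ of \eqref{3-a4} has the Laplacian as its principal part in $W$. Hence the $C^{1,\alp}$ bounds can be obtained equation by equation, the lower-order coupling being absorbed via the $C^{\alp}$ estimate already furnished by Lemma~\ref{lemma-3-3} (valid for $\alp\in(0,\bar\alp]$) together with the bound \eqref{est-bdfunctions} on $W_{bd}$. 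By \eqref{est-bdfunctions} it suffices to estimate $(\til v,\til W)=(v,W)-(0,W_{bd})$ (note $\til v=v$).

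The first and principal step is the $v$-equation. Taking $\eta=0$ in the weak identity \eqref{3-c1}, $\til v$ is a weak solution of $\der_i(a_{ij}\der_j\til v)=\Div\bigl({\bf F}-\til W\,\der_z{\bf A}(\Phi_0,D\vphi_0)\bigr)=:\Div{\bf F}^*$ in $\N$, with $\til v=0$ on $\Gam_0$, the homogeneous conormal condition on $\Gam_w$, and an inhomogeneous conormal condition on $\Gam_L$ from \eqref{lbvp-2}; Lemma~\ref{lemma-3-3}, \eqref{normalization} and Lemma~\ref{lemma-3-1}(b) give $\|{\bf F}^*\|_{\alp,\N}\le C$ and a uniform $C^{\alp}(\ol{\Gam_L})$ bound for the conormal datum. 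The only genuine difficulty is $C^{1,\alp}$ regularity up to the Lipschitz edges $\ol{\Gam_0}\cap\ol{\Gam_w}$ and $\ol{\Gam_L}\cap\ol{\Gam_w}$, where the boundary condition changes between Dirichlet and conormal type; away from them, interior and single-type boundary Schauder estimates for divergence-form equations with $C^{\alp}$ coefficients apply directly. Near $\Gam_L$ I would first subtract a $C^{1,\alp}$ corrector realizing the inhomogeneous conormal datum (cf.\ \cite[Lemma~3.1]{Ch-F3}) to make the condition on $\Gam_L$ homogeneous, then extend by \emph{even} reflection across $\{x_n=L\}$ — even reflection of the diagonal coefficients and of the solution, with the right-hand side vector reflected with parities (even tangential components, odd normal component, the latter first shifted to vanish on $\{x_n=L\}$) that keep the reflected function a weak solution; this turns $\ol{\Gam_L}\cap\ol{\Gam_w}$ into an interior edge, locally bounded only by the smooth lateral surface carrying a homogeneous conormal condition, so the $C^{1,\alp}$ estimate for conormal problems in such domains (\cite[Lemma~3.1]{Ch-F3}, \cite{lieberman1}) bounds $\til v$ on $\N\cap\{x_n>L/3\}$. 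Near $\Gam_0$ I would instead use \emph{odd} reflection across $\{x_n=0\}$ (legitimate since $\til v=0$ there), with even reflection of $a_{ij}$ and the parity-matched reflection of ${\bf F}^*$; here the hypothesis \eqref{F-condition} — equivalently ${\bf F}^*\cdot\hat{\bf e}_j=0$ on $\Gam_0$ for $j<n$, which follows from \eqref{F-condition} and the identity $\der_z A_i=(\der_zB)\der_i\vphi_0$ in \eqref{3-a2} together with $\der_i\vphi_0=0$ for $i<n$ — is exactly what makes the odd extension of ${\bf F}^*$ lie in $C^{\alp}$. Again the Dirichlet/conormal edge becomes interior and the same conormal estimate yields a bound on $\N\cap\{x_n<2L/3\}$; patching over the overlap gives $\|\til v\|_{1,\alp,\N}\le C$.

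With $\til v\in C^{1,\alp}(\ol{\N})$ in hand, I would return to the $W$-equation: taking $\xi=0$ in \eqref{3-c1}, $\til W$ is a weak solution of $\Delta\til W=f+(\til W+W_{bd})\der_zB(\Phi_0,D\vphi_0)-\Delta W_{bd}+\der_{\bf q}B(\Phi_0,D\vphi_0)\cdot D\til v=:\mathfrak f_*$ in $\N$, with $\til W=0$ on $\Gam_0\cup\Gam_L$ (since $W_{bd}$ interpolates $W_{en}$ and $W_{ex}$ by \eqref{bd-functions}) and $\der_{{\bf n}_w}\til W=0$ on $\Gam_w$ (as $\der_{{\bf n}_w}W_{bd}=0$ by \eqref{compatibility-linear}). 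By Lemma~\ref{lemma-3-3}, the bound on $\til v$ just obtained, \eqref{est-bdfunctions} and Lemma~\ref{lemma-3-1}(b), one gets $\mathfrak f_*\in C^{\alp}(\ol{\N})$. Since $\til W$ vanishes on both end faces, I would reflect it \emph{oddly} across both $\{x_n=0\}$ and $\{x_n=L\}$ (with the odd extension of $\mathfrak f_*$, the Laplacian unchanged), converting both Dirichlet/conormal edges into interior edges of an extended cylinder whose only boundary is, locally, a smooth lateral surface with homogeneous conormal condition, and apply the same conormal $C^{1,\alp}$ estimate to get $\|\til W\|_{1,\alp,\N}\le C$. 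With \eqref{est-bdfunctions} this yields \eqref{3-g3} for $\alp\in(0,\bar\alp]$.

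A bootstrap then closes the argument: $v,W\in C^{1,\bar\alp}(\ol{\N})$ implies $v,W\in C^{\beta}(\ol{\N})$ for every $\beta\in(0,1)$, so the $C^{\alp}$ inputs used above (the bounds on ${\bf F}^*$, on the $\Gam_L$ conormal datum, and on $\mathfrak f_*$) are in fact available for any $\alp\in(0,1)$, and rerunning the three reflection steps upgrades the estimate to all $\alp\in(0,1)$. The step I expect to be the main obstacle is the $C^{1,\alp}$ regularity up to the Lipschitz corners where the boundary condition changes type; the reflection construction removes it, but it requires care to check, case by case, that the chosen parities (even versus odd, and for which components of the flux and of the right-hand side) genuinely produce a well-posed reflected problem — with $C^{\alp}$ data and coefficients that, though only Lipschitz after reflection, remain admissible for the $C^{1,\alp}$ estimate.
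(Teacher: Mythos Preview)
Your proposal is correct and follows essentially the same approach as the paper: decouple the principal parts, treat $\til v$ via the scalar divergence-form equation using a $C^{1,\alp}$ corrector plus even reflection across $\Gam_L$ and odd reflection across $\Gam_0$ (the latter relying on \eqref{F-condition}), then treat $\til W$ via the Poisson equation with odd reflections across both end faces, and finally bootstrap from $\alp\le\bar\alp$ to all $\alp\in(0,1)$. The parities you describe for the reflected coefficients and fluxes, and the references to \cite{Ch-F3} and \cite{lieberman1}, match the paper's argument in every essential detail.
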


\subsection{Unique solvability of mixed boundary value problem}
In \S\ref{section3-2}--\S\ref{section4-2}, it is shown that the boundary value problem \eqref{lbvp-1}--\eqref{lbvp-3} has a unique weak solution $(v,W)$ and that the weak solution is $C^{1,\alp}$ up to the boundary of $\N$. In order to prove Proposition \ref{proposition-3-1}, it remains to show that $v$ and $w$ are $C^2$ in $\N$ thus the weak solution is actually a classical solution of \eqref{lbvp-1}--\eqref{lbvp-3}.
\begin{proof}[Proof of Proposition \ref{proposition-3-1}]
By Lemmas \ref{lemma-3-2}--\ref{lemma-3-6}, the linear boundary value problem \eqref{lbvp-1}--\eqref{lbvp-3} has a unique weak solution $(v,W)$ in the sense of Definition \ref{def-weaksol}, and $(v,W)$ satisfy the $C^{1,\alp}$ estimate \eqref{3-g3}. Then $v$ can be regarded as a weak solution of
\begin{equation}
\label{3-h3}
\begin{split}
&\der_i(\sum_{j=1}^n a_{ij}\der_j v)=\Div({\bf F}-W\der_z{\bf A}(\Phi_0,D\vphi_0))=:\Div \bar{\bf F}\quad\tx{in}\quad\N\\
&(\sum_{j=1}^n a_{ij}\der_j v)\cdot {\bf n}_w=0\quad\tx{on}\quad \Gam_w
\end{split}
\end{equation}
for $\bar{\bf F}\in C^{1,\alp}(\N\cup \Gam_w)$. Since every $a_{ij}$ is smooth in $\N$, $v$ is in $C^{2,\alp}(\N)$. In fact, $v$ is $C^{2,\alp}$ up to $\Gam_w$ away from $\ol{\Gam_0}\cup\ol{\Gam_L}$. We give a heuristic argument to verify this. For any given point $\rx_0\in\Gam_w\setminus(\ol{\Gam_0}\cup\ol{\Gam_L})$, $\Gam_w$ can be locally flattened near $\rx_0$, and \eqref{3-h3} can be written as a conormal boundary value problem as follows:
\begin{equation}
\label{3-h4}
\begin{split}
&\der_{y_i}(\sum_{j=1}^n\til a_{ij}\der_{y_j}\til v)=\Div \til{\bf F}\;\;\tx{in}\;\;B_{2R_0}^+({\bf 0}):=B_{2R_0}({\bf 0})\cap\{y_n>0\},\\
&(\sum_{j=1}^n\til a_{ij}\der_{y_j}\til v)\cdot {\til{\bf n}_w}=0\;\;\tx{on}\;\;
\der B_{2R_0}^+({\bf 0})\cap\{y_n=0\}
\end{split}
\end{equation}
for sufficiently small $R_0>0$ and inward unit normal vector $\til{\bf n}_w$ on $\{y_n=0\}$. We note that $\til{\bf n}_w$ is a constant vector. Here, we assume that $\rx_0$ is mapped to ${\bf 0}$ in the flattened domain. Since the boundary of the cross-section $\Lambda$ of $\N$ is assumed to be smooth, one can find a smooth flattening map near $\rx_0$ so that $\til a_{ij}\in C^{3}(\ol{B^+_{R_0}(\bf 0)})$ and $\til{\bf F}\in C^{1,\alp}(\ol{B^+_{R_0}(\bf 0)})$. For each $k=1,\cdots, n-1$, we differentiate \eqref{3-h4} with respect to $y_k$ then $w^{(k)}(y):=\til v_{y_k}(y)$ becomes a weak solution to
\begin{equation*}
\begin{split}
&\der_{y_i}(\sum_{j=1}^n\til a_{ij}\der_{y_j}w^{(k)})=\Div_y(\der_{y_k}\til{\bf F}-\sum_{j=1}^n\der_{y_k}\til a_{ij}\der_{y_j}\til v)=:\Div_y {\bf G^{(\rx_0)}}(y)  \quad\tx{in}\quad B_{R_0}^+({\bf 0})\\
&(\sum_{j=1}^n\til a_{ij}\der_{y_j}w^{(k)})\cdot\til{\bf n}_w=-(\sum_{j=1}^n\til \der_{y_k}a_{ij}\der_{y_j}\til v)\cdot{\til{\bf n}_w}=:g^{(\rx_0)}(y)\quad\tx{on}\quad \der B_{R_0}^+({\bf 0})\cap\{y_n=0\}
\end{split}
\end{equation*}
for ${\bf G}^{(\rx_0)}\in C^{\alp}(\ol{B_{R_0}^+(\bf 0)})$ and $g^{(\rx_0)}\in C^{\alp}(\ol{B_{R_0}^+(\bf 0)})$. Thus $w^{(k)}\in C^{1,\alp}(\ol{B^+_{R_0/2}({\bf 0})})$ for $k=1,\cdots, n-1$, and this implies that $\til v_{y_iy_j}$ is $C^{\alp}$ up to $\der B^+_{R_0}({\bf 0})$ unless $i=j=n$. One can also check that $\til v_{y_ny_n}$ is $C^{\alp}$ up to $\der B^+_{R_0}({\bf 0})$ by solving the first equation in \eqref{3-h4} for $\til v_{y_ny_n}$ because $\til a_{nn}$ is strictly positive in $\ol{B^+_{2R_0}(\bf 0)}$. This proves that $v$ is $C^{2,\alp}$ up to $\Gam_w$ away from $\ol{\Gam_0}\cup\ol{\Gam_L}$. Rigorous proof can be given by using difference quotient(cf.\cite[Section 8.3]{GilbargTrudinger}). Once $v\in C^{2,\alp}$ is shown, we can also show that $W$ is $C^{2,\alp}$ up to $\Gam_w$ away from $\ol{\Gam_0}\cup\ol{\Gam_L}$ using the second equation of \eqref{lbvp-1} and $C^{2,\alp}$ regularity of $v$.  It remains to estimate $C^{2,\alp}_{(-1-\alp,\Gam_0\cup\Gam_L)}$-norms of $v$ and $W$ by a scaling argument. For a fixed point $\ry_0\in\ol{\N}\setminus (\ol{\Gam_0}\cup\ol{\Gam_L})$, let $2d=\text{dist}(\ry_0, \corners)$, and set a scaled function
\begin{equation*}
v^{(\ry_0)}(s):=\frac{1}{d^{1+\alp}}\bigl(v(\ry_0+ds)-v(\ry_0)-dDv(\ry_0)\cdot s\bigr)
\end{equation*}
for $s\in \{s\in B_1(0):\ry_0+ds\in \N\}=:\mcl{B}_1(\ry_0)$ where $B_1(0)$ is the unit ball in $\R^n$ with the center at the origin. By \eqref{3-g3}, we have
\begin{equation}
\label{4-b2}
\|v^{(y_0)}\|_{C^0(\mcl{B}_1(0))}\le \|v\|_{1,\alp,\N}\le C_B.
 \end{equation}
Substituting $v^{(\ry_0)}$ into \eqref{3-h3} gives
 \begin{equation}
 \label{4-b3}
 \begin{cases}
 \der_{s_i}(\sum_{j=1}^na_{ij}(\ry_0+ds)\der_{s_j}v^{(\ry_0)})=\Div_s \bar{\bf F}^{(\ry_0)}\;\;\tx{in}\;\;\mcl{B}_1(0),\\
 (\sum_{j=1}^n a_{ij}(\ry_0+ds)\der_{s_j}v)\cdot{\bf n}_w(\ry_0+ds)=\mathfrak{g}^{(\ry_0)},\;\;\quad\tx{on}\;\;\der\mcl{B}_1(0)\cap\{\ry_0+ds\in\Gam_w\}.\\
 \;(\tx{if $\der\mcl{B}_1(0)\cap\{\ry_0+ds\in\Gam_w\}\neq \emptyset $})
 \end{cases}
 \end{equation}
 for
 \begin{equation}
 \label{3-h1}
 \begin{split}
 &\bar {\bf F}^{(\ry_0)}(s):=\frac{\bar{\bf F}(\ry_0+ds)-\bar{\bf F}(y_0)-\sum_{j=1}^n(a_{ij}(\ry_0+ds)-a_{ij}(\ry_0))\der_j v(\ry_0)}{d^{\alp}},\\
 &\mathfrak{g}^{(\ry_0)}(s):=\frac{(\sum_{j=1}^n a_{ij}(\ry_0+ds)\der_jv(\ry_0))\cdot{\bf n}_w(\ry_0+ds)-(\sum_{j=1}^n a_{ij}(\ry_0)\der_jv(\ry_0))\cdot{\bf n}_w(\ry_0)}{d^{\alp}}.
 \end{split}
 \end{equation}
 By using Lemma \ref{lemma-3-1}(b), \eqref{normalization} and \eqref{3-g3}, one can find a constant $C_F$ depending only on the data and $\alp$ so that, for any $\ry_0\in\ol{\N}\setminus (\corners)$, $\bar{\bf F}^{(\ry_0)}$ and $\mathfrak{g}^{(\ry_0)}$ satisfy
 \begin{equation}
 \label{3-h2}
 \|\bar{\bf F}^{(\ry_0)}\|_{1,\alp,\mcl{B}_1(0)}
 +\|\mathfrak{g}^{(\ry_0)}\|_{1,\alp,\mcl{B}_1(0)}\le C_F.
 \end{equation}
Then, it follows from \cite[Sections 6.3 and 6.7]{GilbargTrudinger}, \eqref{4-b2} and \eqref{3-h2} that $v^{(\ry_0)}$ is in $C^{2,\alp}(\ol{\mcl{B}_{1/2}(0)})$ and satisfies
 \[
  \|v^{(\ry_0)}\|_{2,\alp,\mcl{B}_{1/2}(0)}\le
  C(C_B+C_F)=:C_*
 \]
 for the constant $C_B$ in \eqref{3-g3}. We emphasize that the constant $C_*$ above is independent of $\ry_0\in \ol{\N}\setminus(\corners)$. Therefore, re-scaling back gives
\begin{equation*}
\|v\|_{2,\alp,\N}^{(-1-\alp,\corners)}\le C_{**}
\end{equation*}
for a constant $C_{**}$ depending only on the data and $\alp$.

Similarly, we consider $W$ as a solution of the equation
\begin{equation*}
\Delta W=f+\der_zB(\Phi_0,D\vphi_0)W+\der_{\q}B(\Phi_0,D\vphi_0)\cdot Dv=:\bar{\mff}_*\quad\tx{in}\quad \N.
\end{equation*}
Since $\bar{\mff}_*\in C^{\alp}(\N)$ with satisfying $\|\bar{\mff}_*\|_{\alp,\N}\le C$ by \eqref{3-g3}, we can repeat the scaling argument above to conclude that
\begin{equation*}
\|W\|_{2,\alp,\N}^{(-1-\alp,\corners)}\le \mcl{C}^*.
\end{equation*}
\end{proof}

\section{Proof of Proposition \ref{theorem2}}
\label{section4}
Fix $\alp\in(0,1)$. If $M\sigma\le 2\min\{\delta_1,\delta_2\}$ for $\delta_1, \delta_2$ in \eqref{delta1} and \eqref{delta2} respectively, then for each $(\tpsi,\tPsi)\in\mcl{K}_M$, functions ${\bf F}(\rx, \tPsi, D\tpsi)$, $f(\rx,\tPsi,D\tpsi)$ are well defined in $\N$, and $g(\rx,D\tpsi, \pex,\Psi_{ex})$ is well defined on $\Gam_L$ where ${\bf F}(\rx,z,\q), f(\rx,z,\q)$ and $g(\rx,\q,\pex,\Psi_{ex})$ are defined by \eqref{3-a5}, \eqref{3-a6} and \eqref{3-g8}, respectively.

\begin{lemma}
\label{lemma-4-1}
\label{est-nhfunctions}There exists a positive constant $\delta_3(\le 1)$ depending only on the data so that if $M\sigma\le \delta_3$ in the definition of $\mcl{K}_M$ given in \eqref{3-g1}, then the following properties hold: for any $(\tpsi,\tPsi)\in\mcl{K}_M$,
\begin{align}
\label{nhest-1}
\|{\bf F}(\rx, \tPsi,D\tpsi)\|_{1,\alp,\N}^{(-\alp,\corners)}\le C(M\sigma)^2\\
\label{nhest-2}
\|f(\rx,\tPsi,D\tpsi)\|_{\alp,\N}\le C(M^2\sigma+1)\sigma\\
\label{nhest-3}
\|g(\rx,D\tpsi,\pex,\Psi_{ex})\|_{\alp,\Gam_L}\le C(M^2\sigma+1)\sigma
\end{align}
for a constant $C$ depending only on the data. Furthermore, we have
\begin{equation}
\label{nhest-4}
{\bf F}(\rx,\tPsi,D\tpsi)\cdot\hat{\bf e}_j=0\quad\tx{on}\quad\Gam_0\;\;\tx{for}\;\;j=1,\cdots,n-1.
\end{equation}
\begin{proof}
For $\rho$ defined by \eqref{2-b4}, we can choose a constant $\bar{\delta}>0$ depending only on the data so that if $M\sigma\le \bar\delta$, then there holds
\begin{equation*}
0<\frac 12 \inf_{\N}\rho(\Phi_0,D\vphi_0)\le \rho(\Phi_0+\tPsi,D\vphi_0+D\tpsi)\le 2\sup_{\N}\rho(\Phi_0,D\vphi_0)\quad\tx{in}\quad\N
\end{equation*}
for all $(\tpsi,\tPsi)\in\mcl{K}_M$. Then, by \eqref{2-a5}, the inequality
\begin{equation*}
\mu_0\le p'(\rho(\Phi_0+\tPsi,D\vphi_0+D\tpsi))\le \frac{1}{\mu_0}\quad\tx{in}\quad\N
\end{equation*}
holds for a constant $\mu_0>0$ depending only on the data. Thus \eqref{nhest-1}--\eqref{nhest-3} easily follow from \eqref{data}, \eqref{3-a5}, \eqref{3-a6}, \eqref{3-f9}, \eqref{3-g6}, Lemma \ref{lemma-3-4} and Lemma \ref{lemma-3-5}.

In order to prove \eqref{nhest-4}, we need to take a closer look at the definition of ${\bf F}$ in \eqref{3-a5}. By \eqref{3-a2}, we have $\der_z{\bf A}(\Phi_0+z,D\vphi_0+\q)\parallel D\vphi_0+\q$ for all $(\rx,z,\q)\in \mcl{D}_{3\delta_1}$ where the set $\mcl{D}_{3\delta_1}$ is defined by \eqref{delta1}. Choosing $\delta_3=\min\{\bar{\delta}, \delta_1,\delta_2\}$ so that for any $(\rx,z,\q)\in\N\times\{(z,\q)\in\R\times \R^n:|z|+|\q|\le \delta_3\}(=:\mcl{D}_{\delta_3})$, we have
\begin{equation}
\label{4-a3}
\begin{split}
&\int_0^1 z[\der_zA_i(\Phi_0+\til z, D\vphi_0+\til{\q})]_{(\til z,\til{\q})=(0,{\bf 0})}^{(t z,{\q})}dt=a_1(\rx, z,{\q})D\vphi_0+a_2(\rx, z,{\q}){\q},\\
&\int_0\sum_{j=1}^n{q}_j[\der_{q_j}A_i(\Phi_0,D\vphi_0+\til{\bf q})]_{\til{\bf q}={\bf 0}}^{t{\bf q}}dt=a_{3,i}(\rx, z,{\q}) q_i\quad\tx{for all}\quad i\neq n.
\end{split}
\end{equation}
for $a_1,a_2,a_{3,i}\in C(\ol{\mcl{D}}_{\delta_3},\R)$. For any $(\tpsi,\tPsi)\in\mcl{K}_M$, since $\tpsi=0$ on $\Gam_0$ by the definition of $\mcl{K}_M$ in \eqref{3-g1}, there holds $\der_i\tpsi=0$ on $\Gam_0$ for all $i\neq n$. So if $M\sigma\le \delta_3$, then, by \eqref{3-a5} and \eqref{4-a3}, ${\bf F}(\rx,\tPsi,D\tpsi)$ satisfies \eqref{nhest-4}.
\end{proof}
\end{lemma}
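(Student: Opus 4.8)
The plan is to deduce all four assertions from the structural bounds on ${\bf F},f$ in Lemma~\ref{lemma-3-4} and on $g$ in Lemma~\ref{lemma-3-5}, the only role of $\mcl{K}_M$ being to make the perturbation $(\tpsi,\tPsi)$ small. I would first fix $\delta_3\in(0,1]$, depending only on the data, so small that $\delta_3\le\min\{\delta_1,\delta_2\}$ and that $M\sigma\le\delta_3$ forces $|\tPsi|+|D\tpsi|<2\delta_1$ and $|\Psi_{ex}|+|D\tpsi|<2\delta_2$ in $\ol\N$ (so that Lemmas~\ref{lemma-3-4} and~\ref{lemma-3-5} apply with $(\Phi_0,D\vphi_0)$ inserted) and that $\rho(\Phi_0+\tPsi,D\vphi_0+D\tpsi)$, hence $p'$ evaluated at it, stays in a fixed compact subinterval of $(0,\infty)$. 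The point is that the weighted norm defining $\mcl{K}_M$ majorizes the ordinary $C^{1,\alp}(\ol\N)$-norm, so $\|\tpsi\|_{1,\alp,\N}+\|\tPsi\|_{1,\alp,\N}\le CM\sigma$, and, more precisely, $D\tpsi,D\tPsi\in C^{1,\alp}_{(-\alp,\corners)}(\N)$ and $D^2\tpsi\in C^{0,\alp}_{(1-\alp,\corners)}(\N)$ with norms $\le\frac M2\sigma$; this is where the factor $M\sigma$ comes from.

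For \eqref{nhest-4} I would use only that $\tpsi=0$ on $\Gam_0$ forces $\der_i\tpsi=0$ on $\Gam_0$ for $i=1,\dots,n-1$; since $\vphi_0=\vphi_0(x_n)$, both $D\vphi_0$ and $D\tpsi$ point in the $\hat{\bf e}_n$-direction on $\Gam_0$. Inserting this into \eqref{3-a5} for $F_j$ with $j<n$ and using \eqref{3-a2} --- the relevant quantities $\der_zA_j=(\der_zB)q_j$ and $\der_{q_n}A_j=-Bq_jq_n/p'(B)$ are proportional to the $j$-th slot $q_j$, which vanishes at every point at which \eqref{3-a5} evaluates $A$ when $\rx\in\Gam_0$ --- makes every integrand in \eqref{3-a5} vanish on $\Gam_0$; this is exactly the factorization recorded in \eqref{4-a3}. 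For \eqref{nhest-2} and \eqref{nhest-3} the key observation is that $f$, each $F_i$, and their explicit spatial derivatives are Taylor remainders vanishing to \emph{second} order at $(z,\q)=(0,{\bf 0})$: from Lemma~\ref{lemma-3-4}(a), $f(\rx,0,{\bf 0})=0$, ${\bf F}(\rx,0,{\bf 0})={\bf 0}$ and $|D_{(z,\q)}(f,{\bf F})|\le C(|z|+|\q|)$ give $|f|+|{\bf F}|\le C(|z|+|\q|)^2$, and the same bound for the spatial derivatives follows because in \eqref{3-a5}--\eqref{3-a6} the $\rx$-dependence enters only through the smooth functions $\Phi_0,D\vphi_0$. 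Evaluating at $(\tPsi,D\tpsi)$ and combining the quadratic pointwise bound with the $C^\alp(\ol\N)$-smallness of $\tPsi,D\tpsi$ via the chain rule gives $\|f(\cdot,\tPsi,D\tpsi)\|_{\alp,\N}\le C(M\sigma)^2\le C(M^2\sigma+1)\sigma$, which is \eqref{nhest-2}. For $g$ I would split $g=\frac{\pex-p_{ex,0}}{g_1}+\hat g_2(\rx,\Psi_{ex},D\tpsi)$ as in \eqref{3-g8}: since $g_1$ is bounded away from $0$, the first summand is $O(\sigma)$ in $C^\alp(\ol\Gam_L)$, while $\hat g_2$ vanishes at $(\Psi_{ex},\q)=(0,{\bf 0})$ and, by \eqref{3-g6} and Lemma~\ref{lemma-3-5}, its leading part is linear in $\Psi_{ex}$ (size $\sigma$) and the remainder is $O((M\sigma)^2)$; here one also uses $D\tpsi|_{\Gam_L}\in C^\alp(\ol\Gam_L)$ with norm $\le CM\sigma$, which follows from $\tpsi\in C^{1,\alp}(\ol\N)$. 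This yields \eqref{nhest-3}.

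The main work, and the step that needs care, is \eqref{nhest-1}, because controlling $|{\bf F}(\cdot,\tPsi,D\tpsi)|$ is not enough: by the chain rule $\der_k[{\bf F}(\cdot,\tPsi,D\tpsi)]$ equals, at $(\rx,\tPsi,D\tpsi)$, the explicit spatial derivative of ${\bf F}$ plus $(\der_z{\bf F})\der_k\tPsi$ plus $\sum_j(\der_{q_j}{\bf F})\,\der_k\der_j\tpsi$, and this last term inherits the $\delta_\rx^{\alp-1}$ blow-up of $D^2\tpsi$ near $\corners$, which must be matched against the weight $\delta_\rx^{1-\alp}$ appearing in $\|\cdot\|_{1,\alp,\N}^{(-\alp,\corners)}$. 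I would run a weighted product-rule argument: the explicit spatial derivative of ${\bf F}$ is $O((|z|+|\q|)^2)$-vanishing and $\der_z{\bf F},\der_{q_j}{\bf F}$ are $O(|z|+|\q|)$-vanishing, so after inserting $(\tPsi,D\tpsi)$ and using Lemma~\ref{lemma-3-4}(a)(b) these composed coefficients lie in $C^\alp(\ol\N)$ with norm $\le CM\sigma$; $D\tPsi\in C^\alp(\ol\N)$ and $D^2\tpsi\in C^{0,\alp}_{(1-\alp,\corners)}(\N)$ have norms $\le\frac M2\sigma$; and the product inclusions $C^\alp\cdot C^\alp\subset C^\alp$ and $C^\alp\cdot C^{0,\alp}_{(1-\alp,\corners)}\subset C^{0,\alp}_{(1-\alp,\corners)}$, whose weight exponents add correctly, turn each term of $\der_k[{\bf F}(\cdot,\tPsi,D\tpsi)]$ into an element of $C^{0,\alp}_{(1-\alp,\corners)}(\N)$ of size $C(M\sigma)^2$. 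Since $[{\bf F}]_{1,\alp,\N}^{(-\alp,\corners)}=[D{\bf F}]_{0,\alp,\N}^{(1-\alp,\corners)}$ and $\delta_\rx^{1-\alp}|D{\bf F}|\le C(M\sigma)^2$, this gives $\|{\bf F}(\cdot,\tPsi,D\tpsi)\|_{1,\alp,\N}^{(-\alp,\corners)}\le C(M\sigma)^2$. The delicate points I anticipate are exactly this bookkeeping of weighted H\"older product rules --- in particular verifying that $D^2\tpsi$ really lands in $C^{0,\alp}_{(1-\alp,\corners)}(\N)$ with the stated norm and that $1-\alp$ is precisely the exponent cancelling the $\alp-1$ blow-up --- while everything else reduces to Lemmas~\ref{lemma-3-4} and~\ref{lemma-3-5}.
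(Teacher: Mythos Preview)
Your proposal is correct and follows essentially the same approach as the paper: fix $\delta_3$ so that $\rho$ and $p'(\rho)$ stay in a fixed compact interval, then invoke Lemmas~\ref{lemma-3-4} and~\ref{lemma-3-5} for \eqref{nhest-1}--\eqref{nhest-3}, and for \eqref{nhest-4} use that $\der_i\tpsi=0$ on $\Gam_0$ for $i<n$ together with the explicit form \eqref{3-a2} to see each $F_j$ with $j<n$ vanishes there. The paper's proof is terse (it says the estimates ``easily follow''), whereas you spell out the weighted H\"older product-rule bookkeeping for \eqref{nhest-1}; that extra detail is correct and is exactly what the paper is suppressing.
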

Now we are in position to prove Proposition \ref{theorem2}.
\begin{proof}[Proof of Proposition \ref{theorem2}]
If $M^2\sigma\le 1$, then, it follows from \eqref{data}, Proposition \ref{proposition-3-1} and Lemma \ref{lemma-4-1} that the elliptic system \eqref{3-g2} with boundary conditions of \eqref{3-f6} and \eqref{3-g4} has a unique solution $(\hat{\psi},\hat{\Psi})\in [C^{2,\alp}_{(-1-\alp,\corners)}(\N)]^2$, which satisfies
\begin{equation}
\label{apriori-est}
\|\hat{\psi}\|_{2,\alp,\N}^{(-1-\alp,\corners)}
+\|\hat{\Psi}\|_{2,\alp,\N}^{(-1-\alp,\corners)}
\le 2C^{\flat}\sigma
\end{equation}
for $C^{\flat}$ depending only on the data and $\alp$.

Next, we estimate $\mathfrak{I}(\tpsi_1,\tPsi_1)-\mathfrak{I}(\tpsi_2,\tPsi_2)$ in $C^{2,\alp}_{(-1-\alp,\corners)}(\N)$ for $(\tpsi_k,\tPsi_k)\in\mcl{K}_M (k=1,2)$. For each $k=1,2$, set
\begin{equation*}
\begin{split}
&(\hat\psi_{k},\hat\Psi_{k}):=\mathfrak{I}(\tpsi_k,\tPsi_k)\;\;\tx{in}\;\;N\\
&{\bf F}^{(k)}:={\bf F}(\rx,\tPsi_k, D\tpsi_k),\quad f^{(k)}:=f(\rx,\tPsi_k,D\tpsi_k)\;\;\tx{in}\;\;\N\\
&g^{(k)}:=g(\rx,D\tpsi_k,\pex,\Psi_{ex})\;\;\tx{on}\;\;\Gam_{ex}.
\end{split}
\end{equation*}
Then $(v^*,W^*):=(\hat\psi_1,\hat\Psi_1)-(\hat\psi_2,\hat\Psi_2)$ satisfy
\begin{equation*}
\begin{split}
&\begin{cases}
L_1(v^*,W^*)=\Div({\bf F}^{(1)}-{\bf F}^{(2)})\\
L_2(v^*,W^*)=f^{(1)}-f^{(2)}
\end{cases}\quad\tx{in}\quad\N,\\
&v^*=0\quad\tx{on}\quad\Gam_0,\quad B_{\q}(\Phi_0,D\vphi_0)\cdot Dv^*=g^{(1)}-g^{(2)}\quad\tx{on}\quad \Gam_L,\\
&\der_{{\bf n}_w}v^*=\der_{{\bf n}_w}W^*=0\quad\tx{on}\quad\Gam_w,\\
&W^*=0\quad\tx{on}\quad \Gam_0\cup\Gam_L.
\end{split}
\end{equation*}
By Lemma \ref{lemma-3-4}(a) and Lemma \ref{lemma-3-5}(a), there exists a constant $C$ depending on the data and $\alp$ such that
\begin{equation}
\label{nhest-cont}
\begin{split}
&\|{\bf F}^{(1)}-{\bf F}^{(2)}\|_{1,\alp,\N}^{(-\alp,\corners)}
+\|f^{(1)}-f^{(2)}\|_{\alp,\N}
+\|g^{(1)}-g^{(2)}\|_{\alp,\Gam_L}\\
&\le C(1+M)\sigma(\|\tpsi_1-\tpsi_2\|_{2,\alp,\N}^{(-1-\alp,\corners)}
+\|\tPsi_1-\tPsi_2\|_{2,\alp,\N}^{(-1-\alp,\corners)}).
\end{split}
\end{equation}
By Corollary \ref{corollary-est} and \eqref{nhest-cont}, one can find a constant $C^{\natural}$ depending on the data and $\alp$ so that $(v^*,W^*)$ satisfy
\begin{equation}
\label{est-cont}
\begin{split}
&\|v^*\|_{2,\alp,\N}^{(-1-\alp,\corners)}
+\|W^*\|_{2,\alp,\N}^{(-1-\alp,\corners)}\\
&\le C^{\natural}(1+M)\sigma(\|{\tpsi}_1-{\tpsi}_2\|_{2,\alp,\N}^{(-1-\alp,\corners)}
+\|\tPsi_1-\tPsi_2\|_{2,\alp,\N}^{(-1-\alp,\corners)}).
\end{split}
\end{equation}
To complete the proof, we choose $M$ and $\sigma$ as follows:
\begin{itemize}
\item[(i)] Choose $M$ to be
\begin{equation}
\label{M}
M=\max\{1,4C^{\flat}\}
\end{equation}
for $C^{\flat}$ as in \eqref{apriori-est}. Then $\mathfrak{I}$ maps $\mcl{K}_M$ into itself.
\item[(ii)] Next we choose $\sigma_1$ as
\begin{equation}
\label{sigma}
\sigma_1=\frac {1}{4M} \min\{\delta_1, \delta_2, \delta_3,1,\frac 1M, \frac{1}{2MC^{\natural}}\}
\end{equation}
for $\delta_1, \delta_2, \delta_3$ as in \eqref{delta1}, \eqref{delta2}, Lemma \ref{lemma-4-1}, respectively. Then \eqref{est-cont} implies that $\mathfrak{I}$ is contracting in $[C^{2,\alp}_{(-1-\alp,\corners)}(\N)]^2$.
\end{itemize}
For $M$ as in \eqref{M} and for any $\sigma\in(0,\sigma_1]$, $\mcl{K}_M$ is a nonempty Banach space equipped with the norm of $\|\cdot\|_{2,\alp,\N}^{(-1-\alp,\corners)}$. Then the contraction mapping principle implies that for any $\sigma\in(0,\sigma_1]$, the iteration mapping $\mathfrak{I}$ has a unique fixed point $(\psi_*,\Psi_*)$ in $\mcl{K}_M$. If necessary, we can further reduce $\sigma_1>0$ depending only on the data so that wherever $\sigma\in(0,\sigma_1]$, any fixed point of $\mathfrak{I}$ in $\mcl{K}_M$ satisfies \eqref{subsonicity}.
\end{proof}

\appendix

\section{Nonconstant background density $b$ for one-dimensional flow }
\label{section-nonconstant-b}
\begin{proposition}
\label{proposition-app1}
Fix a constant $L>0$, and a smooth positive function $b(x_n)$ on $[0,L]$. Then there exists a set of parameters $\mathfrak{P}_0^*\subset \R^2\times \R^+$ so that for any $(\bar\Phi_{en}, \msB_0, \bar p_{ex})\in \mathfrak{P}_0^*$, the system of \eqref{2-b2} and \eqref{2-b3d} in $\N$ with the boundary conditions \eqref{2-b8}--\eqref{bcvphi} has a unique one-dimensional smooth solution $(\vphi,\Phi)$ satisfying $\rho(\Phi, |\nabla\vphi|^2)\ge \underline{\rho}>0$ and \eqref{ellipticity} on $[0,L]$ for positive constants $\underline{\rho}$ and $\nu_1$ depending on $\bar{\Phi}_{en}, \msB_0, \bar{p}_{ex}$, $b$ and $L$.

\begin{proof}
Fix a constant $J_0>0$, and we first consider the initial value problem \eqref{ibp} on the interval $(0,L)$ with $b_0$ being replaced by a nonconstant positive function $b$. The initial data $(\rho_0,E_0)$ will be specified later. If $H'(\rho)\neq 0$ on $(0,L)$ for $H'(\rho)$ in \eqref{2-c8}, \eqref{ibp} is equivalent to
\begin{equation}
\label{ibp-nonconst-b}
\begin{cases}
\rho'=\frac{\rho^3E}{\rho^2p'(\rho)-J_0^2},\\
E'=\rho-b,
\end{cases}\quad
(\rho,E)(0)=(\rho_0,E_0),\quad u=\frac{J_0}{\rho}.
\end{equation}

\emph{Claim: For any given constant $J_0>0$, there exists a nonempty set $\mathfrak{P}_1^*(J_0)\subset \R^+\times \R$ so that for any $(\rho_0, E_0)\in \mathfrak{P}_1^*(J_0)$, the initial value problem \eqref{ibp-nonconst-b} has a unique $C^1$ solution $(\rho, E)(x_n)$ on the interval $[0,L]$ with satisfying $\rho>0$ and the inequality \eqref{2-a7} on $[0,L]$ for some positive constant $\nu_1$. Here, the choice of $\nu_1$ depends on $\rho_0, E_0, J_0$ and $L$.}

Once this claim is proved, one can repeat the argument in the proof of Proposition \ref{lemma-2-1} to complete the proof of Proposition \ref{proposition-app1}. So we only need to prove the claim.

Fix $J_0>0$ and $\rho_0\ge \sup_{[0,L]}b+\epsilon_0$ satisfying $p'(\rho_0)-(\frac{J_0}{\rho_0})^2\ge \nu_1>0$ for positive constants $\epsilon_0$ and $\nu_1$. We will show that if $E_0\ge \epsilon_1$ for some $\epsilon_1>0$, then the initial value problem \eqref{ibp-nonconst-b} has a unique $C^1$ solution $(\rho, E)$ on the interval $[0,L]$. Fix a constant $E_0\ge \epsilon_1$. Since $p'(\rho)$ is $C^1$, \eqref{ibp-nonconst-b} has a unique $C^1$ solution $(\rho, E)$ on $[0,\eps]$ for a constant $\eps>0$ so that the inequalities
\begin{equation*}
\rho\ge \sup_{[0,L]}b+\frac{\epsilon_0}{2},\quad
E\ge \frac{\epsilon_1}{2}\quad\tx{and}\quad p'(\rho_0)-\left(\frac{J_0}{\rho_0}\right)^2\ge \frac{\nu_1}{2}
\end{equation*}
 hold on $[0,\eps]$ where $\eps$ depends on $E_0$.
Using two equations in \eqref{ibp-nonconst-b}, one can directly check that
\begin{equation*}
\bigl(p'(\rho)-\frac{J_0^2}{\rho^2}\bigr)'=\frac{(\rho p''(\rho)+2(\frac{J_0}{\rho})^2)E}{p'(\rho)-(\frac{J_0}{\rho})^2}.
\end{equation*}
So if $\rho,E$ and $p'(\rho)-(\frac{J_0}{\rho})^2$ satisfy the three inequalities above on the $[0,\eps]$, then we get $\rho'(\eps)>0$, $E'(\eps)>0$ and $(p'(\rho)-(\frac{J_0}{\rho})^2)'(\eps)>0$ so \eqref{ibp-nonconst-b} is uniquely solvable beyond the interval $[0,\eps]$, moreover $(\rho, E)$ are increasing functions. Therefore, we can uniquely solve \eqref{ibp-nonconst-b} up to $x_n=L$, and the solution $(\rho, E)$ satisfy $\rho\ge \rho_0$, $E\ge E_0$ and $p'(\rho)-(\frac{J_0}{\rho})^2\ge \nu_1$ on $[0,L]$. For each $J_0>0$, we have shown that there exists a nonempty parameter set $\til{\mathfrak{P}}_1^*(J_0)$ so that, for any $(\rho_0,E_0)\in \til{\mathfrak{P}}_1^*(J_0)$, \eqref{ibp-nonconst-b} has a unique solution $(\rho, E)$ on $[0,L]$ with satisfying $\rho>0$ and \eqref{ellipticity} on $[0,L]$. Since $\til{\mathfrak{P}}_1^*(J_0)$ is nonempty, we can find the maximal set $\mathfrak{P}_1^*(J_0)$ such that, for any $(\rho_0,E_0)\in{\mathfrak{P}}_1^*(J_0)$, \eqref{ibp-nonconst-b} has a unique $C^1$ solution on the interval $[0,L]$ with satisfying $\rho>0$ and \eqref{ellipticity} on $[0,L]$. Set $\mathfrak{P}_1^*=\underset{J_0>0}{\cup} \mathfrak{P}_1^*(J_0)$. We repeat the argument in Proposition \ref{lemma-2-1} to get a set $\mathfrak{P}_0^*$ of parameters $(\bar{\Phi}_{en}, \msB_0, \bar{p}_{ex})$ by using a one-to-one and onto correspondence between $(\rho_0,E_0)\in\mathfrak{P}_1^*$ and $(\bar{\Phi}_{en}, \msB_0, \bar{p}_{ex})$. This completes the proof.
\end{proof}
\end{proposition}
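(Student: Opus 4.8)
The plan is to mimic the structure of the proof of Proposition \ref{lemma-2-1}: reduce a one-dimensional solution of \eqref{2-b2}--\eqref{2-b3d} to an initial value problem for a first-order ODE system in $(\rho,E)$, produce by a continuation argument a nonempty set of initial data for which that system is globally solvable on $[0,L]$ and stays strictly subsonic, and then transfer this to an existence statement phrased in terms of the boundary data $(\bar\Phi_{en},\msB_0,\bar p_{ex})$ via the one-to-one correspondence between initial data and boundary data established in Proposition \ref{lemma-2-1}. Uniqueness for a fixed admissible triple is obtained, again as in Proposition \ref{lemma-2-1}, from a maximum-principle argument applied to the difference of two solutions.

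First I would repeat the reduction leading to \eqref{2-c2}: writing $u=\vphi'$, $E=\Phi'$, the continuity equation forces $\rho u\equiv J_0$ for a constant $J_0>0$, and the remaining equations become $(\rho u^2+p(\rho))'=\rho E$ and $E'=\rho-b$. Using $u=J_0/\rho$ together with the identity $H'(\rho)=\frac{1}{\rho^3}(\rho^2 p'(\rho)-J_0^2)$ from \eqref{2-c8}, wherever $H'(\rho)\neq 0$ this system is equivalent to \eqref{ibp-nonconst-b}, namely $\rho'=\rho^3 E/(\rho^2 p'(\rho)-J_0^2)$, $E'=\rho-b$, with $u=J_0/\rho$. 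So it suffices to study \eqref{ibp-nonconst-b}, bearing in mind that the reduction is legitimate only while the flow is strictly subsonic, i.e. while $\rho^2 p'(\rho)-J_0^2>0$.

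The heart of the argument is a bootstrap/continuation argument for \eqref{ibp-nonconst-b}. Fix $J_0>0$, choose $\rho_0\ge \sup_{[0,L]}b+\epsilon_0$ with $p'(\rho_0)-(J_0/\rho_0)^2\ge\nu_1>0$ (possible since $p'(\rho)\to\infty$ by \eqref{2-a5}), and $E_0\ge\epsilon_1>0$. On a short interval $[0,\eps]$ there is a unique $C^1$ solution with $\rho\ge\sup b+\tfrac{\epsilon_0}{2}$, $E\ge\tfrac{\epsilon_1}{2}$ and $p'(\rho)-(J_0/\rho)^2\ge\tfrac{\nu_1}{2}$. In this region one checks directly that $\rho'>0$, that $E'=\rho-b>0$, and --- differentiating the subsonic quantity and using $p''\ge 0$ --- that $\bigl(p'(\rho)-J_0^2/\rho^2\bigr)'=(\rho p''(\rho)+2(J_0/\rho)^2)E/(p'(\rho)-(J_0/\rho)^2)>0$. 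Thus all three inequalities are self-improving: the solution stays in this region and is increasing, with its growth controlled by the (finite) right-hand sides of \eqref{ibp-nonconst-b}, so it extends to all of $[0,L]$, remains $C^1$ (and as smooth as $p$ and $b$ allow, by bootstrapping), and satisfies $\rho\ge\rho_0>0$ and $p'(\rho)-u^2\ge\nu_1$ on $[0,L]$. This exhibits a nonempty set $\til{\mathfrak{P}}_1^*(J_0)$ of admissible $(\rho_0,E_0)$; I would then take the maximal such set $\mathfrak{P}_1^*(J_0)$ and set $\mathfrak{P}_1^*=\cup_{J_0>0}\mathfrak{P}_1^*(J_0)$.

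Finally, I would transfer from initial data to boundary data exactly as in Proposition \ref{lemma-2-1}: set $\vphi(x_n)=\int_0^{x_n}J_0/\rho(t)\,dt$ and recover $\Phi$ from $\Phi=\msB=\tfrac12 u^2+\int_{k_0}^{\rho}p'(\varrho)/\varrho\,d\varrho$, then read off $\msB_0=\msB(L)$, $\bar p_{ex}=p(\rho(L))$ and $\bar\Phi_{en}=\Phi(0)-\Phi(\rx_0)$, which determines a unique triple; defining $\mathfrak{P}_0^*$ as the image of $\mathfrak{P}_1^*$ under this map gives existence. For uniqueness of the one-dimensional solution with a fixed admissible triple, the pressure condition fixes $\rho(L)=\rho_{ex,0}$ and the Bernoulli relation at $x_n=L$ then fixes $J_0$, so each solution's $\Phi$ can be written as $H(\rho)$ with $H'>0$ on the subsonic range; writing $\Phi^{(1)}-\Phi^{(2)}$ as the solution of a linear second-order ODE with a nonnegative zeroth-order coefficient and homogeneous Dirichlet data, the maximum principle forces $\Phi^{(1)}=\Phi^{(2)}$, hence $\rho^{(1)}=\rho^{(2)}$ and $\vphi^{(1)}=\vphi^{(2)}$. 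The main obstacle is precisely the global solvability step: because $b$ is nonconstant the translation invariance available for constant $b$ is lost, so one cannot look for constant solutions and must instead exploit the monotonicity structure ($\rho'>0$, $E'>0$, and the subsonic quantity increasing) to run an open--closed continuation; one must also stay alert that \eqref{ibp-nonconst-b} is equivalent to the original system only while $H'(\rho)\neq 0$, which is exactly what the bootstrap guarantees.
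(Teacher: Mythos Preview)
Your proposal is correct and follows essentially the same approach as the paper: reduce to the first-order system \eqref{ibp-nonconst-b}, choose initial data with $\rho_0\ge\sup b+\epsilon_0$ subsonic and $E_0>0$, use the monotonicity of $\rho$, $E$, and the subsonic quantity $p'(\rho)-J_0^2/\rho^2$ to run a continuation argument to $x_n=L$, and then transfer to boundary data exactly as in Proposition~\ref{lemma-2-1}. The only difference is that you spell out the uniqueness argument explicitly, whereas the paper simply refers back to Proposition~\ref{lemma-2-1} for this step.
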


\section{Perturbation of nozzle}
\label{section-perturbation}
For $n\ge 2$, let $\Lambda_0$ be an open, connected and bounded subset of $\R^{n-1}$ with $C^3$ boundary $\der\Lambda_0$, and set $\N_0$ to be an $n$-dimensional nozzle defined by
\begin{equation}
\label{app-a1}
\N_0:=\Lambda_0\times (0,L).
\end{equation}
For $0<\alp<1$, let $G:\Lambda_0\times (-L,2L)\to \R^{n-1}$ be a $C^{2,\alp}$ mapping such that $G(\cdot, x_n):\Lambda_0\to G(\Lambda_0\times\{x_n\})\subset \R^{n-1}$ is a diffeomorphism for every $x_n\in (-L, 2L)$ satisfying
\begin{equation}
\label{app-a2}
\|G(\cdot, x_n)-Id_{n-1}\|_{2,\alp,\Lambda_0}\le \sigma
\end{equation}
for a small constant $\sigma<0$ which will be specified later. For such a mapping $G$, we define a perturbation of $\N_0$ by
\begin{equation}
\label{app-a3}
\N_G:=\{(G(\rx),x_n)\in \R^n:\rx=(\rx',x_n), \rx'\in \Lambda_0, x_n\in(0,L)\}.
\end{equation}
$\N_G$ is a small perturbation of $\N_0$ in the sense that the wall boundary $G(\der\Lambda_0\times(0,L))$ of $\N_G$ is a perturbation of the wall boundary $\der\Lambda_0\times(0,L)$ of $\N_0$. Let $\mcl{T}$ denote the transformation $\mcl{T}(\rx',x_n)=(G(\rx',x_n),x_n)=:(\ry',y_n)$.
\begin{theorem}
\label{theorem-domain-pert}
Let $\N_0$ be defined by \eqref{app-a1}.
Fix $b_0>0$, $L>0$ and any $\alp\in(0,1)$, and let the parameter set $\mathfrak{P}_0$ be as in Proposition \ref{lemma-2-1}.
Given $(\bar\Phi_{en,0}, \msB_{0,0}, p_{ex,0})\in\mathfrak{P}_0$, let $(\vphi_0,\Phi_0)$ be the corresponding background solution.
We assume that $b, (\bar{\Phi}_{en},\bar\Phi_{ex},\pex)$ and $\msB_{0}$ are given small perturbations of $(\bar\Phi_{en,0},0, p_{ex,0})\in\mathfrak{P}_0$ and $\msB_{0,0}$, respectively, in the following sense
\begin{equation}
\label{data-app}
\begin{split}
&\|b-b_0\|_{\alp,\N_G}\le\sigma,\\
&\|\bar\Phi_{en}-\bar\Phi_{en,0}\|_{2,\alp,\mcl{T}(\Gam_0)}
+\|\bar\Phi_{ex}\|_{2,\alp,\mcl{T}(\Gam_L)}+\|\pex- p_{ex,0}\|_{\alp,\mcl{T}(\Gam_L)} \le \sigma,\\
&|\msB_0-\msB_{0,0}|\le \sigma.
\end{split}
\end{equation}
Also, we assume that $\bar{\Phi}_{en}$ and $\bar{\Phi}_{ex}$ satisfy
 \begin{equation}
\label{compatibility-app}
\der_{{\bf n}_w}\bar\Phi_{en}=0\;\;\tx{on}\;\;\mcl{T}(\ol{\Gam_0}\cap\ol{\Gam_w}),\quad
\der_{{\bf n}_w}\bar{\Phi}_{ex}=0\;\;\tx{on}\;\;\mcl{T}(\ol{\Gam_L}\cap\ol{\Gam_w})
\end{equation}
for inward unit normal vector ${\bf n}_w$ on $\mcl{T}(\der\Lambda_0\times (-L,2L))$, and that the perturbation $\mcl{T}$ satisfies \eqref{app-a2} and
\begin{equation}
\label{app-a4}
D_{\rx}\mcl{T}={I}_n\quad\tx{on}\quad \Gam_0\cup\Gam_L.
\end{equation}
Then, there exists a constant $\hat\sigma>0$ depending on the data and $\alp$ such that wherever $\sigma\in(0,\hat{\sigma}]$, the nonlinear boundary value problem
\begin{align}
\label{nbvp-1-app}
&\begin{cases}
div(\rho(\Phi,|\nabla\vphi|^2)\nabla\vphi)=0\\
\Delta\Phi=\rho(\Phi,|\nabla\vphi|^2)-b
\end{cases}\quad\tx{in}\quad\N_G\\
\label{nbvp-2-app}
&\vphi=0\;\;\tx{on}\;\;\Gam_0, \quad p(\rho(\Phi,|\nabla\vphi|^2))=p_{ex}\;\;\tx{on}\;\;\mcl{T}(\Gam_L)\\
\label{nbvp-3-app}
&\Phi=\begin{cases}
\msB_0+\bar{\Phi}_{en}&\tx{on}\;\;\mcl{T}(\Gam_0)\\
\msB_0+\bar{\Phi}_{ex}&\tx{on}\;\;\mcl{T}(\Gam_L)
\end{cases}\\
\label{nbvp-4-app}
&\der_{{\bf n}_w}\vphi=\der_{{\bf n}_w}\Phi=0\;\;\tx{on}\;\;\mcl{T}(\Gam_w).
\end{align}
has a unique solution $(\vphi,\Phi)\in[C^{2,\alp}_{(-1-\alp,{\mcl T}(\Gam_0)\cup {\mcl T}(\Gam_L))}(\N_G)]^2$ satisfying the following properties:
\begin{itemize}
\item[(a)] The equations in \eqref{nbvp-1-app} form a uniformly elliptic system in $\N_G$. Equivalently, the solution $(\vphi,\Phi)$ satisfies the inequality
    \begin{equation*}
    p'(\rho(\Phi,|\nabla\vphi|^2))-|\nabla\vphi|^2\ge \bar\nu>0\quad\tx{in}\quad\ol{\N_G}
    \end{equation*}
    for a positive constant $\bar{\nu}$, i.e., the flow governed by $(\vphi,\Phi)$ is subsonic;

\item[(b)] $(\vphi,\Phi)$ satisfy the estimate
   \begin{equation}
\label{2-c1}
\|\vphi-\vphi_0\|_{2,\alp,\N_G}^{(-1-\alp,\mcl{T}(\Gam_0)\cup\mcl{T}(\Gam_L))}+
\|\Phi-\Phi_0\|_{2,\alp,\N_G}^{(-1-\alp, \mcl{T}(\Gam_0)\cup\mcl{T}(\Gam_L))}\le C\sigma,
\end{equation}
for
$\sigma$ in \eqref{data-app}.
The constants $\bar\nu$ and $C$ depend only on  $b_0, L, \bar\Phi_{en,0}, \msB_{0,0}, p_{ex,0}, n,\Lambda$ and $\alp$.
\end{itemize}
\end{theorem}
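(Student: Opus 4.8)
The plan is to pull the boundary value problem \eqref{nbvp-1-app}--\eqref{nbvp-4-app} on $\N_G$ back to the fixed reference nozzle $\N_0$ through $\mcl{T}$, and then to rerun the linearization of Section \ref{Section-linearization}, the a priori estimates of Section \ref{section-3}, and the contraction argument of Section \ref{section4}, treating the terms generated by the domain perturbation as extra data of size $O(\sigma)$. Given a prospective solution $(\vphi,\Phi)$ on $\N_G$, set $(\vphi^\sharp,\Phi^\sharp)(\rx):=(\vphi,\Phi)(\mcl{T}(\rx))$ for $\rx\in\N_0$. Since $\nabla_\ry=(D_\rx\mcl{T})^{-\top}\nabla_\rx$ and $\det D_\rx\mcl{T}$ stays bounded away from $0$ once $\sigma$ is small, multiplying the two equations in \eqref{nbvp-1-app} by $\det D_\rx\mcl{T}$ converts \eqref{nbvp-1-app} into a quasilinear second order system on $\N_0$ whose principal parts are governed by the symmetric matrix $\mcl{A}_G:=(\det D_\rx\mcl{T})(D_\rx\mcl{T})^{-1}(D_\rx\mcl{T})^{-\top}$ and whose remaining ingredients depend on $D_\rx\mcl{T}$. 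By \eqref{app-a2} we have $\mcl{A}_G=I_n+O(\sigma)$ in $C^{1,\alp}(\ol{\N_0})$, and all the $\mcl{T}$-dependent quantities are $O(\sigma)$-close in $C^{1,\alp}$ to the corresponding ones in \eqref{nbvp-1}; moreover \eqref{app-a4} gives $D_\rx\mcl{T}=I_n$ (hence $\mcl{A}_G=I_n$) on $\Gam_0\cup\Gam_L$, so the entrance and exit conditions \eqref{nbvp-2-app}--\eqref{nbvp-3-app} become exactly \eqref{nbvp-2}--\eqref{nbvp-3} on $\N_0$ with their data replaced by $\mcl{T}$-pullbacks, and the slip conditions \eqref{nbvp-4-app} become homogeneous conormal conditions for the perturbed principal coefficients on $\Gam_w$. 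Because $\bar\Phi_{en,0}$, $0$, $p_{ex,0}$, $b_0$ are constants and composition with the $C^{2,\alp}$ diffeomorphism $\mcl{T}$ preserves smallness, the pulled-back data still satisfy \eqref{data} and \eqref{compatibility} (the latter using $D_\rx\mcl{T}=I_n$ on $\Gam_0\cup\Gam_L$). Also $\mcl{T}$ preserves the last coordinate, so the background $(\vphi_0,\Phi_0)$, a function of $x_n$ alone, is invariant under $\mcl{T}$.

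Next I would linearize about $(\vphi_0,\Phi_0)$, which is defined on $\ol{\N_0}$ just as on $\ol{\N}$. Writing $(\psi,\Psi):=(\vphi^\sharp-\vphi_0,\Phi^\sharp-\Phi_0)$ and subtracting, the left-hand sides reproduce the operators $L_1,L_2$ of \eqref{3-a3}--\eqref{3-a4} up to principal and lower order corrections of size $O(\sigma)$ coming from $\mcl{A}_G-I_n$; the right-hand sides consist of the quadratic nonlinearities $\Div{\bf F},f$ of \eqref{3-a5}--\eqref{3-a6}, the inhomogeneity $b_0-b\circ\mcl{T}$, and a new residual $\mcl{R}$ equal to the transformed operators applied to $(\vphi_0,\Phi_0)$. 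Since $(\vphi_0,\Phi_0)$ solves the unperturbed system on $\N_0$ and $\|\mcl{T}-\mathrm{Id}\|_{2,\alp}\le\sigma$, the residual is a product of $O(\sigma)$ coefficient differences with smooth background data, so $\|\mcl{R}\|^{(-\alp,\corners)}_{1,\alp,\N_0}+\|\mcl{R}\|_{\alp,\N_0}\le C\sigma$ and likewise $\|b_0-b\circ\mcl{T}\|_{\alp,\N_0}\le C\sigma$. Thus $(\psi,\Psi)$ solves a problem of exactly the shape \eqref{3-g2} with boundary conditions \eqref{3-f6} and \eqref{3-g4}, except that the linear operators and conormal coefficients carry $O(\sigma)$ perturbations and the data are enlarged by $C\sigma$.

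I would then re-derive the a priori estimate of Proposition \ref{proposition-3-1} and Corollary \ref{corollary-est} for this perturbed linear problem. The only place where the precise form of the operators is essential is the $H^1$ estimate of Lemma \ref{lemma-3-2}, whose coercivity \eqref{3-c6}--\eqref{3-c7} rests on the structural cancellation $\der_z{\bf A}+\der_{\q}B=0$ of Lemma \ref{lemma-3-1}(c). Under the change of variables the cross terms in the bilinear form no longer cancel identically, but their sum is controlled by $C\|D_\rx\mcl{T}-I_n\|_{C^0(\ol{\N_0})}(\|D\xi\|_{L^2}^2+\|D\eta\|_{L^2}^2)\le C\sigma(\|D\xi\|_{L^2}^2+\|D\eta\|_{L^2}^2)$, so for $\sigma$ small they are absorbed by the uniform coercivity constant $\lambda_0$, Lax--Milgram still applies, and the $H^1$ bound \eqref{3-c4} holds; the subsequent $C^\alp$ and $C^{1,\alp}$ estimates of Lemmas \ref{lemma-3-3} and \ref{lemma-3-6}, and the weighted $C^{2,\alp}$ estimate in the proof of Proposition \ref{proposition-3-1}, go through verbatim with the coefficient perturbations absorbed into the constants. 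With Corollary \ref{corollary-est} in hand, I would set up the iteration map $\mathfrak{I}$ and the set $\mcl{K}_M$ as in \S\ref{subsec-iteration}: Lemma \ref{lemma-4-1} acquires one extra additive $C\sigma$ term from $\mcl{R}$, and the contraction estimate \eqref{est-cont} an extra $O(\sigma)$ contribution, both harmless. Choosing $M$ and then $\sigma$ as in \eqref{M}--\eqref{sigma} (shrinking $\sigma$ further if needed) makes $\mathfrak{I}$ a contraction of $\mcl{K}_M$ into itself, so it has a unique fixed point $(\psi,\Psi)$; then $(\vphi,\Phi):=(\vphi_0+\psi,\Phi_0+\Psi)\circ\mcl{T}^{-1}$ is the unique solution of \eqref{nbvp-1-app}--\eqref{nbvp-4-app} on $\N_G$. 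Subsonicity (a) follows from the uniform $C^{1,\alp}$ bound exactly as in Theorem \ref{theorem1}, and estimate (b) follows from the $O(\sigma)$ weighted $C^{2,\alp}$ bound for $(\psi,\Psi)$ on $\N_0$ (valid since $(\psi,\Psi)\in\mcl{K}_M$), the identity $(\vphi-\vphi_0,\Phi-\Phi_0)=(\psi,\Psi)\circ\mcl{T}^{-1}$ (using $\mcl{T}$-invariance of $(\vphi_0,\Phi_0)$), and the equivalence, with constants $1+O(\sigma)$, of the weighted H\"older norms on $\N_0$ and on $\N_G$ under the $C^{2,\alp}$ map $\mcl{T}$, which sends $\Gam_0\cup\Gam_L$ to $\mcl{T}(\Gam_0)\cup\mcl{T}(\Gam_L)$.

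The main obstacle is precisely the stability of the structural identity Lemma \ref{lemma-3-1}(c): one must check, by explicit computation of the transformed divergence-form coefficients, that the antisymmetric cancellation underlying \eqref{3-c6}--\eqref{3-c7} degrades only by terms proportional to $\|D_\rx\mcl{T}-I_n\|$, so that it can be restored by taking $\sigma$ small --- if it degraded by an $O(1)$ amount, the whole $H^1$-based scheme would collapse. A secondary, bookkeeping-type difficulty is controlling the weighted H\"older norms near the corner edges $\ol{\Gam_0}\cap\ol{\Gam_w}$ and $\ol{\Gam_L}\cap\ol{\Gam_w}$: the hypothesis \eqref{app-a4} that $D_\rx\mcl{T}=I_n$ on $\Gam_0\cup\Gam_L$ is exactly what keeps $\mcl{T}$ from distorting the local dihedral geometry there, so that the corner regularity exponents, and hence the weight exponent $-1-\alp$, are unchanged.
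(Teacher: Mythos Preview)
Your approach is correct but differs from the paper's in one organizational point that is worth noting. After pulling back to $\N_0$ via $\mcl{T}$, the paper does \emph{not} perturb the linear operators $L_1,L_2$. Instead it adds and subtracts $\mcl{A}_1(\rx,W,\nabla\phi,I_n)$ and $\mcl{A}_2(\rx,\nabla W,I_n)$ so that the left-hand sides become exactly the unperturbed nonlinear operators of \eqref{nbvp-1}, and all effects of $J_{\mcl{T}}\neq I_n$ are pushed to the right-hand side as extra fluxes ${\bf H}_1,{\bf H}_2$ and to the wall as extra conormal data $\mathfrak{g}_1,\mathfrak{g}_2$, plus a correction $\mathfrak{g}_3$ to the exit pressure condition, each of size $C\sigma$. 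The hypothesis \eqref{app-a4} is then used specifically to get ${\bf H}_1={\bf H}_2={\bf 0}$ on $\Gam_0\cup\Gam_L$, which is exactly the compatibility condition \eqref{F-condition} needed for the odd-reflection step in Lemma~\ref{lemma-3-6}. With this arrangement the linearization reproduces $L_1,L_2$ verbatim, Lemma~\ref{lemma-3-1}(c) applies without any degradation, and Proposition~\ref{proposition-3-1} is invoked as a black box (modulo allowing inhomogeneous conormal data on $\Gam_w$, a routine extension). Your route---keeping the transformed coefficients on the left and absorbing an $O(\sigma)$ defect in coercivity---also works, but in fact the obstacle you flag does not arise: a direct computation shows that for the transformed fluxes $\mcl{A}_1(z,\q,M)=\tilde\rho(z,\q)\frac{M^TM}{\det M}\q$ and transformed source $\frac{\tilde\rho(z,\q)-b}{\det M}$ with $\tilde\rho(z,\q)=h^{-1}(z-\tfrac12|M\q|^2)$, one has $\partial_z(\mcl{A}_1)_i+\partial_{q_i}(\text{source})=0$ identically, so the structural cancellation of Lemma~\ref{lemma-3-1}(c) survives \emph{exactly} under the change of variables, not merely up to $O(\sigma)$. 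The trade-off is that the paper's rearrangement keeps the coercivity argument untouched at the cost of inhomogeneous wall conditions, whereas yours keeps the wall conditions homogeneous at the cost of re-checking the linear theory with variable principal coefficients; both are minor.
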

Suppose that $(\vphi,\Phi)$ is a solutions to \eqref{nbvp-1-app}--\eqref{nbvp-4-app}, and let us set
\begin{equation*}
\phi(\rx):=\vphi\circ \mcl{T}(\rx),\quad W(\rx):=\Phi\circ \mcl{T}(\rx)\quad\tx{for}\quad \rx\in\N_0.
\end{equation*}
Then straightforward computation shows that $(\phi, W)$ satisfy
\begin{equation}
\label{app-a6}
\begin{split}
&\begin{cases}
\Div_{\rx} {\bf \mcl{A}}_1(\rx,W,\nabla\phi,J_{\mcl T})=0\\
\Div_{\rx}{\bf \mcl{A}}_2(\rx,\nabla W,J_{\mcl T})=\frac{1}{\det J_{\mcl T}}(\rho(W,|J_{\mcl T}\nabla\phi|^2)-b)
\end{cases}\quad\tx{in}\quad\N_0\\
&{\bf \mcl{A}}_1(\rx,W,\nabla\phi,J_{\mcl T})\cdot{\bf n}_w=
{\bf \mcl{A}}_2(\rx,\nabla W,J_{\mcl T})\cdot{\bf n}_w=0\quad\tx{on}\quad \Gam_w\\
&p(\rho(W,|J_{\mcl T}\nabla\phi|^2))=\pex\quad\tx{on}\quad \Gam_L
\end{split}
\end{equation}
for $J_{\mcl{T}}, {\bf{\mcl A}}_1$ and ${\bf{\mcl A}}_2$ defined by
\begin{equation*}
\begin{split}
&J_{\mcl{T}}(\rx):=\Bigl(\frac{\der(\mcl{T}^{-1})_j}{\der y_i}\Big|_{\ry=\mcl{T}(\rx)}\Bigr)_{i,j=1}^n,\\
&\mcl{A}_1(\rx,z,\q_1,M):=\bigl(\rho(z,|M\q_1^T|^2)\frac{M^TM}{\det M}\q_1^T\bigr)^T,\quad \mcl{A}_2(\rx,\q_2,M):=\bigl(\frac{M^TM\q_2^T}{\det M}\bigr)^T
\end{split}
\end{equation*}
for $(\rx,z,\q_1,\q_2,M)\in \N_0\times \R\times \R^n\times \R^n\times GL_n$. Here $\Div_{\rx}$ denotes $(\der_{x_1},\cdots, \der_{x_n})\cdot$, and $\q_1$ and $\q_2$ denote row vectors. And, ${\bf n}_w$ is inward unit normal vector on $\Gam_w$ in \eqref{app-a6}. We rewrite \eqref{app-a6} as
\begin{equation}
\label{app-a7}
\begin{split}
&\begin{cases}
\Div_{\rx} {\bf \mcl{A}}_1(\rx,W,\nabla\phi,I_n)=\Div_{\rx}(\underset{(=:{\bf H}_1)}{\underbrace{{\bf \mcl{A}}_1(\rx,W,\nabla\phi,I_n)-{\bf \mcl{A}}_1(\rx,W,\nabla\phi,J_{\mcl{T}})}})\\
\Div_{\rx}{\bf \mcl{A}}_2(\rx,\nabla W,I_n)=\frac{\rho(W,|J_{\mcl{T}}\nabla\phi|^2)-b}{\det J_{\mcl{T}}}+
\Div_{\rx}(\underset{(=:{\bf H}_2)}{\underbrace{{\bf \mcl{A}}_2(\rx,\nabla W,I_n)-{\bf \mcl{A}}_2(\rx,\nabla W,J_{\mcl{T}})}})\\
\end{cases}\tx{in}\quad\N_0\\
&\begin{cases}
{\bf \mcl{A}}_1(\rx,W,\nabla\phi,I_n)\cdot{\bf n}_w=({\bf \mcl{A}}_1(\rx,W,\nabla\phi,I_n)-{\bf \mcl{A}}_1(\rx,W,\nabla\phi,J_{\mcl{T}}))\cdot{\bf n}_w=:\mathfrak{g}_1\cdot{\bf n}_w\\
{\bf \mcl{A}}_2(\rx,\nabla W,J_{\mcl{T}})\cdot{\bf n}_w=({\bf \mcl{A}}_2(\rx,\nabla W,I_n)-{\bf \mcl{A}}_2(\rx,\nabla W,J_{\mcl{T}}))\cdot{\bf n}_w=:\mathfrak{g}_2\cdot{\bf n}_w
\end{cases}\quad\tx{on}\quad \Gam_w\\
&p(\rho(W,|\nabla\phi|^2))=\pex+
\Bigl( p(\rho(W,|\nabla\phi|^2))-p(\rho(W,|J_{\mcl{T}}\nabla\phi|^2))
\Bigr)
=:\pex+\mathfrak{g}_3\quad\tx{on}\quad \Gam_L
\end{split}
\end{equation}
so that the left-hand sides of equations in \eqref{app-a7} are same as the left-hand sides of \eqref{nbvp-1}, and the boundary conditions in \eqref{app-a7} can be rewritten as conormal boundary conditions. Also, if $(\phi, W)$ are sufficiently close to $(\vphi_0,\Phi_0)$ in $C^{2,\alp}_{(-1-\alp,\corners)}(\N_0)$, then, by \eqref{app-a2}, there exists a constant $C$ depending only on the data and $\alp$ so that ${\bf H}_1, {\bf H}_2, \mathfrak{g}_1, \mathfrak{g}_2$ and $\mathfrak{g}_3$ satisfy
\begin{equation}
\label{app-b1}
\begin{split}
&\sum_{l=1}^2\|{\bf H}_l\|_{1,\alp,\N_0}^{(-\alp,\corners)}+\sum_{k=1}^3\|\mathfrak{g}_k\|_{1,\alp,\N_0}^{(-\alp,\corners )}
\le C\|I_n-J_{\mcl{T}}\|_{1,\alp,\N_0}\le C\sigma.
\end{split}
\end{equation}
The condition \eqref{app-a4} implies that $J_{\mcl{T}}=I_n$ on $\Gam_0\cup\Gam_L$ so we have
${\bf H}_1={\bf 0}={\bf H}_2\quad\tx{on}\;\;\Gam_0\cup \Gam_L$, then ${\bf H}_1$ and ${\bf H}_2$ satisfy the compatibility condition \eqref{F-condition} on $\Gam_0\cup\Gam_L$.
Now we can repeat the argument in $\S$\ref{section-3} and $\S$\ref{section4} to prove Theorem \ref{theorem-domain-pert}.

\bigskip

{\bf Acknowledgement.}
The research of Myoungjean Bae was supported in part by Priority Research Centers Program under Grant 2012047640  and by the Basic Science Research Program under Grant 2012R1A1A1001919 through the National Research Foundation of Korea(NRF) funded by the Ministry of Education, Science and Technology.
The research of Ben Duan was supported in part by Priority Research Centers Program under Grant 2012047640. The research of Chunjing Xie was supported in part by NSFC 11241001, 11201297 and Shanghai Pujiang Talent Program. Part of the work was done when Chunjing Xie was visiting POSTECH, and when Myoungjean Bae was visiting Shanghai Jiao Tong University and National Center for Theoretical Sciences(Taiwan), and when Ben Duan was visiting Shanghai Jiao Tong University. They thank these institutes for the warm hospitality and support during these visits.

\bigskip

\end{document}